\newtheorem{defn}{Definition}
\newtheorem{thm}{Theorem}[section]
\newtheorem{prop}[thm]{Proposition}
\newtheorem{lemme}[thm]{Lemma}
\newtheorem{remark}[defn]{Remark}
\newtheorem{question}[thm]{Question}
\newtheorem{conj}[thm]{Conjecture}
\DeclareMathOperator{\Id}{Id}
\DeclareMathOperator{\Pic}{Pic}
\DeclareMathOperator{\Ricci}{Ric}
\DeclareMathOperator{\rank}{rank}
\DeclareMathOperator{\Hom}{Hom}
\DeclareMathOperator{\curv}{R}
\DeclareMathOperator{\nd}{nd}
\newcommand{\holom}[3]{\ensuremath{#1\colon #2  \rightarrow #3}}
\newcommand{\codim}{{\rm codim}}
\newcommand{\N}{\ensuremath{\mathbb{N}}}
\newcommand{\PP}{\ensuremath{\mathbb{P}}}
\newcommand\sE{{\mathcal E}}
\newcommand\sF{{\mathcal F}}
\newcommand\sG{{\mathcal G}}
\newcommand\bZ{{\mathbb Z}}
\newcommand\bQ{{\mathbb Q}}
\title[Compact K\"ahler manifolds with nef anticanonical bundles]{A remark on compact K\"ahler manifolds with nef anticanonical bundles and its applications }
\author{Junyan Cao}
\email{junyan@kias.re.kr ; jycao136@yahoo.com}
\address{KIAS\\
85 Hoegiro, Dongdaemun-gu\\
Seoul 130-722\\
Republic of Korea}
\begin{document}

\begin{abstract}
Let $(X, \omega_X)$ be a compact K\"ahler manifold such that the anticanonical bundle $− K_X$ is nef. 
We prove that the slopes of the Harder-Narasimhan filtration of the tangent bundle with respect to a polarization of the form 
$(\omega_X )^{n-1}$ are semi-positive. 
As an application, we give a characterization of rationally connected compact K\"ahler manifolds with nef anticanonical bundles. 
As another application, we give a simple proof of the surjectivity of the Albanese map.  
\end{abstract}

\maketitle

\section{Introduction}

Compact Kähler manifolds with semi-positive anticanonical bundles have been studied in depth in \cite{CDP12},
where a rather general structure theorem for this type of manifolds has been obtained.
It is a natural question to find some similar structure theorems 
for compact Kähler manifolds with nef anticanonical bundles.
Obviously, we cannot hope the same structure theorem for this type of manifolds (cf. \cite[Remark 1.7]{CDP12}).
It is conjectured that the Albanese map is a submersion 
and that the fibers exhibit no variation of their complex structure.   

In relation with the structure of compact Kähler manifolds with nef anticanonical bundles,
it is conjectured in \cite[Conj. 1.3]{Pet}
that the tangent bundles of projective manifolds with nef anticanonical bundles are generically nef.
We first recall the notion of generically semi-positive (resp. strictly positive) (cf. \cite[Section 6]{Mi87})

\begin{defn}
Let $X$ be a compact Kähler manifold and let $E$ be a vector bundle on $X$.
Let $\omega_1,\cdots,\omega_{n-1}$ be Kähler classes.
Let 
$$0\subset \mathcal{E}_{0}\subset \mathcal{E}_{1}\subset\cdots\subset \mathcal{E}_{s}=E \qquad (\text{resp. } \Omega_{X}^{1})$$
be the Harder-Narasimhan semi-stable filtration with respect to $(\omega_1,\cdots,\omega_{n-1})$.
We say that $E$ is generically $(\omega_1,\cdots,\omega_{n-1})$-semi-positive (resp. strictly positive),
if 
$$\int_{X}c_{1}(\mathcal{E}_{i+1}/\mathcal{E}_{i})\wedge\omega_1\wedge\cdots\wedge\omega_{n-1}\geq 0 
\qquad (\text{ resp.} > 0) \qquad\text{for all } i. $$
If $\omega_1=\cdots=\omega_{n-1}$, we write the polarization as $\omega_1 ^{n-1}$ for simplicity.
\end{defn}

We rephrase \cite[Conj. 1.3]{Pet} as follows

\begin{conj}
Let $X$ be a projective manifold with nef anticanonical bundle.
Then $T_X$ is generically $(H_1,\cdots,H_{n-1})$-semi-positive
for any $(n-1)$-tuple of ample divisors $H_{1},\cdots,H_{n-1}$. 
\end{conj}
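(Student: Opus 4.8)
The plan is to prove the equivalent dual assertion $\mu_{\max}(\Omega^1_X)\le 0$ for the given polarization, combining the Harder--Narasimhan formalism with the metric regularizations supplied by the nefness of $-K_X$. Generic $(H_1,\dots,H_{n-1})$-semi-positivity of $T_X$ is equivalent to $\mu_{\min}(T_X)\ge 0$; dualizing a minimal-slope torsion-free quotient of $T_X$ (which exists as soon as $\mu_{\min}(T_X)<0$), this is in turn equivalent to the statement that every saturated coherent subsheaf $\mathcal S\subset\Omega^1_X$ satisfies $c_1(\mathcal S)\cdot H_1\cdots H_{n-1}\le 0$. I will establish this for polarizations of the form $\omega^{n-1}$ with $\omega$ a K\"ahler class --- the natural setting for the analysis below; reducing the general tuple $(H_1,\dots,H_{n-1})$ to this case is, I expect, the main difficulty, and I return to it at the end.

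So suppose, for contradiction, that there is a saturated subsheaf $\mathcal S\subset\Omega^1_X$ with $d:=c_1(\mathcal S)\cdot\omega^{n-1}>0$ (for instance the first step of the Harder--Narasimhan filtration of $\Omega^1_X$ when $\mu_{\max}>0$). Its rank $p$ satisfies $1\le p<n$, since a full-rank saturated subsheaf would be all of $\Omega^1_X$, whose slope is $\tfrac1n\,c_1(K_X)\cdot\omega^{n-1}\le 0$ because $-K_X$ is nef. Taking $p$-th exterior powers of the inclusion and passing to reflexive hulls produces a line bundle $L:=(\det\mathcal S)^{**}$ with $c_1(L)\cdot\omega^{n-1}=d>0$ together with a nonzero morphism $L\to\Omega^p_X$, that is, a nonzero holomorphic section $s\in H^0(X,E)$ of the holomorphic vector bundle $E:=\Omega^p_X\otimes L^{-1}$.

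The heart of the proof is to contradict $s\neq 0$. For each $\varepsilon>0$ I would construct a K\"ahler metric $\omega_\varepsilon$ in the fixed class $[\omega]$ with $\Ricci(\omega_\varepsilon)\ge -\varepsilon\,\omega_\varepsilon$: since $-K_X$ is nef, $c_1(X)+\varepsilon[\omega]$ is K\"ahler; choosing a positive representative $\beta_\varepsilon$ of it, a function $G_\varepsilon$ with $i\partial\bar\partial G_\varepsilon=\Ricci(\omega)+\varepsilon\omega-\beta_\varepsilon$ (possible, the right-hand side being $i\partial\bar\partial$-exact), and solving the complex Monge--Amp\`ere equation $(\omega+i\partial\bar\partial\psi_\varepsilon)^n=e^{G_\varepsilon+\varepsilon\psi_\varepsilon}\,\omega^n$ --- an equation of the type treated by Aubin and Yau, the coefficient of $\psi_\varepsilon$ being positive --- gives $\omega_\varepsilon:=\omega+i\partial\bar\partial\psi_\varepsilon$ with $\Ricci(\omega_\varepsilon)=\beta_\varepsilon-\varepsilon\omega_\varepsilon\ge -\varepsilon\omega_\varepsilon$. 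Endow $\Omega^p_X$ with the metric induced by $\omega_\varepsilon$, and $L^{-1}$ with a metric whose curvature satisfies $\operatorname{tr}_{\omega_\varepsilon}(i\Theta_{L^{-1}})\equiv -\delta$, a constant $\delta>0$ depending only on $d$ and $[\omega]$ and hence independent of $\varepsilon$ (obtained by correcting any metric by $i\partial\bar\partial$ of the solution of a Laplace equation on $(X,\omega_\varepsilon)$, using that $[\omega_\varepsilon]=[\omega]$ so the relevant mean value equals $-n\,c_1(L)\cdot[\omega]^{n-1}/[\omega]^n<0$). Now the Bochner--Kodaira identity $i\partial\bar\partial|s|^2=i\langle\nabla^{1,0}s,\nabla^{1,0}s\rangle-\langle i\Theta_E\,s,s\rangle$, wedged with $\omega_\varepsilon^{n-1}$ and integrated over $X$ (the left-hand side integrating to $0$ and the gradient term being semipositive), yields $\int_X\langle\operatorname{tr}_{\omega_\varepsilon}(i\Theta_E)\,s,s\rangle\,\omega_\varepsilon^n\ge 0$. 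But $\Ricci(\omega_\varepsilon)\ge-\varepsilon\omega_\varepsilon$ forces the eigenvalues of $\operatorname{tr}_{\omega_\varepsilon}(i\Theta_{\Omega^1_X})$ to be $\le\varepsilon$, hence those of $\operatorname{tr}_{\omega_\varepsilon}(i\Theta_{\Omega^p_X})=\operatorname{tr}_{\omega_\varepsilon}(i\Theta_{\wedge^p\Omega^1_X})$ to be $\le p\varepsilon$, so that $\operatorname{tr}_{\omega_\varepsilon}(i\Theta_E)=\operatorname{tr}_{\omega_\varepsilon}(i\Theta_{\Omega^p_X})\otimes\operatorname{id}-\delta\,\operatorname{id}\le(p\varepsilon-\delta)\operatorname{id}$. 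Therefore $0\le(p\varepsilon-\delta)\int_X|s|^2\,\omega_\varepsilon^n$, and since $\int_X|s|^2\,\omega_\varepsilon^n>0$ we obtain $\varepsilon\ge\delta/p$ for every $\varepsilon>0$ --- a contradiction. Hence $\mu_{\max}(\Omega^1_X)\le 0$ for the polarization $\omega^{n-1}$.

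Finally, the passage from a single K\"ahler class to an arbitrary tuple $(H_1,\dots,H_{n-1})$ of ample divisors is where I expect the real obstacle to lie. The analytic step above is tuned to the polarization $\omega^{n-1}$: it is precisely $\operatorname{tr}_{\omega_\varepsilon}$ of the curvature --- wedging with $\omega_\varepsilon^{n-1}$ --- that the Ricci lower bound controls, and there is no obvious substitute when one must wedge with a representative of $H_1\cdots H_{n-1}$ instead. One possible route is cohomological: having obtained $c_1(\mathcal S)\cdot\omega^{n-1}\le 0$ for \emph{every} K\"ahler class $\omega$ and every saturated $\mathcal S\subset\Omega^1_X$, one would like $H_1\cdots H_{n-1}$ to lie in the closed convex cone generated by $\{\omega^{n-1}:\omega\text{ K\"ahler}\}$; but these two subcones of the space of curve classes need not coincide in general, so this step requires a genuine argument (and might fail). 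An alternative would be to use a Hermitian--Yang--Mills type metric adapted to $(H_1,\dots,H_{n-1})$, whose existence and regularity is itself delicate, or to deform the tuple to a single power while controlling the (possibly jumping) Harder--Narasimhan filtration. I would therefore treat the $\omega^{n-1}$ statement --- which already suffices for the applications to rational connectedness and to the surjectivity of the Albanese map mentioned in the abstract --- as the essential content, and regard the fully general tuple as the remaining open point.
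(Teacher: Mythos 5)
The statement you were asked to prove is stated in the paper only as a \emph{conjecture} (Peternell's), and the paper does not prove it either: its actual main result (Theorem \ref{mainthm}) is precisely the partial case you establish, namely generic semi-positivity for polarizations of the form $\omega^{n-1}$ with $\omega$ an arbitrary K\"ahler class. Your closing assessment is therefore exactly right: the reduction from a general tuple $(H_1,\dots,H_{n-1})$ to powers of a single K\"ahler class is a genuine obstruction --- the Ricci lower bound only controls the contraction of the curvature against $\omega_\epsilon^{n-1}$, and there is no known way to place $H_1\cdots H_{n-1}$ in the closed cone generated by $\{\omega^{n-1}\}$ --- and the paper leaves this open as well. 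So there is no ``gap'' in your proposal beyond the one that is open for everybody.

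For the $\omega^{n-1}$ case your argument is correct but takes a genuinely different route. The paper works directly with the Harder--Narasimhan filtration $0\subset\mathcal{E}_1\subset\cdots\subset\mathcal{E}_s=T_X$: it constructs $\omega_\epsilon$ with $\Ricci_{\omega_\epsilon}\ge-\epsilon\,\omega_\epsilon$ exactly as you do (following \cite{DPS93}), then endows each quotient $T_X/\mathcal{E}_i$ with the induced metric (curvature increases in quotients), integrates $c_1$ against $\omega_\epsilon^{n-1}$ and lets $\epsilon\to 0$; since the $\mathcal{E}_i$ are only coherent, it must invoke a regularization lemma (blowing up so that the filtration becomes one by subbundles) and compare slopes upstairs and downstairs. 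You instead dualize to a saturated $\mathcal{S}\subset\Omega^1_X$ of positive degree, pass to the line bundle $L=(\det\mathcal{S})^{**}$ and a nonzero section of $\Omega^p_X\otimes L^{-1}$, and kill it by the Bochner identity against a metric on $L^{-1}$ with constant (negative) trace of curvature. This is closer in spirit to the paper's appendix and to its Albanese argument; it has the advantage of avoiding the regularization/blow-up step entirely, since everything happens on honest vector bundles on $X$, at the cost of yielding only $\mu_{\min}(T_X)\ge 0$ rather than the semi-positivity of each Harder--Narasimhan quotient --- which is of course equivalent, the slopes being decreasing. Both arguments work for arbitrary compact K\"ahler $X$, not just projective $X$.
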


In this article, we first give a partial positive answer to this conjecture.
More precisely, we prove

\begin{thm}\label{mainthm}
Let $X$ be a compact Kähler manifold with nef anticanonical bundle (resp. nef canonical bundle).
Then $T_X$ (resp. $\Omega_X ^{1}$) is generically $\omega_X ^{n-1}$-semi-positive for any Kähler class $\omega_X$.
\end{thm}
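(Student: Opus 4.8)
The plan is to argue by contradiction, following the circle of ideas around destabilizing subsheaves and the Bogomolov-type constructions that detect negativity. Suppose $-K_X$ is nef but $T_X$ is not generically $\omega_X^{n-1}$-semi-positive. Then in the Harder--Narasimhan filtration $0\subset \sE_0\subset\cdots\subset\sE_s=T_X$ there is a piece with negative slope, and hence the maximal destabilizing subsheaf (or more precisely, the last piece $\sE_s/\sE_{s-1}$ with respect to $\omega_X^{n-1}$) has negative degree against $\omega_X^{n-1}$. Equivalently, there is a quotient sheaf $T_X\twoheadrightarrow \sQ$ with $\int_X c_1(\sQ)\wedge\omega_X^{n-1}<0$; dualizing, one gets a subsheaf $\sF\subset\Omega_X^1$ with $\mu_{\omega_X}(\sF)>0$. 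The subsheaf $\sF$ is saturated, so $\Omega_X^1/\sF$ is torsion-free and $\det\sF$ is a line bundle (away from a codimension-$2$ set) with positive degree.

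The key step is to produce, from such an $\sF$, a pseudo-effective line bundle whose intersection with $-K_X$ contradicts nefness. First I would pass to the determinant: $\det\sF=\sO_X(D)$ for some $\bQ$-divisor class with $D\cdot\omega_X^{n-1}>0$. One then wants to invoke the fact that a subsheaf of $\Omega_X^{\otimes m}$ (here $\det\sF$ sits inside $\bigwedge^p\Omega_X^1$ with $p=\rank\sF$) with positive slope, on a manifold with $-K_X$ nef, leads to a contradiction with the generic semi-positivity statement one is trying to prove — so this has to be done directly rather than circularly. The natural mechanism is to combine (i) the fact that $-K_X$ nef implies that for every $\ls>0$ the class $-K_X+\ls\omega_X$ is Kähler, hence carries a Kähler metric, and (ii) a Bochner--Kodaira or $\bar\partial$-type vanishing for the associated twisted Hodge bundle, to show that a positive-slope subsheaf of $\Omega_X^1$ would force sections that cannot exist. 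Concretely I would look at the foliation or the Harder--Narasimhan piece $\sF$, take the reflexive hull, and use the nefness of $-K_X$ to run a semistability-reduction argument à la Miyaoka: after restricting to a general complete intersection curve $C=H_1\cap\cdots\cap H_{n-1}$ cut out by large multiples of $\omega_X$, the restriction $\Omega_X^1|_C$ would have a positive-degree subbundle, while $-K_X|_C$ is nef; then $T_X|_C$ has a quotient of negative degree, and one uses that such a curve through a general point, together with the deformation theory of $C$ (its normal bundle is globally generated since we can take $C$ very general in a very ample linear system), forces $T_X$ restricted to a covering family of curves to be nef — the standard argument that a vector bundle which is not generically nef is contradicted by a movable curve class sitting in the right cone.

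The main obstacle — and the place where the nefness (as opposed to semi-positivity) of $-K_X$ really bites — is that $-K_X$ is only a limit of Kähler classes, so one does not get an honest smooth semi-positive metric with which to run Bochner. I expect the real work to be an approximation/regularization argument: replace $-K_X$ by $-K_X+\ls\omega_X$, obtain for each $\ls$ a genuine Kähler class, derive the slope inequality with an error term $O(\ls)$, and then let $\ls\to0$. This requires that the relevant positivity of the Harder--Narasimhan slopes be an intersection-theoretic (hence continuous in $\ls$) quantity, which it is, since slopes are computed by $\int_X c_1(\cdot)\wedge\omega_X^{n-1}$ and $\omega_X$ is fixed; the subtlety is controlling the destabilizing subsheaf uniformly, because the Harder--Narasimhan filtration with respect to $(-K_X+\ls\omega_X)$-polarizations could jump. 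I would handle this by noting that we have \emph{fixed} the polarization to be $\omega_X^{n-1}$ throughout — only the positivity hypothesis involves $-K_X$ — so the filtration does not move, and what must be made precise is simply that a $\mu_{\omega_X}$-destabilizing quotient of $T_X$ is incompatible with $-K_X$ lying in the closure of the Kähler cone, via restriction to movable curves $\omega_X^{n-1}$ and the nef-value estimate $c_1(T_X)\cdot\omega_X^{n-1}=(-K_X)\cdot\omega_X^{n-1}\ge 0$ applied not just to $T_X$ but, after the standard tensor/symmetric-power trick, to the destabilizing subobject. The dual statement for $\Omega_X^1$ when $K_X$ is nef is then formally identical after replacing $T_X$ by $\Omega_X^1$ and reversing the inequalities.
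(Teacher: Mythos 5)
Your proposal does not contain the idea that actually makes this theorem work, and the substitute arguments you sketch either presuppose hypotheses the theorem does not have or are circular. The paper's proof is short and direct: since $-K_X$ is nef, Yau's solution of the Monge--Amp\`ere equation produces, for every $\epsilon>0$, a K\"ahler metric $\omega_\epsilon$ \emph{in the fixed class} $[\omega_X]$ with $\Ricci_{\omega_\epsilon}\geq-\epsilon\,\omega_\epsilon$. By the identity $\langle\frac{i\Theta_{\omega_\epsilon}(T_X)\wedge\omega_\epsilon^{n-1}}{\omega_\epsilon^{n}}u,u\rangle=\Ricci_{\omega_\epsilon}(u,\bar u)$ and the fact that curvature increases when passing to quotient bundles, every quotient $T_X/\mathcal{E}_i$ then satisfies $\int_X c_1(T_X/\mathcal{E}_i)\wedge\omega_X^{n-1}\geq-C\epsilon$, and one lets $\epsilon\to0$ (a blow-up via the regularization lemma handles non-locally-free filtrations). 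Your text never invokes this: you propose instead to use ``a K\"ahler metric in the class $-K_X+\epsilon\omega_X$'', but such a metric carries no information about the curvature of $T_X$ computed against $\omega_X^{n-1}$; the whole point is that nefness of $-K_X$ controls the \emph{Ricci curvature of a metric in the class $\omega_X$}, which is exactly what the slope $\int_X c_1(\cdot)\wedge\omega_X^{n-1}$ sees.

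The alternative route you lean on --- restriction to a general complete intersection curve $C=H_1\cap\cdots\cap H_{n-1}$ and a Mehta--Ramanathan/Miyaoka argument --- fails here for two independent reasons. First, the theorem is stated for an arbitrary compact K\"ahler manifold and an arbitrary (typically irrational) K\"ahler class $\omega_X$, so there are no very ample divisors cutting out such a curve; this is precisely why the paper develops the perturbation machinery of Propositions \ref{addstable} and \ref{lemmastable} instead. Second, even in the projective rational case, the assertion that ``$T_X|_C$ with a negative-degree quotient contradicts nefness of $-K_X|_C$ by the standard argument'' is not a standard argument: generic semi-positivity of $T_X$ under $-K_X$ nef for general polarizations is exactly the open Conjecture of Peternell recalled in the introduction, and Miyaoka's theorem goes in the other direction (an ample subsheaf of $T_X|_C$ forces uniruledness; it does not produce semi-positivity from nefness of the determinant). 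Your closing ``tensor/symmetric-power trick'' applied to $c_1(T_X)\cdot\omega_X^{n-1}\geq0$ likewise yields only multiples of the same inequality and says nothing about the individual Harder--Narasimhan slopes. In short, the contradiction you aim for is never actually derived, and the one tool that closes the argument --- the Calabi--Yau theorem in the class $[\omega_X]$ together with curvature monotonicity for quotients --- is absent.
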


\begin{remark}
We have been informed that I. Enoki proved in \cite[Thm 1.4]{Eno} for the nef canonical bundle case by using the same method.
Although the proof for the nef anticanonical bundle case is almost the same, we still give the proof here for the convenience of readers.
\end{remark}

\begin{remark}
If $X$ is projective and $K_X$ is nef, Theorem \ref{mainthm} is a special case of \cite[Cor. 6.4]{Mi87}.
Here we prove it for arbitrary compact Kähler manifolds with nef canonical bundles. 
If $-K_X$ is nef, Theorem \ref{mainthm} is a new result even for algebraic manifolds.
\end{remark}

As an application, we give a characterization of rationally connected 
compact Kähler manifolds with nef anticanonical bundles.

\begin{prop}\label{introprop}
Let $X$ be a compact Kähler manifold with nef anticanonical bundle.
Then the following four conditions are equivalent

{\em (i):} $H^{0}(X, (T_{X}^{*})^{\otimes m})=0\qquad\text{for all }m\geq 1 .$

{\em (ii):} $X$ is rationally connected.

{\em (iii):} $T_X$ is generically $\omega_X ^{n-1}$-strictly positive for some Kähler class $\omega_X$.

{\em (iv):} $T_X$ is generically $\omega_X ^{n-1}$-strictly positive for any Kähler class $\omega_X$.
\end{prop}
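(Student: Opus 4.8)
The plan is to prove the implications $(iv)\Rightarrow(iii)\Rightarrow(i)$, the equivalence $(i)\Leftrightarrow(ii)$, and $(i)\Rightarrow(iv)$, with Theorem~\ref{mainthm} entering decisively in the last two steps. The implication $(iv)\Rightarrow(iii)$ is trivial. For $(iii)\Rightarrow(i)$: generic $\omega_X^{n-1}$-strict positivity of $T_X$ means exactly that $\mu_{\min}(T_X)>0$ for this polarization, i.e. $\mu_{\max}(\Omega_X^1)<0$; since a tensor product of $\omega_X^{n-1}$-semistable sheaves is again semistable (we work in characteristic zero), $\mu_{\max}\big((\Omega_X^1)^{\otimes m}\big)=m\,\mu_{\max}(\Omega_X^1)<0$ for all $m\ge1$, so a nonzero map $\sO_X\to(\Omega_X^1)^{\otimes m}$ — which would exhibit a subsheaf of slope $0$ — cannot exist, giving $(i)$. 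The implication $(ii)\Rightarrow(i)$ is the classical fact that a rationally connected manifold carries no global tensor of $\Omega_X^1$ (this does not use the hypothesis on $-K_X$): through a general point there is a very free rational curve $C\cong\PP^1$ with $T_X|_C$ ample, hence $(\Omega_X^1)^{\otimes m}|_C$ is a sum of negative line bundles and any section of $(\Omega_X^1)^{\otimes m}$ must vanish along the covering family $\{C\}$, so vanishes identically.

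For $(i)\Rightarrow(ii)$ I would argue by contraposition, through the MRC (rational quotient) fibration $f\colon X\dashrightarrow Y$, which exists in the compact Kähler category. If $X$ is not rationally connected, then $\dim Y\ge1$ and, after a modification $\mu\colon X'\to X$ making $f'=f\circ\mu\colon X'\to Y$ a morphism, $Y$ is a compact Kähler manifold which is not uniruled, hence $K_Y$ is pseudo-effective (the Kähler analogue of Boucksom--Demailly--P\u{a}un--Peternell). Over the locus where $f'$ is a submersion, $f'^{*}(-K_Y)$ is the determinant of a quotient of $T_{X'}$; combining Theorem~\ref{mainthm} for $X$ with positivity-of-direct-image arguments for the rationally connected fibration $f$ (to control the passage through the modification $\mu$ and the degeneration locus of $f'$), one obtains $f'^{*}K_Y\cdot(\mu^{*}\omega_X)^{n-1}\le0$, hence $=0$, for every Kähler class $\omega_X$ on $X$. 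A pseudo-effective class with zero intersection against $(\mu^{*}\omega_X)^{n-1}$ for all Kähler $\omega_X$ must vanish in $H^2$, so $K_Y\equiv0$. By the Beauville--Bogomolov decomposition for compact Kähler manifolds with $c_1=0$, a finite étale cover of $Y$ is a product of tori, Calabi--Yau and hyperkähler factors; in particular $mK_Y\sim\sO_Y$ for some $m\ge1$, giving a nonzero section of $(\Omega_Y^{\dim Y})^{\otimes m}$, hence of $(\Omega_Y^1)^{\otimes m\dim Y}$. Pulling this back by $f'$ and using that $H^0\big((\Omega^1)^{\otimes k}\big)$ is a birational invariant among compact Kähler manifolds, we obtain a nonzero section of $(\Omega_X^1)^{\otimes m\dim Y}$, contradicting $(i)$; hence $X$ is rationally connected.

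Finally, $(i)\Rightarrow(iv)$ has the same flavour. By Theorem~\ref{mainthm} the Harder--Narasimhan filtration of $T_X$ for $\omega_X^{n-1}$ has all slopes $\ge0$; if strict positivity fails, the bottom graded piece $Q=T_X/\sE_{s-1}$ is $\omega_X$-semistable of slope $0$, so $\mathcal F:=Q^{\vee}\subset\Omega_X^1$ is a saturated $\omega_X$-semistable subsheaf of slope $0$. Its Frobenius integrability obstruction lands in $\Omega_X^1\otimes(\Omega_X^1/\mathcal F)$, whose $\mu_{\max}$ is negative, so the obstruction vanishes and $\mathcal F$ is the conormal sheaf of a holomorphic foliation; arguing as in the MRC step, the nefness of $-K_X$ forces $c_1(\det\mathcal F)\equiv0$, and then passing to a finite cover trivialising $\det\mathcal F$ yields a nonzero section of a tensor power of $\Omega_X^1$, again contradicting $(i)$. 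The main obstacle, shared by $(i)\Rightarrow(ii)$ and $(i)\Rightarrow(iv)$, is exactly the passage from the hypothesis that $-K_X$ is merely \emph{nef} — not semi-positive — to the genuine \emph{numerical triviality} of $K_Y$, respectively of $\det\mathcal F$: Theorem~\ref{mainthm} only provides non-negativity of the relevant intersection numbers against one polarization at a time, and upgrading this to a cohomological statement requires the singular Hermitian metric and positivity-of-direct-image techniques of the \cite{CDP12} circle, together with a careful analysis of the loci where the fibration, respectively the foliation, degenerates.
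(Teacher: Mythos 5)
There is a genuine gap, and you have in fact pointed at it yourself: the step that upgrades ``the bottom Harder--Narasimhan quotient $Q=T_X/\sE_{s-1}$ has slope zero'' to ``$c_1(Q)=0$ in $H^2(X,\bR)$'' is the whole content of $(i)\Rightarrow(iv)$, and you leave it to unspecified ``singular Hermitian metric and positivity-of-direct-image techniques,'' which is not an argument. The paper closes this gap with a short and completely elementary device that your proposal is missing: for any $\alpha\in H^{1,1}(X,\bR)$ and $|\epsilon|$ small, the Harder--Narasimhan filtration with respect to $(\omega+\epsilon\alpha)^{n-1}$ is a refinement of the one for $\omega^{n-1}$ (\cite[Cor.~2.3]{Mi87}, made possible in the transcendental setting by Proposition~\ref{addstable}), so Theorem~\ref{mainthm} applies to the perturbed polarization and yields $\int_X c_1(Q)\wedge(\omega+\epsilon\alpha)^{n-1}\geq 0$ for \emph{both signs} of $\epsilon$; hence the first-order term $\int_X c_1(Q)\wedge\omega^{n-2}\wedge\alpha$ vanishes for every $\alpha$, and the Hodge index theorem gives $c_1(Q)=0$. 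From there one does not need foliations or Frobenius integrability at all: dualizing gives $\sF\subset\Omega_X^1$ with $c_1(\sF)=0$ and $\det\sF\subset(T_X^*)^{\otimes\rank\sF}$, and since $(i)$ forces $H^1(X,\sO_X)=0$, i.e.\ $\Pic^0(X)=0$, the numerically trivial line bundle $\det\sF$ is torsion \emph{on $X$ itself}, so some $(\det\sF)^{\otimes m}$ is trivial and produces a nonzero section of $(T_X^*)^{\otimes m\cdot\rank\sF}$, contradicting $(i)$. Your ``finite cover trivialising $\det\sF$'' is both unnecessary and delicate, since $(i)$ is a hypothesis on $X$, not on its covers.

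Your route $(i)\Rightarrow(ii)$ through the MRC fibration has a second problem: you invoke ``the Kähler analogue of Boucksom--Demailly--P\u{a}un--Peternell'' (non-uniruled $\Rightarrow$ $K_Y$ pseudo-effective), which is not available for general compact Kähler manifolds; and the inequality $f'^*K_Y\cdot(\mu^*\omega_X)^{n-1}\leq 0$ together with the conclusion $K_Y\equiv 0$ again hides exactly the same slope-versus-class difficulty, now against a polarization $\mu^*\omega_X$ that is not even Kähler on $X'$. This whole detour is avoidable: once $(i)\Rightarrow(iv)$ is established as above, the cycle closes via $(iv)\Rightarrow(iii)\Rightarrow(ii)$, and for $(iii)\Rightarrow(ii)$ the paper first observes that $(iii)\Rightarrow(i)$ (your Bochner/tensor-semistability argument is fine here) forces $H^0(X,\Omega_X^2)=0$, hence $X$ is \emph{projective}, after which the criterion of \cite[Theorem~0.1]{BM01} (applied to rational polarizations approximating $\omega_X$) yields rational connectedness. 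So the correct order of implications, plus the perturbation trick, is what your proposal lacks.
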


\begin{remark}
We have been informed by Q. Zhang that Proposition \ref{introprop} is a special case of \cite[Cor 1]{Zha05},
The proof of \cite[Cor 1]{Zha05} uses an algebraic method, but our proof is purely analytic. 
\end{remark}

\begin{remark}
Mumford has in fact stated the following conjectured which would generalize the
first part of Proposition \ref{introprop}:
for any compact Kähler manifold $X$, $X$ is rationally connected if and only if
$$H^{0}(X, (T_{X}^{*})^{\otimes m})=0\qquad\text{for all }m\geq 1.$$
We thus prove the conjecture of Mumford under the assumption that $-K_X$ is nef. 
\end{remark}

As another application, we study the effectiveness of $c_{2}(T_{X})$. 
It is conjectured by Kawamata that

\begin{conj}
If $X$ is a compact Kähler manifold with nef anticanonical bundle.
Then  
$$\int_{X}(c_{2}(T_{X}))\wedge\omega_{1}\wedge\cdots\wedge\omega_{n-2}\geq 0,$$
for all nef classes $\omega_{1},\cdots, \omega_{n-2}$.
\end{conj}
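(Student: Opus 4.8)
The plan is to combine the Harder--Narasimhan information of Theorem \ref{mainthm} with the Bogomolov--Gieseker inequality for semistable sheaves. Fix a Kähler class $\omega_X$ and let $0\subset\mathcal E_0\subset\dots\subset\mathcal E_s=T_X$ be the Harder--Narasimhan filtration of $T_X$ with respect to $\omega_X^{n-1}$, with semistable graded pieces $\mathcal G_i=\mathcal E_i/\mathcal E_{i-1}$ of rank $r_i$ and slope $\mu_i=c_1(\mathcal G_i)\cdot\omega_X^{n-1}/r_i$; by Theorem \ref{mainthm} all $\mu_i\geq0$. Each $\mathcal G_i$ is torsion-free and $\omega_X$-semistable, so its reflexive hull carries an approximate Hermite--Einstein metric (Bando--Siu) and hence satisfies the Bogomolov inequality; since the difference of second Chern classes between $\mathcal G_i$ and its reflexive hull is an effective codimension-two cycle, this gives
\[
\bigl(2r_i\,c_2(\mathcal G_i)-(r_i-1)\,c_1(\mathcal G_i)^2\bigr)\cdot\omega_X^{n-2}\geq0 .
\]
From the Whitney formula $c(T_X)=\prod_i c(\mathcal G_i)$ and $c_1(T_X)=\sum_i c_1(\mathcal G_i)=-K_X$ one gets
\[
c_2(T_X)\cdot\omega_X^{n-2}=\sum_i c_2(\mathcal G_i)\cdot\omega_X^{n-2}+\tfrac12(-K_X)^2\cdot\omega_X^{n-2}-\tfrac12\sum_i c_1(\mathcal G_i)^2\cdot\omega_X^{n-2},
\]
and inserting the Bogomolov inequality term by term, together with $(-K_X)^2\cdot\omega_X^{n-2}\geq0$ (nefness of $-K_X$), yields
\[
c_2(T_X)\cdot\omega_X^{n-2}\geq\tfrac12(-K_X)^2\cdot\omega_X^{n-2}-\tfrac12\sum_i\tfrac1{r_i}\,c_1(\mathcal G_i)^2\cdot\omega_X^{n-2}.
\]

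If the filtration is trivial, i.e. $T_X$ is $\omega_X^{n-1}$-semistable, there is one term with $r_i=n$ and this reads $c_2(T_X)\cdot\omega_X^{n-2}\geq\frac{n-1}{2n}(-K_X)^2\cdot\omega_X^{n-2}\geq0$, so the estimate is immediate in that case. In general I would try to control the remaining sum by writing $c_1(\mathcal G_i)=a_i\,\omega_X+\beta_i$ with $\beta_i$ $\omega_X$-primitive: the Hodge--Riemann relations give $\beta_i^2\cdot\omega_X^{n-2}\leq0$, so the threat is only from the terms $a_i^2\,\omega_X^n$, which are governed by $\mu_i$. The main obstacle is that $\sum_i\frac1{r_i}c_1(\mathcal G_i)^2\cdot\omega_X^{n-2}$ need not be bounded above by $(-K_X)^2\cdot\omega_X^{n-2}$ on the basis of $\mu_i\geq0$ alone, because the primitive parts $\beta_i$ of distinct graded factors are a priori unrelated; to close the gap one has to exploit the strict ordering $\mu_1>\dots>\mu_s\geq0$ and positivity of the cross terms $c_1(\mathcal G_i)\cdot c_1(\mathcal G_j)\cdot\omega_X^{n-2}$ (for instance through the nonnegative degrees of the determinants of the subsheaves $\mathcal E_i$), and I do not expect this to be routine; absent such an input, the realistic outcome is Kawamata's inequality for the polarization $\omega_X^{n-2}$ under the additional hypothesis that $T_X$ is semistable, and perhaps also in low dimension, where the number of terms is small.

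For genuinely mixed nef classes $\omega_1,\dots,\omega_{n-2}$ one may assume by density that they are Kähler, but then the argument needs the Harder--Narasimhan filtration to have nonnegative slopes with respect to a polarization built from $\omega_1,\dots,\omega_{n-2}$ and an auxiliary Kähler class, i.e. the mixed-polarization analogue of Theorem \ref{mainthm}; since the Harder--Narasimhan filtration is not continuous in the polarization, there is no formal reduction to the equal-class case, and this is the point at which the full conjecture with all the $\omega_i$ distinct seems to lie beyond the method.
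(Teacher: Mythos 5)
First, a point of orientation: the statement you were given is stated in the paper as a \emph{conjecture} (attributed to Kawamata) and is not proved there. The paper only establishes the special case in which the polarization is $(c_1(-K_X)+\epsilon\omega_X)^{n-2}$ for $\epsilon>0$ small (Proposition \ref{thmcalcul}), together with a structure statement in the equality case. So there is no complete proof to measure your attempt against, and your own assessment that the argument does not close is accurate. Your computation up to
$$c_2(T_X)\cdot\omega_X^{n-2}\ \geq\ \tfrac12\,(-K_X)^2\cdot\omega_X^{n-2}-\tfrac12\sum_i\tfrac1{r_i}\,c_1(\mathcal G_i)^2\cdot\omega_X^{n-2}$$
is correct (Whitney plus Bogomolov for the semistable graded pieces, with the reflexive-hull correction handled as in Remark \ref{GRR}); up to the harmless overall factor $\tfrac12$ this is exactly the L\"ubke--Miyaoka inequality \eqref{Miyaokainequality} from which the paper also starts. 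Your observation that the $\omega_X^{n-1}$-semistable case follows at once, with the explicit bound $\tfrac{n-1}{2n}(-K_X)^2\cdot\omega_X^{n-2}\ge 0$, is a correct (if modest) partial result not stated in the paper.

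The genuine gap is the one you name yourself: $\sum_i\frac1{r_i}c_1(\mathcal G_i)^2\cdot\omega_X^{n-2}$ cannot be dominated by $(-K_X)^2\cdot\omega_X^{n-2}$ using only $\mu_i\ge 0$, because the primitive parts of the $c_1(\mathcal G_i)$ are unconstrained. It is worth recording how the paper circumvents this for its special polarization, since it shows the choice $\eta=c_1(-K_X)+\epsilon\omega_X$ is essential rather than convenient: one applies the Hodge index theorem in the form $\bigl(\alpha^2\wedge\eta^{n-2}\bigr)\bigl((-K_X)^2\wedge\eta^{n-2}\bigr)\le\bigl(\alpha\wedge(-K_X)\wedge\eta^{n-2}\bigr)^2$ to trade each $c_1(\mathcal G_i)^2\wedge\eta^{n-2}$ for intersection numbers against $-K_X$, and then expands everything in powers of $\epsilon$. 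The leading coefficients are the mixed degrees $c_1(\mathcal G_i)\wedge(-K_X)^{j}\wedge\omega_X^{n-1-j}$, whose signs are controlled by Theorem \ref{mainthmantinef} applied to the degenerating polarization $(c_1(-K_X)+\epsilon\omega_X)^{n-1}$ (this is where Proposition \ref{lemmastable}, the $\epsilon$-independence of the filtration, enters), together with the vanishing of $(-K_X)^{\nd+1}\wedge\omega_X^{n-1-\nd}$. None of this is available for a general K\"ahler, let alone nef, polarization $\omega_1\cdots\omega_{n-2}$: there is no distinguished class against which to run the index theorem, and, as you also note, Theorem \ref{mainthm} is only proved for polarizations of the form $\omega_X^{n-1}$, so even nonnegativity of the slopes for mixed polarizations is open. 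Your attempt therefore stalls exactly where the state of the art stalls; to recover what the paper actually proves you would have to specialize to $(c_1(-K_X)+\epsilon\omega_X)^{n-2}$ and add the Hodge-index and asymptotic-in-$\epsilon$ steps.
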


When $\dim X=3$, this conjecture was solved by \cite{Xie05}.
Using Theorem \ref{mainthm} and an idea of A. H\"{o}ring, 
we prove

\begin{prop}
Let $(X ,\omega_X) $ be a compact Kähler manifold with nef anticanonical bundle.
Then 
\begin{equation}\label{inequintro}
\int_{X}c_{2}(T_{X})\wedge(c_1 (-K_X)+\epsilon\omega_X)^{n-2}\geq 0
\end{equation}
for every $\epsilon > 0$ small enough.
Moreover, if $X$ is projective and the equality holds for some $\epsilon > 0$ small enough,
then after a finite \'etale cover,
$X$ is either a torus or 
a smooth $\PP^1$-fibration over a torus.
\end{prop}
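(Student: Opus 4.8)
The plan is to run the Harder--Narasimhan filtration of $T_X$ against the polarization $\alpha_\epsilon:=c_1(-K_X)+\epsilon[\omega_X]$ and feed the semi-positivity of its slopes (Theorem~\ref{mainthm}) into a Bogomolov--Gieseker estimate. Fix $\epsilon>0$ small enough that $\alpha_\epsilon$ is a K\"ahler class --- possible since $-K_X$ is nef --- and let $0=\sE_0\subset\sE_1\subset\cdots\subset\sE_s=T_X$ be the Harder--Narasimhan filtration with respect to $\alpha_\epsilon^{n-1}$, with $\alpha_\epsilon$-semistable graded pieces $\sG_i=\sE_i/\sE_{i-1}$ of rank $r_i$ and slopes $\mu_1\ge\cdots\ge\mu_s$. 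By Theorem~\ref{mainthm}, $\mu_s\ge0$, hence $\mu_i\ge0$ for all $i$. Using the identity $c_2(T_X)=\sum_ic_2(\sG_i)+\sum_{i<j}c_1(\sG_i)\wedge c_1(\sG_j)$, the relation $\sum_ic_1(\sG_i)=c_1(-K_X)$, and the Bogomolov--Gieseker inequality $\int_X\bigl(2r_ic_2(\sG_i)-(r_i-1)c_1(\sG_i)^2\bigr)\wedge\alpha_\epsilon^{n-2}\ge0$ for each semistable $\sG_i$ (valid because $\alpha_\epsilon$ is K\"ahler, via Bando--Siu), a short manipulation writes $\int_Xc_2(T_X)\wedge\alpha_\epsilon^{n-2}$ as the sum of the nonnegative quantity $\tfrac12\int_Xc_1(-K_X)^2\wedge\alpha_\epsilon^{n-2}$ (a product of nef classes), the nonnegative Bogomolov discriminants, and the term $-\tfrac12\sum_i\tfrac1{r_i}\int_Xc_1(\sG_i)^2\wedge\alpha_\epsilon^{n-2}$.

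If $T_X$ is $\alpha_\epsilon$-semistable there is a single graded piece and this reduces to
\[
\int_X c_2(T_X)\wedge\alpha_\epsilon^{n-2}
=\tfrac{n-1}{2n}\int_X c_1(-K_X)^2\wedge\alpha_\epsilon^{n-2}
+\tfrac1{2n}\int_X\bigl(2n\,c_2(T_X)-(n-1)c_1(T_X)^2\bigr)\wedge\alpha_\epsilon^{n-2}\ \ge\ 0 ,
\]
which settles that case. In general the last term is not controlled by Hodge index alone --- already $\PP^1\times(\text{torus})$ shows the naive estimate is too weak --- so when $T_X$ is not $\alpha_\epsilon$-semistable one uses geometry. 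Since $\mu_1>\mu(T_X)\ge0$, the maximal destabilizing subsheaf $\sE_1$ satisfies $\mu_{\min}(\sE_1)=\mu_1>\mu_2=\mu_{\max}(T_X/\sE_1)$, hence $\sE_1$ is a foliation, and, $\mu_1$ being positive, it is algebraically integrable with rationally connected general leaf (Bogomolov--McQuillan, Campana--P\u{a}un). The induced almost holomorphic fibration $f\colon X\dashrightarrow Y$ onto a non-uniruled base then reduces the inequality to strictly lower dimension: decomposing $c_2(T_X)$ along $f$, the vertical part is handled by the induction hypothesis applied to the general fibre $F$ (which again has nef anticanonical bundle, its normal bundle being trivial), the horizontal part by the induction hypothesis applied to $Y$, and the mixed terms are products of nef classes.

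The base case is $\dim X\le2$, where one concludes from the classification of surfaces with nef anticanonical bundle, or directly from additivity of the topological Euler characteristic along the $\PP^1$-fibration furnished by the foliation. I expect the main obstacle to be the reduction step: one has to know that nefness of $-K_X$ descends to the base, i.e. that $-K_Y$ is nef --- equivalently, for the maximal rationally connected fibration onto a non-uniruled $Z$, that $K_Z\equiv0$, which holds because $-K_Z$ is then nef and $Z$ is not uniruled. This is where the hypothesis is exploited in an essential, global way, and where the idea attributed to A.~H\"oring enters. Granting it, $Z$ carries a Ricci-flat K\"ahler metric (Yau), and $\int_Zc_2(T_Z)\wedge\omega_Z^{\dim Z-2}\ge0$ follows from Chern--Gauss--Bonnet.

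For the equality statement, an equality in \eqref{inequintro} forces every nonnegative contribution above to vanish. If all $\mu_i=0$, then each $\sG_i$ is $\alpha_\epsilon$-semistable with numerically trivial first Chern class (Hodge index together with $\int_Xc_1(-K_X)^2\wedge\alpha_\epsilon^{n-2}=0$) and vanishing Bogomolov discriminant, hence numerically flat; therefore $T_X$ is numerically flat and, by Demailly--Peternell--Schneider, a finite \'etale cover of $X$ is a complex torus. If some $\mu_i>0$, the fibration above is non-trivial and the vanishing propagates: the general fibre, being rationally connected with nef anticanonical and realizing equality, must be $\PP^1$ by the base case, while the base --- which carries the numerically flat quotient of $T_X$ --- is a torus up to finite \'etale cover; a further \'etale cover then exhibits $X$ as a smooth $\PP^1$-fibration over a torus. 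Here the projectivity hypothesis enters, through the theory of rationally connected fibrations and the Beauville--Bogomolov decomposition.
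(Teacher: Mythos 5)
Your setup (the Harder--Narasimhan filtration for the polarization $\alpha_\epsilon=c_1(-K_X)+\epsilon\omega_X$, semi-positivity of its slopes from Theorem \ref{mainthm}, and the L\"ubke/Bogomolov--Gieseker inequality for the semistable graded pieces) coincides with the paper's, and your semistable case is fine. The proposal goes wrong exactly where you declare that ``the last term is not controlled by Hodge index alone'' and switch to a destabilizing foliation plus induction on dimension. The paper \emph{does} finish with the Hodge index theorem; the two inputs you are missing are (a) Proposition \ref{lemmastable}, which makes the Jordan--H\"older filtration independent of $\epsilon$ as $\epsilon\to0$, and (b) the observation that summing the inequalities $c_1(\sF_i/\sF_{i-1})\wedge(-K_X)^{\nd}\wedge\omega_X^{n-1-\nd}\ge0$ from Theorem \ref{mainthmantinef} gives $(-K_X)^{\nd+1}\wedge\omega_X^{n-1-\nd}=0$, hence each summand vanishes. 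This kills the would-be leading term of $c_1(\sF_i/\sF_{i-1})\wedge(-K_X)\wedge\alpha_\epsilon^{n-2}$ in the $\epsilon$-expansion, so that the Hodge-index bound $c_1(\sF_i/\sF_{i-1})^2\wedge\alpha_\epsilon^{n-2}\le\bigl(c_1(\sF_i/\sF_{i-1})\wedge(-K_X)\wedge\alpha_\epsilon^{n-2}\bigr)^2/\bigl((-K_X)^2\wedge\alpha_\epsilon^{n-2}\bigr)$, combined with a Cauchy--Schwarz comparison of the leading coefficients ($\sum a_i>\sum a_i^2/(r_i\sum a_i)$ once $\sum r_i\ge2$), already dominates the negative contribution at the relevant order of $\epsilon$ --- with no induction, no fibration, and no projectivity assumption for the inequality.

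Concretely, the gaps in your alternative route are: (i) algebraic integrability of the maximal destabilizing foliation with rationally connected leaves (Bogomolov--McQuillan, Campana--P\u{a}un) is a projective statement, whereas \eqref{inequintro} is asserted for arbitrary compact K\"ahler $X$; (ii) the ``decomposition of $c_2(T_X)$ along $f$'' for an almost holomorphic map, with mixed terms claimed to be products of nef classes and the induction hypothesis applied to fibre and base with the polarizations induced by $\alpha_\epsilon$, is never carried out and is not routine; (iii) the descent of nefness of $-K_X$ to the base, which you yourself identify as the main obstacle, is supported only by a circular remark (``$-K_Z$ is then nef'' is precisely what must be proved). The equality case is likewise handled differently in the paper: Bando--Siu yields $\nd(-K_X)=1$ and projective flatness of the graded pieces, and then Ando's classification of Mori contractions with fibres of dimension at most one (blow-up or conic bundle) produces the torus or smooth $\PP^1$-fibration; your numerically-flat argument covers only the subcase where all slopes vanish, and the remaining subcase again leans on the unestablished fibration machinery.
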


As the last application, 
we study the Albanese map of compact Kähler manifolds with nef anticanonical bundles.
It should be first mentioned that the surjectivity of the Albanese map 
has been studied in depth by several authors.
If $X$ is assumed to be projective, 
the surjectivity of the Albanese map was proved by Q.Zhang in \cite{Zha96}.
Still under the assumption that $X$ is projective, \cite{LTZZ10} proved that 
the Albanese map is equidimensional and all the fibres are reduced. 
Recently, M. P\u{a}un \cite{Pau12} proved
the surjectivity for arbitrary compact Kähler manifolds with nef anticanonical bundles,
as a corollary of a powerful method based on a direct image argument.
Unfortunately, it is hard to get information for the singular fibers from his proof.
Using Theorem \ref{mainthm}, we give a new proof of the surjectivity for the Kähler case,
and prove that the map is smooth outside a subvariety of codimension at least 2.

\begin{prop}
Let $X$ be a compact Kähler manifold with nef anticanonical bundle. 
Then the Albanese map is surjective, and smooth outside a subvariety of codimension 
at least~$2$. 
In particular, the fibers of the Albanese map are connected and reduced 
in codimension~$1$.
\end{prop}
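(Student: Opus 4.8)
The plan is to read everything off the differential of the Albanese morphism $\alpha\colon X\to A:=\Alb(X)$, $q:=\dim A$. Since $T_{A}$ is trivial we have $\alpha^{*}T_{A}\cong\sO_{X}^{\oplus q}$, and the tangent map becomes $d\alpha\colon T_{X}\to\sO_{X}^{\oplus q}$; write $\mathcal{Q}\subseteq\sO_{X}^{\oplus q}$ for its image, a torsion-free sheaf of rank $r:=\dim\alpha(X)$. The decisive first step is the identity $\int_{X}c_{1}(\mathcal{Q})\wedge\omega_{X}^{n-1}=0$. On the one hand $\mathcal{Q}$ is a quotient of $T_{X}$, so its minimal slope with respect to $\omega_{X}^{n-1}$ satisfies $\mu_{\min}(\mathcal{Q})\geq\mu_{\min}(T_{X})\geq 0$ by Theorem~\ref{mainthm}, whence $\int_{X}c_{1}(\mathcal{Q})\wedge\omega_{X}^{n-1}\geq 0$. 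On the other hand $\mathcal{Q}$ is a subsheaf of the trivial bundle $\sO_{X}^{\oplus q}$, so $\mu_{\max}(\mathcal{Q})\leq 0$ — otherwise its maximal destabilizing subsheaf would be a semistable sheaf of positive slope mapping nontrivially to $\sO_{X}$, which is impossible — whence $\int_{X}c_{1}(\mathcal{Q})\wedge\omega_{X}^{n-1}\leq 0$. Letting $\widehat{\mathcal{Q}}$ be the saturation of $\mathcal{Q}$ in $\sO_{X}^{\oplus q}$, the inclusion $\widehat{\mathcal{Q}}\subseteq\sO_{X}^{\oplus q}$ again forces $\int_{X}c_{1}(\widehat{\mathcal{Q}})\wedge\omega_{X}^{n-1}\leq 0$, while $\mathcal{Q}\subseteq\widehat{\mathcal{Q}}$ with torsion quotient forces the reverse inequality, so $\int_{X}c_{1}(\widehat{\mathcal{Q}})\wedge\omega_{X}^{n-1}=0$; hence the globally generated torsion-free quotient $\mathcal{G}:=\sO_{X}^{\oplus q}/\widehat{\mathcal{Q}}$ satisfies $\int_{X}c_{1}(\mathcal{G})\wedge\omega_{X}^{n-1}=0$.

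Surjectivity I would then prove by contradiction. If $r<q$ then $\mathcal{G}$ has positive rank $q-r$; wedging the images in $\mathcal{G}$ of $q-r$ suitably chosen standard sections of $\sO_{X}^{\oplus q}$ gives a nonzero section of $\det\mathcal{G}$, so $\det\mathcal{G}=\sO_{X}(D)$ with $D$ effective and $\int_{X}[D]\wedge\omega_{X}^{n-1}=\int_{X}c_{1}(\mathcal{G})\wedge\omega_{X}^{n-1}=0$, forcing $D=0$ and $\det\mathcal{G}\cong\sO_{X}$. A globally generated torsion-free sheaf of rank $q-r$ with trivial determinant is trivial (take $q-r$ general sections; the induced $\sO_{X}^{\oplus(q-r)}\to\mathcal{G}$ has determinant a nonzero constant), so $\mathcal{G}\cong\sO_{X}^{\oplus(q-r)}$ and $\widehat{\mathcal{Q}}=\ker(\sO_{X}^{\oplus q}\to\sO_{X}^{\oplus(q-r)})$ for a surjection given by a constant matrix; thus $\widehat{\mathcal{Q}}=V\otimes\sO_{X}$ for a fixed subspace $V\subseteq\mathbb{C}^{q}=H^{0}(A,T_{A})$ of dimension $r$. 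Then $d\alpha$ has image in $V\otimes\sO_{X}$, so every linear form in the $(q-r)$-dimensional annihilator $V^{\perp}\subseteq(\mathbb{C}^{q})^{*}=H^{0}(A,\Omega_{A}^{1})$ pulls back to the zero $1$-form on $X$, contradicting the fact that $\alpha^{*}\colon H^{0}(A,\Omega_{A}^{1})\to H^{0}(X,\Omega_{X}^{1})$ is an isomorphism. Hence $r=q$ and $\alpha$ is surjective.

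Now $r=q$, so $\mathcal{T}:=\sO_{X}^{\oplus q}/\mathcal{Q}$ is a torsion sheaf supported exactly on the non-submersion locus $Z$ of $\alpha$, and $c_{1}(\mathcal{Q})=-c_{1}(\mathcal{T})=-[E]$ where $E$ is the divisorial part of $Z$ counted with multiplicity. Since $\int_{X}c_{1}(\mathcal{Q})\wedge\omega_{X}^{n-1}=0$ while $\int_{X}[E]\wedge\omega_{X}^{n-1}>0$ unless $E=0$, we get $E=0$: the locus $Z$ has codimension $\geq 2$ and $\alpha$ is a submersion off $Z$. For the last assertion, let $X\xrightarrow{g}A'\xrightarrow{s}A$ be the Stein factorization, so $A'$ is normal and $g$ has connected fibres. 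If $s$ were ramified along a divisor $D_{0}\subseteq A$, then at a general point of a codimension-$1$ component of $\alpha^{*}D_{0}$ the relation $d\alpha=ds\circ dg$ would force non-submersivity, putting a divisor into $Z$ — impossible; hence $s$ is \'etale in codimension $1$, so \'etale everywhere by Zariski--Nagata purity, so $A'$ is a torus. The universal property of $\Alb(X)$ then yields $h\colon A\to A'$ with $g=h\circ\alpha$, whence $s\circ h=\mathrm{id}_{A}$; since $s$ is \'etale and $A'$ is connected, $h$ is an isomorphism and $\deg s=1$. Therefore $\alpha=g$ has connected fibres, and these fibres are reduced in codimension $1$ because $\alpha$ is a submersion — hence has smooth, in particular reduced, fibres — outside the codimension-$\geq 2$ subvariety $Z$.

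The main obstacle is the first paragraph: one must set up the Harder-Narasimhan numerics for coherent sheaves carefully (subsheaves of a trivial bundle have $\mu_{\max}\leq 0$; quotients of $T_{X}$ inherit $\mu_{\min}\geq 0$ from Theorem~\ref{mainthm}; the slope identity passes to the saturation $\widehat{\mathcal{Q}}$ and hence to $\det\mathcal{G}$), since the entire argument rests on the single equality $\int_{X}c_{1}(\mathcal{Q})\wedge\omega_{X}^{n-1}=0$. A secondary difficulty is that this equality only controls the codimension of the degeneracy locus \emph{inside $X$}, which is why the connectedness and reducedness statements have to be routed through the Stein factorization and purity of the branch locus rather than read off directly over $A$.
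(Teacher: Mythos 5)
Your proof is correct, but it takes a genuinely different route from the paper's. The paper works form by form: it endows each Harder--Narasimhan graded piece of $T_X$ with a Hermitian--Einstein metric of nonnegative slope, glues these via Lemma~\ref{lemmaestimatexten} into metrics $h_\epsilon$ on $T_X$ whose $\omega$-trace of curvature is $\geq-\epsilon\,\Id$, and then shows that a holomorphic $1$-form $\tau$ vanishing at a point would force $\lim_{\epsilon\to 0}\int_X \frac{i}{2\pi}\partial\overline\partial\ln\|\tau\|^2_{h_\epsilon^*}\wedge\omega^{n-1}>0$ via Siu's semicontinuity of Lelong numbers --- contradicting exactness; the singular-filtration case is handled on a blow-up. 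You instead run a purely numerical argument on $d\alpha\colon T_X\to\sO_X^{\oplus q}$: the single identity $\int_X c_1(\image d\alpha)\wedge\omega^{n-1}=0$, obtained by squeezing $\mu_{\min}(T_X)\geq 0$ (Theorem~\ref{mainthm}) against $\mu_{\max}(\sO_X^{\oplus q})\leq 0$, yields surjectivity (via triviality of the quotient by the saturation) and codimension~$\geq 2$ of the degeneracy locus (via effectivity of $c_1$ of the torsion cokernel). Your approach avoids Hermitian--Einstein metrics and Lelong numbers entirely, needs only the statement of Theorem~\ref{mainthm} rather than its proof mechanism, and --- unlike the paper, which leaves this implicit --- actually spells out how connectedness and reducedness follow, via Stein factorization, purity of the branch locus, and the universal property of the Albanese torus. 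Two steps deserve one more line each: the assertion that a globally generated torsion-free sheaf with trivial determinant is trivial is not finished by noting that $\det$ of the map $\sO_X^{\oplus(q-r)}\to\sG$ is a nonzero constant (the cokernel could a priori be a torsion sheaf in codimension $\geq 2$); one should pass to $\sG^{**}$, observe that a morphism of reflexive sheaves which is an isomorphism outside codimension~$2$ is an isomorphism, and then sandwich $\sG$ between two copies of $\sO_X^{\oplus(q-r)}$. Likewise, since $X$ is only assumed Kähler, the purity of the branch locus you invoke should be the analytic version (Grauert--Remmert), which does hold for finite maps of normal complex spaces onto manifolds.
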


\noindent{\bf Acknowledgements:} I would like to thank my supervisor J.-P. Demailly who
brought me to this interesting subject and also for his insightful criticisms and
helpful discussions during this work. Thousands of thanks to A. H\"oring for his very
helpful suggestions and comments on the drafts of this paper. I would also like to
thank Th. Peternell and Q. Zhang for their interests and comments on this work.

\section{Preparatory lemmas}

The results in this section should be well known to experts.
For the convenience of readers, we give an account of the proofs here. 
\smallskip

Since the polarization discussed in this article takes values in real coefficient, 
it is useful to discuss firstly the stable properties of the Harder-Narasimhan semi-stable filtration
with respect to certain transcendental polarization.

\begin{prop}\label{addstable}
Let $(X, \omega )$ be a compact K\"ahler manifold of dimension $n$ and 
let $\sF$ be a torsion free coherent sheaf on $X$. 
Let $\alpha\in H^{1,1}(X, \mathbb{R})$ be a big and nef class.
\footnote{We refer to \cite[Def 6.16]{Dem12} for the precise definition of big, nef class in $H^{1,1} (X, \mathbb{R})$.}
If $\sF$ is $\alpha^{n-1}$-stable, then it is stable after a small perturbation. 
\end{prop}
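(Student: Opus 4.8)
The plan is to argue by a standard openness/finiteness argument for Harder–Narasimhan data, applied to the affine line of polarizations $\alpha_t := \alpha + t\omega$ for $t \geq 0$, using that $\alpha$ big and nef forces $\alpha_t$ to be a Kähler class for every $t > 0$ (indeed $\alpha$ nef means $\alpha + t\omega$ is Kähler for $t>0$, and bigness of $\alpha$ is what makes the slope $\mu_{\alpha^{n-1}}$ a meaningful limit). First I would recall that for a fixed torsion-free coherent sheaf $\sF$, the subsheaves $\sG \subset \sF$ that can appear as maximal destabilizing subsheaves (or more generally as terms of some HN filtration) for \emph{some} polarization in a bounded region form a \emph{bounded} family: their ranks are bounded by $\rank \sF$, and — this is the key input — the possible values of $c_1(\sG) \cdot \alpha_t^{n-1}$ are controlled, so only finitely many numerical classes $c_1(\sG)$ are relevant near $t = 0$. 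This boundedness is exactly the content that makes the HN filtration vary only finitely often; I would cite the transcendental Bogomolov–type / boundedness results (e.g. the discussion in \cite{Tom} or the relevant statements in \cite{Dem12}) rather than reprove them.

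The main step is then the following dichotomy. Consider the finitely many numerical classes $\xi = c_1(\sG)$ arising from saturated subsheaves $\sG$ of every intermediate rank that are candidates to destabilize $\sF$ with respect to some $\alpha_t$, $t \in [0, \varepsilon_0]$. For each such $\xi$ with $\rank \sG = r$, the quantity
\[
\varphi_\xi(t) \;:=\; \frac{1}{r}\, \xi \cdot (\alpha + t\omega)^{n-1} \;-\; \frac{1}{\rank \sF}\, c_1(\sF) \cdot (\alpha + t\omega)^{n-1}
\]
is a polynomial in $t$. By hypothesis $\sF$ is $\alpha^{n-1}$-stable, so $\varphi_\xi(0) < 0$ for every candidate $\xi$ coming from a genuine saturated proper nonzero subsheaf — strictly, because the slope is computed with the big class $\alpha$ (this is where bigness enters: a big and nef class still separates slopes of distinct-slope subsheaves, so strict stability at $t=0$ is meaningful and the inequality is strict). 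Since there are only finitely many such polynomials $\varphi_\xi$, and each is continuous with $\varphi_\xi(0) < 0$, there is a uniform $\varepsilon > 0$ with $\varphi_\xi(t) < 0$ for all these $\xi$ and all $t \in (0, \varepsilon)$. I would then conclude: if $\sF$ were not $\alpha_t^{n-1}$-semistable for some such $t$, its maximal destabilizing subsheaf $\sG$ would have $c_1(\sG)$ among the finitely many candidate classes and would satisfy $\varphi_{c_1(\sG)}(t) \geq 0$, a contradiction. Hence $\sF$ is $\alpha_t^{n-1}$-stable for all small $t > 0$ — note semistability plus the strict inequalities upgrades to stability by the same finiteness of subsheaves of each intermediate rank.

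The delicate point — and the one I would flag as the real obstacle — is the boundedness claim: for a transcendental (non-rational) polarization one does not have a Hilbert scheme or the usual Grothendieck boundedness machinery directly available, and one must ensure that the set of candidate destabilizing subsheaves, over the whole segment $t \in [0,\varepsilon_0]$ at once, is bounded, so that only finitely many numerical classes $c_1(\sG)$ occur. The way I would handle this is to use the Bogomolov inequality / the discreteness of the set $\{\,c_1(\sG)\cdot\alpha^{n-1} : \sG \subset \sF \text{ saturated}\,\}$ within the relevant slope window — the maximal destabilizing subsheaf has slope in a bounded interval, hence $c_1(\sG)\cdot\alpha^{n-1}$ lies in a bounded set, and combined with the Hodge index theorem this pins $c_1(\sG)$ down to finitely many classes. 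Once boundedness is secured, everything else is the elementary polynomial-continuity argument above. I would write the proof as: (1) reduce to showing $\sF$ is $\alpha_t^{n-1}$-semistable for small $t>0$; (2) invoke boundedness to get finitely many candidate classes $\xi$; (3) run the finite family of polynomial inequalities $\varphi_\xi(t)<0$; (4) observe semistability + strict slope inequalities + finiteness of subsheaves gives stability.
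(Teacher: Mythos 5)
Your overall architecture---reduce to a finiteness statement for the numerical data of candidate destabilizers, then conclude by continuity of finitely many polynomial slope functions in the perturbation parameter---is the same as the paper's. But the step you yourself flag as ``the real obstacle'' is precisely the content of the proposition, and the mechanism you propose for it does not work. Knowing that $c_1(\sG)\cdot\alpha^{n-1}$ lies in a bounded interval does \emph{not} pin $c_1(\sG)$ down to finitely many classes: the pairing of the lattice $H^{2}(X,\bZ)/\mathrm{tors}$ against the single transcendental class $\alpha^{n-1}$ is in general a dense (hence non-discrete) subgroup of $\bR$, so ``bounded'' does not imply ``finitely many values'', and the Hodge index theorem gives no control on the class $c_1(\sG)$ from this one degree alone (you would at least also need a uniform bound on $c_1(\sG)^2\cdot\alpha^{n-2}$ over all subsheaves, which is not available). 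Deferring to a cited transcendental boundedness result is not an option either: in the K\"ahler, non-rational-polarization setting, the discreteness you want to quote is exactly what this proposition is supplying to the rest of the paper.

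The missing idea is Lemma \ref{addlemma}. Since $\alpha$ is big and nef, Demailly--P\u{a}un gives $\alpha=c\omega+[T]$ with $T\geq 0$ and $c>0$, hence $\alpha^{n-1}=(c\omega)^{n-1}+T_1$ with $T_1\geq 0$; perturbing the strictly positive smooth part $(c\omega)^{n-1}$ inside $H^{2(n-1)}(X,\bQ)$ and adding back $T_1$, one obtains a \emph{rational} basis $\{e_j\}$ with $\alpha^{n-1}=\sum_j\lambda_j e_j$, $\lambda_j>0$, each $e_j$ representable by a strictly positive current. Then for any subsheaf $\sG$ the numbers $a_j=c_1(\sG)\cdot e_j$ are rationals with uniformly bounded denominators, bounded \emph{above} by Kobayashi's lemma (this is where the positivity of each $e_j$ is used), and bounded \emph{below} because the positively weighted sum $\sum_j\lambda_j a_j=\rank(\sG)\,\mu(\sG)$ is bounded below while each individual summand is bounded above. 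A bounded set of rationals with bounded denominators is finite, so the slopes of subsheaves near the supremum take only finitely many values; strict $\alpha^{n-1}$-stability then yields a uniform positive gap $\mu(\sF)-\sup_{\sG}\mu(\sG)>0$, after which your polynomial-continuity step (together with the same Kobayashi upper bounds applied to the classes $\alpha^{n-1-k}\beta^{k}$ appearing in the expansion of the perturbed polarization) closes the argument. Note also that bigness enters exactly in extracting the K\"ahler part $(c\omega)^{n-1}$---not, as you suggest, in ``making the slope a meaningful limit''---and that restricting to the single ray $\alpha+t\omega$ proves less than the uniform-gap statement the paper actually uses for perturbations in arbitrary directions.
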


To prove Proposition \ref{addstable}, we need the following lemma.

\begin{lemme}\label{addlemma}
In the situation of Proposition \ref{addstable},  
we can find a basis $\{e_1, \cdots , e_s \}$ of $H^{2(n-1)}(X,\bQ)$, such that
   
{\em (i)}: $\alpha^{n-1}=\sum\limits_{i=1}^{s} \lambda_i\cdot e_i$ for some $\lambda_i > 0$.

{\em (ii)}: $e_i$ can be represented by strictly positive currents (not necessary $(n-1, n-1)$-currents), 
i.e., for every smooth positive $2$-forms $f\in C^{\infty} (X, \Omega_\mathbb{C} ^2)$,
we have $\langle e_i, f\rangle > 0$.
\end{lemme}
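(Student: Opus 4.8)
The idea is to build the basis $\{e_1,\dots,e_s\}$ by perturbing a single good starting point, namely a Kähler power. First I would observe that since $\alpha$ is big and nef, for any Kähler class $\omega$ and any $t>0$ the class $\alpha+t\omega$ is Kähler, hence $(\alpha+t\omega)^{n-1}$ is represented by a strictly positive smooth $(n-1,n-1)$-form, and in particular it pairs strictly positively with every smooth positive $2$-form on $X$ (a positive $(n-1,n-1)$-form wedged with a positive $(1,1)$-form is a positive volume form, and more generally wedging with a positive real $2$-form still gives something with nonnegative, and in fact strictly positive after integration, pairing because the $(1,1)$-part dominates). The point of property (ii) is exactly that the condition ``$\langle e_i,f\rangle>0$ for all smooth positive $2$-forms $f$'' is an \emph{open} condition on $e_i$ in $H^{2(n-1)}(X,\bR)$: the set $P$ of positive smooth $2$-forms, modulo scaling, has compact closure in a suitable sense on the finite-dimensional cohomology, so the set of classes pairing strictly positively with all of them is open and convex.

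Next I would exploit this openness together with the density of $H^{2(n-1)}(X,\bQ)$ in $H^{2(n-1)}(X,\bR)$. Pick a reference Kähler class $\omega$; then $\beta_0 := \alpha^{n-1}$ lies in the closure of the open convex cone $\mathcal C$ of real classes satisfying (ii), and $\beta_t := (\alpha+t\omega)^{n-1}\in \mathcal C$ for all $t>0$, with $\beta_t\to\beta_0$ as $t\to 0$. Fix once and for all a small $t_0>0$ so that $\beta:=\beta_{t_0}\in\mathcal C$; then an entire neighbourhood $U$ of $\beta$ lies in $\mathcal C$. Inside $U$ I can choose rational classes $e_1,\dots,e_s$ forming a basis of $H^{2(n-1)}(X,\bR)$ (equivalently of $H^{2(n-1)}(X,\bQ)$): since $U$ is open and nonempty it contains a basis, and by perturbing within $U$ I may take the basis vectors in $H^{2(n-1)}(X,\bQ)$. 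This gives property (ii) for all the $e_i$ by construction.

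It remains to arrange property (i), that $\alpha^{n-1}$ is a \emph{positive} combination of the $e_i$. For this I would choose the $e_i$ not just anywhere in $U$ but clustered tightly around $\beta$, and then relate $\beta$ to $\alpha^{n-1}$. Concretely: expand $\alpha^{n-1}$ in the chosen basis, $\alpha^{n-1}=\sum_i \mu_i e_i$. If the $e_i$ are chosen sufficiently close to a common point $\beta\in\mathcal C$, then any class near $\beta$ — and $\alpha^{n-1}=\beta_0$ is near $\beta$ once $t_0$ is small — has all its barycentric-type coordinates close to $1/s$, hence strictly positive. More carefully, one writes $\alpha^{n-1} = \beta + (\beta_0-\beta)$ where $\|\beta_0-\beta\|$ is controlled by $t_0$, each $e_i=\beta+v_i$ with $\|v_i\|$ small, and solves the linear system; the coefficients are a small perturbation of the coefficients of $\beta$ in the basis $\{\beta+v_i\}$, which are $(1/s,\dots,1/s)$ in the limit $v_i\to 0$. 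Choosing the $v_i$ and $t_0$ small enough forces all $\mu_i>0$, which is (i). The main obstacle is this last bookkeeping step — making (i) and (ii) hold \emph{simultaneously} — but it reduces to elementary linear algebra plus the openness of $\mathcal C$, so no hard input beyond the bigness and nefness of $\alpha$ (used only to place $\alpha^{n-1}$ in $\overline{\mathcal C}$ as a limit of Kähler powers) is needed.
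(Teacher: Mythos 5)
Your proposal has a genuine gap, and it sits precisely at the point where the bigness of $\alpha$ must be used. You only ever place $\beta_0=\alpha^{n-1}$ in the \emph{closure} of the open convex cone $\mathcal C$ of classes pairing strictly positively with positive $2$-forms, as a limit of the Kähler powers $(\alpha+t\omega)^{n-1}$. But the conclusion of the lemma forces $\alpha^{n-1}\in\mathcal C$ itself: if $e_i\in\mathcal C$ and $\lambda_i>0$, then $\sum\lambda_i e_i\in\mathcal C$ (a strictly positive combination of points of an open convex cone lies in the open cone), so a boundary point of $\mathcal C$ can never be written in the required form. Nefness alone already gives $\alpha^{n-1}\in\overline{\mathcal C}$, and for a nef non-big class the lemma is simply false (take $\alpha=0$); so your remark that bigness is ``used only to place $\alpha^{n-1}$ in $\overline{\mathcal C}$'' cannot be right. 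The missing ingredient is the Demailly--P\u{a}un decomposition of a big nef class, $\alpha=c\omega+[T]$ with $c>0$ and $T\geq 0$, which combined with nefness yields $\alpha^{n-1}=(c\omega)^{n-1}+T_1$ with $T_1\geq 0$ a closed positive $(n-1,n-1)$-current; the smooth strictly positive summand $(c\omega)^{n-1}$ is what makes $\langle\alpha^{n-1},f\rangle>0$ for every positive $f$. This is the heart of the paper's proof and is absent from yours.

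The final ``bookkeeping'' step is also quantitatively wrong as stated. If the basis vectors $e_i=\beta+v_i$ all cluster in an $\varepsilon$-ball around $\beta$, the matrix of the basis degenerates and the coordinate functionals blow up like $\varepsilon^{-1}$; consequently a class at distance $\delta$ from $\beta$ has coordinates differing from those of $\beta$ by $O(\delta/\varepsilon)$, not $O(\delta)$. (In $\mathbb R^2$: $\beta=(1,0)$, $e_1=(1,\varepsilon)$, $e_2=(1,-\varepsilon)$, and $(1,\delta)$ has a negative coordinate as soon as $\delta>\varepsilon$.) So ``any class near $\beta$ has coordinates close to $1/s$'' is false, and with your order of quantifiers ($t_0$ fixed first, then the $e_i$ chosen in a small $U$) the argument does not close. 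The clean repair is to center the simplicial decomposition at the class you actually want to decompose rather than at a nearby Kähler power --- which again requires knowing that class is in the open cone. This is exactly how the paper proceeds: it writes $(c\omega)^{n-1}=\sum\lambda_i w_i$ exactly, with $\lambda_i>0$, $\sum\lambda_i=1$, and the $w_i$ smooth strictly positive forms near $(c\omega)^{n-1}$ chosen in the translate of the lattice $H^{2(n-1)}(X,\bQ)$ by $-[T_1]$, and then takes $e_i:=w_i+T_1$, so that $\sum\lambda_i e_i=\alpha^{n-1}$ on the nose and each $e_i$ is rational and represented by a strictly positive current.
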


\begin{proof}
Thanks to \cite[Thm 2.12]{DP04}, $\alpha$ can be written as $c\omega + [T]$, where $[T]\geq 0$ in the sense of currents and $c>0$.
Combining with the fact that $\alpha $ is nef, we have
$$\alpha^{n-1} \geq (c\omega)^{n-1} \qquad\text{in the sense of currents.}$$ 
As a consequence, we know that $\alpha^{n-1} = (c\omega)^{n-1} + T_1$, where $T_1 \geq 0$ is a semi-positive $(n-1,n-1)$-current.
Since $(c\omega)^{n-1} > 0$ is a smooth $(n-1 , n-1)$-class, 
we can take a basis $\{w_i\}$ of $H^{2(n-1)}(X,\bQ)$, such that 

{\em (i)} each $w_i$ is in a neighborhood of $(c\omega)^{n-1}$.
\footnote{Therefore $w_i$ are smooth strictly positive forms, but not necessary of $(n-1,n-1)$-forms.}

{\em (ii)}  $(c\omega)^{n-1} = \sum\limits_i \lambda_i w_i$ for some $\lambda_i > 0$ and $\sum\limits_i \lambda_i =1$.

{\em (iii)} $ w_i + T_1 \in H^{2(n-1)} (X, \mathbb{Q})$ for every $i$.

It is easy to see that the basis $\{ w_i + T_1 \}$ satisfies the conditions of Lemma \ref{addlemma}.
\end{proof}

\begin{proof}[proof of Proposition \ref{addstable}]

For every coherent sheaf $\sG$, set 
$$\mu (\sG):=\frac{1}{\rank (\sG)}\int_{X}c_{1}(\sG)\wedge\alpha^{n-1}$$
be the slope of $\sG$ with respect to $\alpha^{n-1}$.
Let $\{ e_i\}$ be the basis constructed in Lemma \ref{addlemma}.
To prove the proposition,
it is sufficient to prove that
$$\sup \{\mu (\sG) | 
 \sG \text{ a coherent subsheaf of } \sF \text{ with strictly smaller rank }\}$$
is strictly smaller than $\mu (\sF)$.

If it is not true, then there exists a sequence of subsheaves $\{\sG_i\}_{i=1}^{+\infty}$ 
of $\sF$, such that $\lim\limits_{i\rightarrow +\infty} \mu (\sG_i) =\mu (\sF)$.
Set $a_{i,j} : = c_1 (\sG_i) \cdot e_j$. 
By the proof of \cite[Lemma 7.16, Chapter 5]{Kob87}, 
there exists a uniform constant $M$ such that
\begin{equation}\label{upperbound}
 a_{i,j } \leq M \qquad\text{for every } i,j.
\end{equation}
By $(i)$ of Lemma \ref{addlemma}, we have 
$$\rank (\sG_i) \cdot \mu (\sG_i )= c_1 (\sG_i) \cdot (\sum_j \lambda_j \cdot e_j) = \sum_j a_{i,j} \cdot \lambda_j .$$
Since $\mu (\sG_i ) $ is uniformly bounded from below, 
$ \sum\limits_j \lambda_j a_{i,j} $ is also uniformly bounded from below.
Combining with the fact that $\lambda_j >0$ and \eqref{upperbound}, 
we know that $\{ a_{i,j }\}_{i, j}$ is also uniformly bounded from below, 
i.e., there exists a uniform constant $N$ such that 
\begin{equation}\label{lowbound}
 a_{i,j } \geq N \qquad\text{for every } i,j.
\end{equation}
Noting that $c_1 (\sG_i) \in H^2 (X, \mathbb{Z})$,  Then \eqref{upperbound} and \eqref{lowbound} implies that 
$\{ a_{i,j }\}_{i, j}$ contains only finite different elements.
Therefore $\{\mu (\sG_i )\}_{i=1}^{+\infty}$ contains only finite different values.
We have thus the contradiction.
\end{proof}

We are also interested in the following situation.

\begin{prop}\label{lemmastable}
Let $(X, \omega )$ be a compact K\"ahler manifold of dimension $n$ and let $\sF$ 
be a torsion free coherent sheaf on $X$. 
Let $D_1,\cdots ,D_{n-1}$ be nef classes in $H^{1,1}(X, \bQ)$ and let $A$ be a K\"{a}hler class.
Let $a$ be a sufficiently small positive number.
Then the Harder-Narasimhan semi-stable filtration of $E$ with respect to 
$( D_1+a\cdot A, \cdots , D_{n-1}+ a\cdot A )$
is independent of $a$.
\end{prop}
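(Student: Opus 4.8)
The plan is to reduce the statement to a finiteness argument about the possible destabilizing subsheaves, in the spirit of the proof of Proposition \ref{addstable}. For a fixed small $a>0$, write $\alpha_a := D_1 + a\cdot A, \dots, D_{n-1}+a\cdot A$ and note that $\alpha_a^{\text{(top)}} := (D_1+aA)\wedge\cdots\wedge(D_{n-1}+aA)$ expands as a polynomial in $a$ with coefficients in $H^{2(n-1)}(X,\bR)$; the leading term (the $a^{n-1}$ coefficient) is $A^{n-1}$, which is represented by a strictly positive smooth form, and for all small $a>0$ the class $(D_1+aA)\wedge\cdots\wedge(D_{n-1}+aA)$ is itself represented by a strictly positive (not necessarily $(n-1,n-1)$) form since each factor is nef plus a Kähler class hence big and nef. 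So I would first record that for each small $a$ the slope function $\mu_a(\sG) = \frac{1}{\rank\sG}\int_X c_1(\sG)\wedge(D_1+aA)\wedge\cdots\wedge(D_{n-1}+aA)$ is a well-defined polynomial in $a$ whose coefficients are of the form $c_1(\sG)\cdot e_j$ for a fixed $\bQ$-basis $\{e_j\}$ of $H^{2(n-1)}(X,\bQ)$ chosen as in Lemma \ref{addlemma} adapted to the positive class $A^{n-1}$ and the positivity of the whole product.

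The key step is then: for any coherent subsheaf $\sG\subset\sF$ of rank $<\rank\sF$, the integers $a_j := c_1(\sG)\cdot e_j$ lie in a fixed finite set, provided $\sG$ is a candidate destabilizer, i.e. $\mu_a(\sG)\geq\mu_a(\sF)$ for at least one small $a$. Indeed the upper bound $a_j\leq M$ comes from \cite[Lemma 7.16, Chapter 5]{Kob87} exactly as before (each $e_j$ is a small perturbation of a strictly positive form, so the argument giving \eqref{upperbound} applies verbatim). For the lower bound one uses that $\mu_a(\sG)\geq\mu_a(\sF)$ forces $\sum_j \lambda_j(a)\, a_j$ to be bounded below, where $\lambda_j(a)>0$ are the (positive, polynomially varying in $a$) coefficients expressing the product class in the basis $\{e_j\}$; combined with $a_j\leq M$ and $\lambda_j(a)>0$ this yields $a_j\geq N$ for a uniform $N$, hence finiteness as in \eqref{lowbound}. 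Since the $a_j$ determine $c_1(\sG)$ in $H^{2(n-1)}(X,\bQ)$ under the chosen basis, the class $c_1(\sG)$ of any potential destabilizer ranges over a finite set, and likewise the rank ranges over $\{1,\dots,\rank\sF-1\}$; hence there are only finitely many possible pairs $(\rank\sG, c_1(\sG))$ occurring among destabilizers. For each such pair the function $a\mapsto \mu_a(\sG)-\mu_a(\sF)$ is a polynomial, so its sign, and the sign of all pairwise differences $\mu_a(\sG)-\mu_a(\sG')$, is eventually constant as $a\to 0^+$; choosing $a$ small enough that all these finitely many polynomials have locally constant sign gives that the maximal destabilizing subsheaf — and then inductively the whole Harder-Narasimhan filtration, which is built successively from maximal destabilizers of the successive quotients (each quotient being again torsion free, with the same finiteness input) — does not change with $a$.

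The main obstacle I anticipate is controlling the polynomial $\lambda_j(a)$: one needs that the $e_j$ can be chosen once and for all (independently of $a$) so that the entire family of product classes $(D_1+aA)\wedge\cdots\wedge(D_{n-1}+aA)$, $0<a\leq a_0$, has all its coordinates positive and stays representable by strictly positive forms, so that the Kobayashi bound and the lower-bound trick apply uniformly in $a$. This is where one invokes that $D_i + aA$ is big and nef for $a>0$ and uses a Lemma \ref{addlemma}-type construction, but now for a whole one-parameter family rather than a single class; one should check that a single basis works for a small interval of $a$, which follows because positivity is an open condition and the leading term $A^{n-1}$ is strictly positive. A secondary, more bookkeeping point is the inductive passage through the Harder-Narasimhan filtration: one must verify that after quotienting by the (now $a$-independent) maximal destabilizer, the reflexive/torsion-free hull behaves well and the same finiteness applies to subsheaves of the quotient, which is standard. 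Once these are in place, the proof is essentially a replay of Proposition \ref{addstable} with $a$ as an auxiliary parameter.
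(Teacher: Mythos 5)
There is a genuine gap at the heart of your finiteness argument, namely the step ``combined with $a_j\leq M$ and $\lambda_j(a)>0$ this yields $a_j\geq N$ for a uniform $N$.'' In Proposition \ref{addstable} this trick works because the class being decomposed is \emph{fixed} and its coordinates $\lambda_j$ are fixed positive numbers. Here the class $(D_1+aA)\wedge\cdots\wedge(D_{n-1}+aA)$ varies with $a$ and degenerates as $a\to 0^+$ to $D_1\cdots D_{n-1}$, which lies on the boundary of the positive cone (and may even be zero). Consequently, in any basis $\{e_j\}$ chosen once and for all near $A^{n-1}$, some of the coefficients $\lambda_j(a)$ necessarily tend to $0$, and the inequality $\lambda_{j_0}(a)\,a_{j_0}\geq -C-M\sum_{j\neq j_0}\lambda_j(a)$ only gives $a_{j_0}\geq -C'/\lambda_{j_0}(a)$, which is \emph{not} uniform in $a$. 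Concretely, already for a surface with $D$ nef, $D^2=0$: a subsheaf $\sG$ with $c_1(\sG)\cdot D$ bounded but $c_1(\sG)\cdot A\to-\infty$ can still satisfy $c_1(\sG)\cdot(D+aA)\geq -C$ for suitably small $a>0$, so the set of ``candidate destabilizers over at least one small $a$'' is not a priori controlled by your argument, and the finite list of polynomials $a\mapsto\mu_a(\sG)-\mu_a(\sF)$ on which your final sign argument rests is never produced.

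The paper's proof repairs exactly this point by an induction on the order in $a$. Writing the slope as $\sum_k a^k\, c_1(\sF_p)\wedge D^{n-k-1}\wedge A^k$ for the first HN piece $\sF_p$ at parameter $a_p\to 0$, one first shows the $k=0$ coefficient $c_1(\sF_p)\wedge D^{n-1}$ is bounded below (because the full slope is bounded below by that of $E$ and the higher-order terms carry factors $a_p^k$ and are bounded above); Lemma \ref{auxi} then makes this coefficient take only finitely many values, so it stabilizes along a subsequence. Only \emph{after} the coefficients of order $<k$ have been stabilized can one compare $\sF_p$ with a fixed $\sF_{p_0}$, cancel the lower-order terms, divide by $a_p^k$, and deduce the lower bound for the $a^k$ coefficient. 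Your proposal tries to obtain all these lower bounds simultaneously from a single positivity decomposition, which is precisely what the degeneration at $a=0$ forbids; you would need to incorporate this step-by-step bootstrapping (or an equivalent device) for the proof to go through.
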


\begin{remark}
If $A$ has rational coefficients, Proposition \ref{lemmastable} is proved in \cite{KMM04}.
When $A$ is not necessarily rational, the proof turns out to be more complicated.
We begin with the following easy observation.
\end{remark}

\begin{lemme}\label{auxi}
In the situation of Proposition \ref{lemmastable}, fix a $k\in \{0, 1 ,2,\cdots, n\}$. 
Then we can find a basis $\{e_1, \cdots , e_s \}$ of $H^{2k}(X,\bQ)$ depending only on $A^{k}$, such that
   
{\em (i)}: $A^{k}=\sum\limits_{i=1}^{s} \lambda_i\cdot e_i$ for some $\lambda_i > 0$.

{\em (ii)}: Let $\sF$ be a torsion free coherent sheaf.
Set 
$$D^{t}:=\sum_{i_1< i_2 < \cdots < i_{t}} D_{i_1}\cdot D_{i_2}\cdots D_{i_{t}}\qquad\text{for any }t,$$
and
\begin{equation}\label{orthprojection}
a_i (\sF):= c_1 (\mathcal{F})\cdot D^{n-k-1}\cdot e_i .
\end{equation}
Then the subset $S$ of the set $\bQ$ of rational numbers such that
$$S := \{a_i (\mathcal{F}) |\text{  }\mathcal{F}\subset E, i\in \{1,\cdots s\} \}$$
is bounded from above, 
and the denominator (assumed positive) of all elements of $S$ is uniformly bounded from above.  
Moreover, if $\{ \sF_t \}_t$ is a sequence of coherent subsheaves of $E$ such that 
the set $\{c_1 (\sF_t)\cdot D^{n-k-1}\cdot A^k \}_t $ is bounded from below, 
then $\{c_1 (\sF_t)\cdot D^{n-k-1}\cdot A^k \}_t $ is a finite subset of $\bQ$. 
\end{lemme}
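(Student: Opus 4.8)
The plan is to deduce all three assertions from a suitable choice of the basis $\{e_i\}$, combined with Kobayashi's boundedness theorem for degrees of coherent subsheaves (\cite[Lemma 7.16, Chapter 5]{Kob87}), adapted to the mixed, partly transcendental ``polarization'' $D^{n-k-1}\cdot e_i$. First I would construct the basis. Fix a Kähler form $\omega_A$ in the class $A$. Since $H^{2k}(X,\bQ)$ is dense in $H^{2k}(X,\bR)$, a symmetric perturbation of an auxiliary $\bQ$-basis produces, for any prescribed $\rho>0$, a $\bQ$-basis $\{e_1,\dots,e_s\}$ of $H^{2k}(X,\bQ)$ depending only on $A^{k}$, with $\|e_i-A^k\|<\rho$ and $A^k=\sum_i\lambda_i e_i$ for suitable $\lambda_i>0$: concretely one may take $e_i:=P+\delta\,(u_i-\tfrac1s\sum_j u_j)$ for a $\bQ$-basis $\{u_i\}$ with $A^k\notin\mathrm{span}_\bR\{u_i-u_1\}_{i\ge2}$, a rational $P$ near $A^k$, and a small rational $\delta>0$, since then $\sum_i\tfrac1s e_i=P$ and $\{e_i\}$ is a basis, so for $P$ close to $A^k$ one solves $A^k=\sum_i\lambda_i e_i$ with all $\lambda_i>0$. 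Taking $\rho$ small, the harmonic representative $\eta_i$ of the $(k,k)$-Hodge component $e_i^{(k,k)}$ is $C^\infty$-close to $\omega_A^{\wedge k}$, hence a strongly positive $(k,k)$-form. This gives (i).

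The heart of the argument is the upper bound. Fix a smooth Hermitian metric $h$ on $E$ over its regular locus. Given a coherent subsheaf $\sF\subseteq E$, pass to the saturation: $c_1(\sF)=c_1(\sF^{\mathrm{sat}})-[D]$ for an effective divisor $D$, and $[D]\cdot D^{n-k-1}\cdot e_i\ge0$ (an effective $(1,1)$-class against a limit of strongly positive forms), so it suffices to bound $a_i(\sF^{\mathrm{sat}})$, and $Q:=E/\sF^{\mathrm{sat}}$ is torsion free. Since each $D_j$ is nef, choose Kähler forms $\theta_{j,\epsilon}$ with $[\theta_{j,\epsilon}]\to D_j$ and set $D^{n-k-1}_\epsilon:=\sum_{i_1<\cdots<i_{n-k-1}}\theta_{i_1,\epsilon}\wedge\cdots\wedge\theta_{i_{n-k-1},\epsilon}$, a strongly positive $(n-k-1,n-k-1)$-form. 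Equip $\det Q$ with the singular quotient metric $h_Q$. By the Gauss--Codazzi formula on the locus where $Q$ is locally free, $c_1(\det Q,h_Q)\ge -C\,\rank(Q)\,\omega$ there, with $C$ depending only on the curvature of $(E,h)$, and this extends to an inequality of currents across the codimension $\ge2$ degeneracy locus, where the Chern current of $h_Q$ only gains positive mass; this is exactly the mechanism of \cite[Lemma 7.16, Chapter 5]{Kob87}. Thus $c_1(\det Q,h_Q)+C\,\rank(Q)\,\omega$ is a closed positive $(1,1)$-current; wedging it with the closed positive $(n-1,n-1)$-form $D^{n-k-1}_\epsilon\wedge\eta_i$ and integrating, and noting that only $e_i^{(k,k)}=[\eta_i]$ survives the cup product against classes of type $(1,1)$ and $(n-k-1,n-k-1)$, we get
$$c_1(Q)\cdot[D^{n-k-1}_\epsilon]\cdot e_i \ \ge\ -\,C\,\rank(E)\,\big(\omega\cdot[D^{n-k-1}_\epsilon]\cdot e_i\big).$$
Since $c_1(\sF^{\mathrm{sat}})=c_1(E)-c_1(Q)$, letting $\epsilon\to0$ yields $a_i(\sF^{\mathrm{sat}})\le c_1(E)\cdot D^{n-k-1}\cdot e_i+C\,\rank(E)\,\omega\cdot D^{n-k-1}\cdot e_i=:M_i$, independent of $\sF$, so $S$ is bounded above by $M:=\max_i M_i$.

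The remaining assertions are bookkeeping. The classes $D^{n-k-1}$ and $e_i$ are rational, so $N\,(D^{n-k-1}\cdot e_i)\in H^{2n-2}(X,\bZ)$ for one common $N\in\bZ_{>0}$; as $c_1(\sF)\in H^2(X,\bZ)$ and the cup product $H^2(X,\bZ)\otimes H^{2n-2}(X,\bZ)\to H^{2n}(X,\bZ)=\bZ$ is integral, $a_i(\sF)\in\tfrac1N\bZ$, whence $S\subseteq\tfrac1N\bZ\cap(-\infty,M]$. Finally, if $\{\sF_t\}_t$ satisfies $c_1(\sF_t)\cdot D^{n-k-1}\cdot A^k\ge m_0$ for all $t$, then from $A^k=\sum_i\lambda_i e_i$ and $a_i(\sF_t)\le M$ one obtains, for each index $i_0$, $\lambda_{i_0}a_{i_0}(\sF_t)\ge m_0-M\sum_{i\ne i_0}\lambda_i$, hence a uniform lower bound $a_i(\sF_t)\ge m_1$; so every $a_i(\sF_t)$ lies in the finite set $[m_1,M]\cap\tfrac1N\bZ$, and therefore $c_1(\sF_t)\cdot D^{n-k-1}\cdot A^k=\sum_i\lambda_i a_i(\sF_t)$ takes only finitely many values.

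The only genuinely delicate point is the second step: running Kobayashi's curvature and second-fundamental-form argument for the polarization $D^{n-k-1}\cdot e_i$, which is neither a power of a Kähler class nor even a rational positive $(n-1,n-1)$-class. One has to approximate it by the strongly positive forms furnished by the nefness of the $D_j$, control the codimension $\ge2$ singular locus of the saturated subsheaf via the positive-mass behaviour of the singular quotient metric, and exploit that upon cupping with $c_1$ and the $(n-k-1,n-k-1)$-classes only the Hodge component $e_i^{(k,k)}$ of $e_i$ intervenes — and this component, being close to $\omega_A^{\wedge k}$, does carry a strongly positive representative. Once this uniform upper bound is in hand, the basis construction and the denominator and finiteness statements are routine.
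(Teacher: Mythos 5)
Your proof is correct and takes essentially the same route as the paper: a rational basis chosen in a small neighborhood of $A^{k}$ so that the combination is positive and the $(k,k)$-components admit positive representatives, the Kobayashi second-fundamental-form bound (which the paper simply invokes as \cite[Lemma 7.16, Chapter 5]{Kob87}) for the uniform upper bound on $a_i(\sF)$, and the same denominator-plus-two-sided-bound bookkeeping for the finiteness statement. The only difference is one of exposition: you spell out the saturation reduction, the nef approximation, and the explicit basis construction where the paper merely cites or asserts them.
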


\begin{proof}
We can take a basis $\{e_i\}_{i=1}^{s}$ of $H^{2k}(X,\bQ)$ in a neighborhood of $A^{k}$, 
such that 
$$A^{k}=\sum\limits_{i=1}^{s} \lambda_i\cdot e_i\qquad \text{for some } \lambda_i > 0$$
and $(e_i)^{k,k}$ can be represented by a smooth $(k,k)$-positive form on $X$ (cf. \cite[Chapter 3, Def 1.1]{Dem}
for the definition of $(k,k)$-positivity),
where $(e)^{k,k}$ is the projection of $e$ in $H^{k,k}(X, \mathbb{R})$.
We now check that $\{e_i\}_{i=1}^{s}$ satisfies the lemma.
By construction, $(i)$ is satisfied. 
As for $(ii)$, 
since $e_i$ and $D_i$ are fixed and $c_1 (\sF)\in H^{1,1}(X, \bZ)$,
the denominator of any elements in $S$ is uniformly bounded from above.
Thanks to \eqref{orthprojection}, 
we know that $S$ is bounded from above by using the same argument as in \cite[Lemma 7.16, Chapter 5]{Kob87}.
For the last part of $(ii)$, since
$$c_1 (\sF_t)\cdot D^{n-k-1}\cdot A^k=\sum_i a_i(\sF_t)\cdot \lambda_i ,$$
we obtain that $\{ \sum\limits_i a_i(\sF_t)\cdot \lambda_i \}_{t}$ is uniformly bounded.
Since $\lambda_i > 0$ and $a_i(\sF_t)$ is uniformly upper bounded, we obtain that
$a_i (\sF_t)$ is uniformly bounded.
Combining this with the fact already proved that the denominator of any elements in $S$ is uniformly bounded,
$\{c_1 (\sF_t)\cdot D^{n-k-1}\cdot A^k \}_t $ is thus finite.
The lemma is proved.
\end{proof}

\begin{proof}[Proof of Proposition \ref{lemmastable}]
Let $\{ a_p\}_{p=1}^{+\infty}$ be a decreasing positive sequence converging to $0$.
Let $\sF_p\subset E$ be the first piece of 
the Harder-Narasimhan filtration of $E$ with respect to $(D_1+a_p \cdot A , \cdots , D_{n-1}+ a_p \cdot A)$.
Set $D^k := \sum\limits_{i_1< i_2< \cdots< i_{k}} D_{i_1}\cdot D_{i_2}\cdots D_{i_{k}}$.
Then
$$c_1 (\sF_p)\wedge(D_1+a_p\cdot A)\wedge \cdots \wedge (D_{n-1}+ a_p\cdot A)
=\sum_{k=0}^{n} ( a_p )^{k}\cdot c_1(\sF_p)\wedge D^{n-k-1}\wedge A^{k} .$$
By passing to a subsequence, we can suppose that $\rank \sF_p$ is constant.
To prove Proposition \ref{lemmastable}, 
it is sufficient to prove that for any $k$, 
after passing to a subsequence,
the intersection number 
$\{c_1(\sF_p)\wedge D^{n-k-1}\wedge A^{k}\}_p$ 
is stationary when $p$ is large enough
\footnote{In fact, if $\sF\subset E$ is always the first piece of semi-stable filtration with respect to the polarization
$(D_1+a_p \cdot A , \cdots , D_{n-1}+ a_p \cdot A)$ for a positive sequence $\{ a_p \}_{p=0}^{+\infty}$ converging to $0$, 
and $\sG\subset E$ is always the first piece of semi-stable filtration for another sequence $\{b_p \}_{p=0}^{+\infty}$ converging to $0$,
the stability condition implies that
\begin{equation}
\rank (\sG)\cdot c_1(\sF)\cdot D^{k}\cdot A^{n-k-1}=\rank (\sF)\cdot c_1(\sG)\cdot D^{k}\cdot A^{n-k-1}
\end{equation}
for any $k$.
Therefore $\sG$ has the same slope as $\sF$ with respect to $(D_1+a \cdot A , \cdots , D_{n-1}+ a \cdot A)$
for any $a> 0$.
Then $\sF=\sG$.}.

We prove it by induction on $k$. 
Note first that, by \cite[Lemma 7.16, Chapter 5]{Kob87}, 
the set $\{c_1(\sF_p)\wedge D^{n-k-1}\wedge A^{k}\}_{p, k}$ is upper bounded.
If $k=0$,
since 
$$c_1 (\sF_p)\wedge(D_1+a_p\cdot A)\wedge \cdots \wedge (D_{n-1}+ a_p\cdot A)$$
$$
\geq \frac{\rank (\sF_p)}{\rank E}c_1 (E)\wedge(D_1+a_p\cdot A)\wedge \cdots \wedge (D_{n-1}+ a_p\cdot A) ,$$
and $\lim\limits_{p\rightarrow +\infty} a_p=0$, 
the upper boundedness of $\{c_1(\sF_p)\wedge D^{n-k-1}\wedge A^{k}\}_{p, k}$ implies that
the set $\{c_1 (\sF_p)\wedge D^{n-1}\}_{p=1}^{\infty}$ is bounded from below.
Then $(ii)$ of Lemma \ref{auxi} implies that $\{c_1 (\sF_p)\wedge D^{n-1}\}_{p=1}^{\infty}$ is a finite set.
By the pigeon hole principle, after passing to a subsequence,
the set $\{c_1 (\sF_p)\wedge D^{n-1}\}_{p=1}^{\infty}$
is stationary.
Now we suppose that $\{c_1 (\sF_p)\wedge D^{n-t-1}\wedge A^{t}\}_{p}$ is constant for $p\geq p_0$, where $t\in \{0,\cdots, k-1\}$.
Our aim is to prove that after passing to a subsequence, 
$$\{c_1 (\sF_p)\wedge D^{n-k-1}\wedge A^{k}\}_{p=1}^{\infty}$$ 
is stationary.
By definition, we have
$$c_1 (\sF_p)\wedge(D_1+a_p\cdot A)\wedge \cdots \wedge (D_{n-1}+ a_p\cdot A)$$
$$
\geq c_1 (\sF_{p_0})\wedge(D_1+a_p\cdot A)\wedge \cdots \wedge (D_{n-1}+ a_p\cdot A) \qquad \text{for any }p\geq p_0.$$
Since $\{c_1 (\sF_p)\wedge D^{n-t-1}\wedge A^{t}\}_{p}$ is constant for $p\geq p_0$ when $t\in \{0,\cdots, k-1\}$,
we obtain
\begin{equation}
c_1(\sF_p)\wedge D^{n-k-1}\wedge A^{k} +\sum_{i\geq 1} (a_p) ^i\cdot c_1(\sF_p)\wedge D^{n-k-1-i}\wedge A^{k+i}\geq 
\end{equation}
$$c_1(\sF_{p_0})\wedge D^{n-k-1}\wedge A^{k} +\sum_{i\geq 1} (a_p) ^i\cdot c_1(\sF_{p_0})\wedge D^{n-k-1-i}\wedge A^{k+i}$$
for any $p\geq p_0$.
Therefore
the upper boundedness of $\{c_1(\sF_p)\wedge D^{n-k-1-i}\wedge A^{k+i}\}_{p, i}$ implies that 
$\{c_1 (\sF^p)\wedge D^{n-k-1}\wedge A^{k}\}_{p=1}^{+\infty}$ is lower bounded.
Therefore 
$$\{c_1 (\sF^p)\wedge D^{n-k-1}\wedge A^{k}\}_{p=1}^{+\infty}$$ 
is uniformly bounded.
Using $(ii)$ of Lemma \ref{auxi}, $\{c_1 (\sF^p)\wedge D^{n-k-1}\wedge A^k\}_{p=1}^{+\infty}$ is a finite set.
By the pigeon hole principle, 
after passing to a subsequence,
$$\{c_1 (\sF_p) \wedge D^{n-k-1}\cdot A^{k}\}_{p=1}^{\infty}$$ 
is stationary.
The lemma is proved.
\end{proof}

We recall a regularization lemma proved in \cite[Prop. 3]{Jac10}.

\begin{lemme}\label{lemmaregular}
 Let $E$ be a vector bundle on a compact complex manifold $X$ and 
$\mathcal{F}$ be a subsheaf of $E$ with torsion free quotient.
Then after a finite number of blowups $\pi:\widetilde{X}\rightarrow X$,
there exists a holomorphic subbundle $F$ of $\pi^{*}(E)$ containing $\pi^{*}(\mathcal{F})$
with a holomorphic quotient bundle,
such that $\pi_{*}(F)=\mathcal{F}$ in codimension $1$.
\end{lemme}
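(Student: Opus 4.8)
The plan is to recognize $\mathcal{F}$ as a holomorphic section of a relative Grassmann bundle over the open locus where it is already a subbundle, and then to extend that section over the whole manifold by blowing up.

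First I would locate the locus where $\mathcal{F}$ fails to be a subbundle. Set $Q := E/\mathcal{F}$; by hypothesis $Q$ is torsion free, and since $X$ is smooth, $Q$ is locally free away from its singular locus $Z := \mathrm{Sing}(Q)$, which satisfies $\codim_X Z \geq 2$. On $U := X \setminus Z$ the exact sequence $0 \to \mathcal{F}|_U \to E|_U \to Q|_U \to 0$ is an exact sequence of vector bundles, so $\mathcal{F}|_U$ is a subbundle of $E|_U$ of rank $r := \rank \mathcal{F}$. Consequently $\mathcal{F}|_U$ determines a holomorphic section $s \colon U \to \mathrm{Gr}_r(E)$ of the relative Grassmann bundle $\mathrm{Gr}_r(E) \to X$, i.e. a holomorphic lift of the identity of $U$.

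Next I would regularize this datum. The section $s$ is a meromorphic map $X \dashrightarrow \mathrm{Gr}_r(E)$ whose indeterminacy locus is contained in $Z$. As $\mathrm{Gr}_r(E) \to X$ is projective, elimination of indeterminacy (Hironaka, or: take the closure of the graph of $s$ and dominate that modification by a sequence of blowups) produces a finite sequence of blowups $\pi \colon \widetilde{X} \to X$, with centers contained in $Z$, hence in a subset of codimension $\geq 2$, such that $s \circ \pi$ extends to a holomorphic section $\widetilde{s} \colon \widetilde{X} \to \mathrm{Gr}_r(\pi^* E)$. Pulling back the tautological subbundle along $\widetilde{s}$ yields a holomorphic subbundle $F \subset \pi^* E$ of rank $r$ whose quotient $\pi^* E / F$ is again a vector bundle. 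I expect this elimination-of-indeterminacy step, together with the bookkeeping that keeps all blowup centers inside $Z$ (so that $\pi$ is an isomorphism over $U$), to be the main technical point.

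Finally I would check the two compatibilities. Since $\pi^* E / F$ is locally free, hence torsion free, and the composite $\pi^* \mathcal{F} \to \pi^* E \to \pi^* E / F$ vanishes on the dense open set $\pi^{-1}(U)$, where $F$ agrees with $\pi^*(\mathcal{F}|_U)$ by construction, a local section of $\pi^* E / F$ in the image would vanish on a dense open set and therefore vanish identically; thus the composite is zero and $\pi^* \mathcal{F} \subset F$. For the last assertion, because the centers of $\pi$ lie in $Z$, the map $\pi$ is an isomorphism over $U$, so $(\pi_* F)|_U = \mathcal{F}|_U$; as $\codim_X Z \geq 2$, this gives $\pi_* F = \mathcal{F}$ in codimension $1$, which is exactly the claim.
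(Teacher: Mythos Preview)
The paper does not actually prove this lemma: it merely recalls it as \cite[Prop.~3]{Jac10} and states it without argument. So there is no ``paper's own proof'' to compare against; you have supplied what the paper outsources.

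Your argument is the standard one and is essentially correct. The Grassmannian/elimination-of-indeterminacy approach is exactly how such regularization lemmas are usually proved (and is, in outline, what \cite{Jac10} does). One small point you should make explicit: a holomorphic map defined on a dense open set of a complex manifold is not automatically meromorphic. Here meromorphy of $s$ holds because the section is induced by coherent data on all of $X$: the inclusion $\mathcal{F}\hookrightarrow E$ gives a morphism $\Lambda^{r}\mathcal{F}\to\Lambda^{r}E$ of coherent sheaves whose image is a rank-one subsheaf, and this defines a genuine meromorphic map to $\mathbb{P}(\Lambda^{r}E)$ (hence to $\mathrm{Gr}_{r}(E)$ via the Pl\"ucker embedding) with indeterminacy contained in $Z$. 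Once that is said, Hironaka's elimination of indeterminacy applies with centers over $Z$, and your verification that $\pi^{*}\mathcal{F}\subset F$ (using torsion-freeness of $\pi^{*}E/F$) and that $\pi_{*}F=\mathcal{F}$ in codimension $1$ (since $\pi$ is an isomorphism over $U$ and $\codim_{X}Z\geq 2$) is clean and correct.
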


We need another lemma which is proved in full generality in \cite[Prop. 1.15]{DPS94}. 
For completeness, we give the proof here in an over simplified case, 
but the idea is the same. 
 
\begin{lemme}\label{lemmaestimatexten}
Let $(X,\omega)$ be a compact Kähler manifold.
Let $E$ be an extension of two vector bundles $E_{1}, E_{2}$
$$0\rightarrow E_{1}\rightarrow E \rightarrow E_{2}\rightarrow 0.$$
We suppose that there exist two smooth metrics $h_{1}, h_{2}$ on $E_1$ and $E_2$,
such that
\begin{equation}\label{equationcurvaturesmooth}
\frac{i\Theta_{h_{1}}(E_{1})\wedge\omega^{n-1}}{\omega^{n}}\geq c_{1}\cdot\Id_{E_{1}}\qquad
\text{and}\qquad
\frac{i\Theta_{h_{2}}(E_{2})\wedge\omega^{n-1}}{\omega^{n}}\geq c_{2}\cdot\Id_{E_{2}}
\end{equation}
pointwise.
Then for any $\epsilon > 0$, 
there exists a smooth metric $h_{\epsilon}$ on $E$ such that
$$\frac{i\Theta_{h_{\epsilon}}(E)\wedge\omega^{n-1}}{\omega^{n}}
\geq (\min (c_{1}, c_{2})-\epsilon)\cdot\Id_{E} ,$$
and 
\begin{equation}\label{add4}
\|i\Theta_{h_{\epsilon}}(E) \|_{L^{\infty}}\leq C\cdot (\|i\Theta_{h_{1}}(E_{1})\|_{L^{\infty}} + \|i\Theta_{h_{2}}(E_{2})\|_{L^{\infty}})
\end{equation}
for some uniform constant $C$ independent of $\epsilon$.
\end{lemme}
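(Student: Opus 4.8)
The plan is to equip $E$ with a suitably rescaled diagonal metric. Choose a $C^{\infty}$ splitting $E\cong E_1\oplus E_2$; then the holomorphic structure is $\bar\partial_E=\left(\begin{smallmatrix}\bar\partial_{E_1}&\beta\\0&\bar\partial_{E_2}\end{smallmatrix}\right)$ for a fixed $\bar\partial$-closed form $\beta\in C^{\infty}(X,\Lambda^{0,1}T_X^*\otimes\Hom(E_2,E_1))$, and the splitting is orthogonal for $h:=h_1\oplus h_2$. I will take on $E$, for a large constant $\lambda=\lambda(\epsilon)>1$ to be chosen at the end, the metric $h_\lambda:=h_1\oplus\lambda h_2$. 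The point is that rescaling $h_2$ by a constant changes neither the Chern connection nor the curvature of $(E_2,h_2)$, but divides the second fundamental form of the extension by $\lambda$.

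First I would recall the Gauss--Codazzi formula for the Chern curvature of an extension (see \cite[Prop.~1.15]{DPS94}, or \cite[Ch.~V]{Dem}): in the orthogonal splitting $E=E_1\oplus E_2$,
\[
i\Theta_{h_\lambda}(E)=
\begin{pmatrix}
i\Theta_{h_1}(E_1)+\tau_1 & \gamma\\[2pt]
\gamma^{\ast_\lambda} & i\Theta_{h_2}(E_2)+\tau_2
\end{pmatrix},
\]
where $\tau_1,\tau_2,\gamma$ are the second fundamental form terms: $\tau_j$ is of the form $\pm i\,\beta^{\ast_\lambda}\wedge\beta$ or $\pm i\,\beta\wedge\beta^{\ast_\lambda}$, $\gamma=iD'\beta$, with $\beta^{\ast_\lambda}$ the $h_\lambda$-adjoint of $\beta$ and $D'$ the $(1,0)$-part of the Chern connection on $\Hom(E_2,E_1)$. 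Since $\Theta_{h_1}(E_1)$, $\Theta_{h_2}(E_2)$, $\beta$ and $D'\beta$ are independent of $\lambda$ and $\beta^{\ast_\lambda}=\lambda^{-1}\beta^{\ast}$ (the $(h_1,h_2)$-adjoint), contracting with $\omega^{n-1}/\omega^{n}$ gives: the endomorphisms $\frac{\tau_j\wedge\omega^{n-1}}{\omega^{n}}$ have operator norm $\le C\lambda^{-1}$ for a constant $C$ depending only on $X$, $\omega$, $h_1$, $h_2$, $\beta$; the $(1,2)$-block contracts to a $\lambda$-independent endomorphism $\psi:=\frac{\gamma\wedge\omega^{n-1}}{\omega^{n}}\in\Hom(E_2,E_1)$; and the $(2,1)$-block to its $h_\lambda$-adjoint $\psi^{\ast_\lambda}=\lambda^{-1}\psi^{\ast}$.

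It then remains to choose $\lambda$. Write $A_\lambda:=\frac{i\Theta_{h_\lambda}(E)\wedge\omega^{n-1}}{\omega^{n}}$. Using the above, for $u=u_1+u_2\in E_1\oplus E_2$, and since $\lambda\langle\psi^{\ast_\lambda}u_1,u_2\rangle_{h_2}=\overline{\langle\psi u_2,u_1\rangle_{h_1}}$ and $|u|^2_{h_\lambda}=|u_1|^2_{h_1}+\lambda|u_2|^2_{h_2}$, one computes
\[
\langle A_\lambda u,u\rangle_{h_\lambda}\ \geq\ (c_1-C\lambda^{-1})|u_1|^2_{h_1}+\lambda(c_2-C\lambda^{-1})|u_2|^2_{h_2}-2\|\psi\|_{L^{\infty}}|u_1|_{h_1}|u_2|_{h_2}.
\]
Subtracting $(c-\epsilon)|u|^2_{h_\lambda}$, with $c:=\min(c_1,c_2)$, and using $c_1,c_2\geq c$, one reduces to checking that the quadratic form $(t_1,t_2)\mapsto(\epsilon-C\lambda^{-1})t_1^2+(\lambda\epsilon-C)t_2^2-2\|\psi\|_{L^{\infty}}t_1 t_2$ is positive semidefinite; this holds as soon as $\lambda\epsilon-C\geq0$ and $(\epsilon-C\lambda^{-1})(\lambda\epsilon-C)\geq\|\psi\|^2_{L^{\infty}}$, and since the left-hand side of the last inequality equals $\lambda\epsilon^2-2C\epsilon+C^2\lambda^{-1}$, both conditions are met once $\lambda$ is large enough in terms of $\epsilon$, $C$ and $\|\psi\|_{L^{\infty}}$. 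Fixing such a $\lambda=\lambda(\epsilon)$ and setting $h_\epsilon:=h_{\lambda(\epsilon)}$ yields $A_\epsilon\geq(\min(c_1,c_2)-\epsilon)\,\Id_E$. Finally, \eqref{add4} is read off the block formula: the diagonal entries of $i\Theta_{h_\epsilon}(E)$ are bounded by $\|i\Theta_{h_1}(E_1)\|_{L^{\infty}}+\|i\Theta_{h_2}(E_2)\|_{L^{\infty}}$ plus $\lambda^{-1}$ times fixed norms of $\beta$-terms, and the off-diagonal entries by $\|D'\beta\|_{L^{\infty}}$; since $\lambda(\epsilon)\geq1$, these extra quantities are bounded independently of $\epsilon$, so \eqref{add4} holds with $C$ independent of $\epsilon$.

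I expect the main obstacle to be the last positivity step, not the curvature computation. The off-diagonal contribution $\psi$ is a \emph{fixed} tensor and does not become small, so $A_\lambda$ cannot be made block-diagonal. What saves the argument is that in $\langle A_\lambda u,u\rangle_{h_\lambda}$ the cross term is weighted by $|u_1|_{h_1}|u_2|_{h_2}$ while the $E_2$-diagonal term carries the factor $\lambda$ -- equivalently, in an $h_\lambda$-orthonormal frame the off-diagonal block has size $O(\lambda^{-1/2})$ -- so a single large choice of $\lambda$ simultaneously absorbs the $O(\lambda^{-1})$ losses on both diagonal blocks and the cross term. The remaining details are the standard extension-class bookkeeping and elementary linear algebra.
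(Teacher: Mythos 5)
Your proof is correct and is essentially the paper's argument: the rescaled diagonal metric $h_1\oplus\lambda h_2$ in a fixed smooth splitting is, up to a harmless global constant factor, exactly the pullback $\varphi_s^{*}(h_1\oplus h_2)$ with $s=\lambda^{-1/2}$ that the paper uses, since $\varphi_s$ acts as $\mathrm{diag}(s,1)$ in such a splitting. The only difference is that you carry out explicitly the Gauss--Codazzi computation and the final $2\times 2$ positivity check that the paper delegates to \cite[Ch.~V, Prop.~14.9]{Dem}.
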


\begin{proof}
Let $[E]\in H^{1}(X, \Hom (E_{2}, E_{1}))$ be the element representing $E$ 
in the extension group.
Let $E_{s}$ be another extension of $E_{1}$ and $E_{2}$,
such that $[E_{s}]=s\cdot [E]$, where $s\in \mathbb{C}^{*}$.
Then there exists an isomorphism between these two vector bundles
(cf. \cite[Remark 14.10, Chapter V]{Dem}).
We denote the isomorphism by
$$\varphi_{s}: E\rightarrow E_{s} .$$
Thanks to \eqref{equationcurvaturesmooth}, 
if $| s |$ is small enough with respect to $\epsilon$, 
we can find a smooth metric $h_{s}$ on $E_{s}$ satisfying 
\begin{equation}\label{equation55}
\frac{i\Theta_{h_{s}}(E_{s})\wedge\omega^{n-1}}{\omega^{n}}\geq 
(\min (c_{1}, c_{2})-\epsilon)\cdot\Id_{E_{s}}
\end{equation}
and 
\begin{equation}\label{addvv1}
\|i\Theta_{h_{s}}(E_s) \|_{L^{\infty}}\leq C\cdot (\|i\Theta_{h_{1}}(E_{1})\|_{L^{\infty}} + \|i\Theta_{h_{2}}(E_{2})\|_{L^{\infty}})
\end{equation}
for some uniform constant $C$
(cf. \cite[Prop 14.9, Chapter V]{Dem}).
Let $h=\varphi_{s}^{*} (h_{s})$ be the induced metric on $E$. 
Then for any $\alpha\in E$,
\begin{equation}\label{addvv2}
\frac{\langle i\Theta_{h}(E)\alpha, \alpha \rangle_{h}}{\langle\alpha, \alpha\rangle_{h}}
=\frac{\langle \varphi_{s}^{-1}\circ i\Theta_{h_{s}}(E_{s})\varphi_{s}(\alpha), \alpha \rangle_{h}}
{\langle\alpha, \alpha\rangle_{h}}
\end{equation}
$$=\frac{\langle i\Theta_{h_{s}}(E_{s})\varphi_{s}(\alpha), \varphi_{s}(\alpha) \rangle_{h_{s}}}
{\langle\varphi_{s}(\alpha), \varphi_{s}(\alpha)\rangle_{h_{s}}} .$$ 
Combining this with \eqref{equation55}, we get
$$\frac{\langle i\Theta_{h}(E)\alpha, \alpha \rangle_{h} \wedge\omega^{n-1}}
{\langle\alpha, \alpha\rangle_{h}\cdot\omega^{n}}
\geq (\min (c_{1}, c_{2})-\epsilon)\cdot\Id_{E}.$$
Moreover, \eqref{addvv2} implies also \eqref{add4}.
The lemma is proved.
\end{proof}

We recall the following well-known equality in K\"ahler geometry.

\begin{prop}\label{ricciequality}
Let $(X,\omega_X)$ be a K\"ahler manifold of dimension $n$, $\curv$ be the curvature tensor and $\Ricci$ be the Ricci tensor
(cf. the definition of \cite[Section 7.5]{Zhe}).
Let $i \Theta_{\omega_X} (T_X)$ be the curvature of $T_X$ induced by $\omega_X$.
We have
$$\langle \frac{i\Theta_{\omega_{X}}(T_{X})\wedge \omega_{X}^{n-1}}{\omega_{X}^{n}}u, v\rangle_{\omega_X}
=\Ricci (u, \overline{v}) .$$
\end{prop}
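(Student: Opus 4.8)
The plan is to treat this as a pointwise identity: it suffices to verify it at an arbitrary point $p\in X$, and for that I would pass to local holomorphic coordinates $(z^1,\dots,z^n)$ centered at $p$ that are normal for $\omega_X$ at $p$. Writing $\omega_X=i\sum_{j,k}g_{j\bar k}\,dz^j\wedge d\bar z^k$, normality means $g_{j\bar k}(p)=\delta_{jk}$ and $dg_{j\bar k}(p)=0$, and such coordinates exist precisely because $\omega_X$ is K\"ahler. The single conceptual ingredient I would use is the classical fact (see \cite[Section 7.5]{Zhe}) that on a K\"ahler manifold the Chern connection of $(T_X,\omega_X)$ coincides with the Levi--Civita connection; consequently $i\Theta_{\omega_X}(T_X)$, regarded as an $\mathrm{End}(T_X)$-valued $(1,1)$-form, is exactly the Riemann curvature tensor $\curv$, with components $\curv_{j\bar k l\bar m}=-\partial_j\partial_{\bar k}g_{l\bar m}(p)$ in these coordinates.

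Next I would make the operator $\,\cdot\wedge\omega_X^{n-1}/\omega_X^{n}$ explicit. For a $(1,1)$-form $\beta=i\sum_{j,k}\beta_{j\bar k}\,dz^j\wedge d\bar z^k$ there is a pointwise identity expressing $\beta\wedge\omega_X^{n-1}/\omega_X^{n}$ as the metric trace $\sum_{j,k}g^{j\bar k}\beta_{j\bar k}$ of $\beta$ over its two indices (with the normalizations of \cite[Section 7.5]{Zhe}); applying this entry by entry to the $\mathrm{End}(T_X)$-valued form $i\Theta_{\omega_X}(T_X)$ shows that the endomorphism $\frac{i\Theta_{\omega_X}(T_X)\wedge\omega_X^{n-1}}{\omega_X^{n}}$ is obtained from $\curv$ by contracting its two form indices against $g$. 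Evaluating at $p$, where $g_{j\bar k}(p)=\delta_{jk}$ and the Hermitian form $\langle\cdot,\cdot\rangle_{\omega_X}$ is the standard one, this gives
$$\Big\langle \frac{i\Theta_{\omega_X}(T_X)\wedge\omega_X^{n-1}}{\omega_X^{n}}\,u,\;v\Big\rangle_{\omega_X}(p)=\sum_{j}\ \sum_{l,m}\curv_{j\bar j l\bar m}\,u^{l}\,\overline{v^{m}}$$
for $u=\sum u^{l}\partial_{l}$ and $v=\sum v^{m}\partial_{m}$.

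Finally I would identify the right-hand side with the Ricci tensor. Using the K\"ahler symmetries of the curvature tensor, $\curv_{j\bar k l\bar m}=\curv_{l\bar k j\bar m}=\curv_{j\bar m l\bar k}$ (equivalently, $\partial_{j}\partial_{\bar k}g_{l\bar m}=\partial_{l}\partial_{\bar m}g_{j\bar k}$ at $p$), one gets $\sum_{j}\curv_{j\bar j l\bar m}(p)=-\partial_{l}\partial_{\bar m}\log\det(g_{j\bar k})(p)$, which is exactly the component $\Ricci_{l\bar m}(p)$ of the Ricci tensor in the convention of \cite[Section 7.5]{Zhe}; hence the displayed quantity equals $\Ricci(u,\bar v)$ at $p$, and since $p$ was arbitrary the equality holds everywhere. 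I do not expect a genuine obstacle here --- the statement is a standard identity --- and the only point requiring care is to keep the normalizing constants consistent among the contraction operator $\cdot\wedge\omega_X^{n-1}/\omega_X^{n}$, the curvature tensor $\curv$, and the Ricci tensor $\Ricci$, exactly as they are fixed in \cite[Section 7.5]{Zhe}; once that is pinned down the proof is the one-line computation above in normal coordinates.
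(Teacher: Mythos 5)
Your proof is correct and follows essentially the same route as the paper: both compute the endomorphism $\frac{i\Theta_{\omega_X}(T_X)\wedge\omega_X^{n-1}}{\omega_X^n}$ as the trace of the curvature tensor over one pair of indices and then invoke the K\"ahler symmetry of $\curv$ (your coordinate identity $\partial_j\partial_{\bar k}g_{l\bar m}=\partial_l\partial_{\bar m}g_{j\bar k}$ is exactly the first Bianchi identity used in the paper) to exchange that pair with the arguments $(u,\bar v)$ and recognize the Ricci tensor. The only difference is presentational --- normal coordinates versus an orthonormal frame --- and, like the paper, you correctly flag that the normalizing constant in the contraction $\cdot\wedge\omega_X^{n-1}/\omega_X^n$ must be fixed consistently with the chosen conventions.
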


\begin{proof}
Let $\{ e_i \}_{i=1}^{n}$ be an orthonormal basis of $T_X$ with respect to $\omega_{X}$.
By definition, we have
$$\langle \frac{i\Theta_{\omega_{X}}(T_{X})\wedge \omega_{X}^{n-1}}{\omega_{X}^{n}} u, v\rangle_{\omega_X}
=\sum\limits_{1 \leq i\leq n} \langle i\Theta_{\omega_X} (T_X) u, v\rangle (e_i, \overline{e}_i)=
\sum\limits_{1 \leq i\leq n} \curv (e_i, \overline{e}_i, u, \overline{v}).$$
By definition of the Ricci curvature (cf. \cite[Page 180]{Zhe}), we have
$$\Ricci (u, \overline{v}) =\sum\limits_{1 \leq i\leq n} \curv ( u, \overline{v}, e_i, \overline{e}_i).$$
Combining this with the First Bianchi equality
$$\sum\limits_{1 \leq i\leq n} \curv (e_i, \overline{e}_i, u, \overline{v})
=\sum\limits_{1 \leq i\leq n} \curv ( u, \overline{v}, e_i, \overline{e}_i) ,$$
the proposition is proved.
\end{proof}

\section{Main theorem }

We first prove Theorem \ref{mainthm} in the case when $-K_X$ is nef.

\begin{thm}\label{mainthmantinef}
Let $(X, \omega)$ be a compact $n$-dimensional 
Kähler manifold with nef anticanonical bundle.
Let 
\begin{equation}\label{mainfiltraion}
 0= \mathcal{E}_{0}\subset \mathcal{E}_{1}\subset\cdots\subset \mathcal{E}_{s}=T_{X}
\end{equation}
be a filtration of torsion-free subsheaves such that $\mathcal{E}_{i+1}/\mathcal{E}_{i}$
is an $\omega$-stable torsion-free subsheaf of $T_{X}/\mathcal{E}_{i}$ of maximal slope
\footnote{Using Proposition \ref{addstable}, 
one can prove the existence of such a filtration by a standard argument \cite{HN}.}.
Let 
$$\mu(\mathcal{E}_{i+1}/\mathcal{E}_{i})=\frac{1}{\rank (\mathcal{E}_{i+1}/\mathcal{E}_{i} )}
\int_{X}c_{1}(\mathcal{E}_{i+1}/\mathcal{E}_{i})\wedge\omega^{n-1}$$
be the slope of $\mathcal{E}_{i+1}/\mathcal{E}_{i}$ with respect to $\omega^{n-1}$.
Then 
$$\mu(\mathcal{E}_{i+1}/\mathcal{E}_{i})\geq 0 \qquad\text{for all } i.$$
\end{thm}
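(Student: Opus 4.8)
The plan is to reduce Theorem \ref{mainthmantinef}, via the defining property of the filtration, to the statement that every torsion-free \emph{quotient} sheaf of $T_X$ has nonnegative slope with respect to $\omega^{n-1}$, and then to prove the latter by a pointwise curvature estimate combining Proposition \ref{ricciequality} with the Calabi--Yau theorem. As in the standard construction of the Harder--Narasimhan filtration each $\mathcal{E}_i$ is saturated in $T_X$, so $T_X/\mathcal{E}_i$ is torsion free; and since $\mathcal{E}_{i+1}/\mathcal{E}_i$ has maximal slope among nonzero subsheaves of $T_X/\mathcal{E}_i$ while $T_X/\mathcal{E}_i$ is itself such a subsheaf, one has $\mu(\mathcal{E}_{i+1}/\mathcal{E}_i)\ge\mu(T_X/\mathcal{E}_i)$. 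Hence it is enough to show $\int_X c_1(\mathcal{Q})\wedge\omega^{n-1}\ge 0$ for every torsion-free quotient $\mathcal{Q}$ of $T_X$ (the case $\mathcal{Q}=T_X$ being immediate from nefness of $-K_X$).

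Fix $\epsilon>0$. Since $-K_X$ is nef there is a smooth Hermitian metric on $-K_X$ with curvature form $\rho_\epsilon\ge-\epsilon\,\omega$, where $\rho_\epsilon$ represents $c_1(-K_X)=c_1(X)$; by the Calabi--Yau theorem there is a K\"ahler metric $\omega_\epsilon\in[\omega]$ with $\Ricci(\omega_\epsilon)=\rho_\epsilon\ge-\epsilon\,\omega$. Because $[\omega_\epsilon]=[\omega]$, slopes are unchanged if computed with respect to $\omega_\epsilon^{n-1}$. Now let $\mathcal{Q}$ be a torsion-free quotient of $T_X$ of rank $q$, locally free on $U:=X\setminus Z$ with $\codim_X Z\ge 2$, and equip $\mathcal{Q}|_U$ with the quotient metric induced by $(T_X,\omega_\epsilon)$. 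Using in turn that a quotient bundle has curvature at least that of the ambient bundle, then Proposition \ref{ricciequality}, then $\Ricci(\omega_\epsilon)\ge-\epsilon\,\omega$, one gets, pointwise on $U$, for an $\omega_\epsilon$-orthonormal frame $e_1,\dots,e_n$ of $T_X$ whose first $q$ vectors induce an orthonormal frame of $\mathcal{Q}$,
$$\frac{i\Theta(\det\mathcal{Q})\wedge\omega_\epsilon^{n-1}}{\omega_\epsilon^{n}}\ \ge\ \sum_{\alpha=1}^{q}\Ricci(\omega_\epsilon)(e_\alpha,\overline{e}_\alpha)\ \ge\ -\epsilon\sum_{j=1}^{n}\omega(e_j,\overline{e}_j)\ =\ -\epsilon\operatorname{tr}_{\omega_\epsilon}\omega .$$
Integrating against $\omega_\epsilon^{n}/n!$ over $U$ — using that $Z$ is negligible, that $\det\mathcal{Q}$ extends to a line bundle on $X$ with first Chern class $c_1(\mathcal{Q})$, and that for the induced metric the left integral equals the topological number $\tfrac{2\pi}{n!}\,c_1(\mathcal{Q})\wedge[\omega_\epsilon]^{n-1}=\tfrac{2\pi}{n!}\,c_1(\mathcal{Q})\wedge[\omega]^{n-1}$ — one obtains $\tfrac{2\pi}{n!}\int_X c_1(\mathcal{Q})\wedge\omega^{n-1}\ge-\tfrac{\epsilon}{(n-1)!}\int_X\omega^{n}$. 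Letting $\epsilon\to 0$ yields $\int_X c_1(\mathcal{Q})\wedge\omega^{n-1}\ge 0$, which finishes the proof.

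The one genuinely delicate point is the identification just used: the quotient metric on $\det\mathcal{Q}$ degenerates along $Z$, so one must know that the integral over $U$ of its curvature still computes $2\pi\,c_1(\mathcal{Q})\wedge[\omega]^{n-1}$. This is standard in the theory of (admissible) Hermitian metrics on torsion-free and reflexive sheaves; alternatively one may first invoke Lemma \ref{lemmaregular} to replace $\mathcal{Q}$ by a genuine vector-bundle quotient on a modification $\pi\colon\widetilde X\to X$ and run the same computation there, working with the nef classes $\pi^*\omega_\epsilon$ suitably regularized. Apart from this point the argument is a direct computation; in particular the $\epsilon$-uniformity is harmless because the constant $\int_X\omega^{n}$ is independent of $\epsilon$. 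Finally, it is essential to move the metric within its K\"ahler class via Calabi--Yau: with a fixed background metric on $T_X$ the same computation would produce only a uniform negative lower bound for $\mu(\mathcal{Q})$, not the sharp bound $0$.
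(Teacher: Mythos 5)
Your proposal is correct and follows essentially the same route as the paper: reduce via the maximal-slope property to the non-negativity of $\int_X c_1(T_X/\mathcal{E}_i)\wedge\omega^{n-1}$, produce $\omega_\epsilon\in[\omega]$ with almost non-negative Ricci curvature from the nefness of $-K_X$ via Calabi--Yau, use curvature monotonicity for quotient metrics together with Proposition \ref{ricciequality}, integrate, and let $\epsilon\to 0$, handling the singular locus of the filtration by the regularization Lemma \ref{lemmaregular}. The "delicate point" you flag is exactly what the paper's Case~2 addresses by passing to the modification $\pi\colon\widetilde X\to X$.
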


\begin{proof}

We first consider a simplified case. 

{\em Case 1 : \eqref{mainfiltraion} is regular, i.e., 
all $\mathcal{E}_{i}$, $\mathcal{E}_{i+1}/\mathcal{E}_{i}$ are vector bundles.}   

By the stability condition, to prove the theorem, it is sufficient to prove that 
\begin{equation}\label{auxilipositive}
\int_{X}c_{1}(T_X / \mathcal{E}_{i})\wedge\omega^{n-1}\geq 0 \qquad\text{for any } i.
\end{equation}
Thanks to the nefness of $-K_{X}$, 
for any $\epsilon > 0$, 
there exists a Kähler metric $\omega_{\epsilon}$ in the same class of $\omega$
such that (cf. the proof of \cite[Thm. 1.1]{DPS93})
\begin{equation}\label{equation56}
\Ricci_{\omega_{\epsilon}}\geq -\epsilon \omega_{\epsilon},
\end{equation}
where $\Ricci_{\omega_{\epsilon}}$ is the Ricci curvature with respect to the metric $\omega_{\epsilon}$.
Thanks to Proposition \ref{ricciequality},
we have
$$\langle \frac{i\Theta_{\omega_{\epsilon}}(T_{X})\wedge \omega_{\epsilon}^{n-1}}{\omega_{\epsilon}^{n}}\alpha, \alpha\rangle_{\omega_{\epsilon}}
=\Ricci_{\omega_\epsilon} (\alpha, \overline{\alpha}).$$
Then \eqref{equation56} implies a pointwise estimate
\begin{equation}\label{equation57}
\frac{i\Theta_{\omega_{\epsilon}}(T_{X})\wedge \omega_{\epsilon}^{n-1}}{\omega_{\epsilon}^{n}}
\geq -\epsilon\cdot \Id_{T_{X}} .
\end{equation}
Taking the induced metric on $T_{X}/ \mathcal{E}_{i}$ 
(we also denote it by $\omega_{\epsilon}$),
we get (cf. \cite[Chapter V]{Dem})
\begin{equation}\label{equation58}
\frac{i\Theta_{\omega_{\epsilon}}(T_{X}/\mathcal{E}_{i})
\wedge \omega_{\epsilon}^{n-1}}{\omega_{\epsilon}^{n}}
\geq -\epsilon\cdot\Id_{T_{X}/\mathcal{E}_{i}}. 
\end{equation}
Therefore
$$\int_{X}c_{1}(T_{X}/\mathcal{E}_{i})\wedge \omega_{\epsilon}^{n-1}\geq -\rank (T_{X}/\mathcal{E}_{i})\cdot \epsilon\int_X \omega_{\epsilon}^n.$$
Combining this with the fact that $[\omega_{\epsilon}]=[\omega]$, 
we get
\begin{equation}\label{equation59}
\int_{X}c_{1}(T_{X}/ \mathcal{E}_{i})\wedge\omega^{n-1}
=\int_{X}c_{1}(T_{X}/ \mathcal{E}_{i})\wedge\omega_{\epsilon}^{n-1}\geq -C\epsilon,
\end{equation}
for some constant $C$.
Letting $\epsilon\rightarrow 0$, \eqref{auxilipositive} is proved.

{\em Case 2: The general case} 

By Lemma \ref{lemmaregular}, 
there exists a desingularization $\pi: \widetilde{X}\rightarrow X$, such that 
$\pi^{*}(T_{X})$ admits a filtration:
\begin{equation}\label{regularfiltr}
0\subset E_{1}\subset E_{2}\subset\cdots \subset \pi^{*}(T_{X}) ,
\end{equation}
where $E_{i}, E_{i}/E_{i-1}$ are vector bundles 
and $\pi_{*}(E_{i})=\mathcal{E}_{i}$ outside 
an analytic subset of codimension at least 2.
Let $\widetilde{\mu}$ be the slope with respect to $\pi^{*}(\omega)$.
Then 
\begin{equation}\label{equation60}
\widetilde{\mu} (E_{i}/E_{i-1}) =\mu (\mathcal{E}_{i}/\mathcal{E}_{i-1}) 
\end{equation}
(cf. \cite[Lemma 2]{Jac10} ),
and $E_{i}/E_{i-1}$ is a $\pi^{*}(\omega)$-stable subsheaf of $\pi^{*}(T_{X})/E_{i-1}$ of maximal slope

We now prove that 
$\widetilde{\mu} (E_{i}/E_{i-1})\geq 0$.
Thanks to \eqref{equation57}, for any $\epsilon> 0$ small enough, 
we have 
$$\frac{i\Theta_{\pi^{*}\omega_{\epsilon}}(\pi^{*}(T_{X}))
\wedge (\pi^{*}\omega_{\epsilon})^{n-1}}{(\pi^{*}\omega_{\epsilon})^{n}}
\geq -\epsilon\cdot \Id_{\pi^{*}(T_{X})} ,$$
which implies that
\begin{equation}\label{equationestimationcurv}
\frac{i\Theta_{\pi^{*}\omega_{\epsilon}}(\pi^{*}(T_{X})/E_{i})\wedge 
(\pi^{*}\omega_{\epsilon})^{n-1}}{(\pi^{*}\omega_{\epsilon})^{n}}
\geq -\epsilon\cdot \Id_{\pi^{*}(T_{X})/E_{i}} .
\end{equation}
By the same argument as in Case 1, 
\eqref{equationestimationcurv} and the maximal slope condition of 
$E_{i+1}/E_{i}$ in $\pi^{*}(T_{X})/E_{i}$
implies that
$$\widetilde{\mu} (E_{i+1}/E_{i})=\frac{1}{\rank (E_{i+1}/E_{i} )}
\int_{\widetilde{X}}c_{1}(E_{i+1}/E_{i})\wedge \pi^{*}\omega^{n-1}\geq -C\epsilon$$
for some constant $C$ independent of $\epsilon$.
Letting $\epsilon\rightarrow 0$, 
we get $\widetilde{\mu} (E_{i+1}/E_{i})\geq 0$.
Combining this with \eqref{equation60}, the theorem is proved.
\end{proof}

We now prove Theorem \ref{mainthm} in the case when $K_X$ is nef.

\begin{thm}\label{canonicalcase}
Let $(X, \omega)$ be a compact Kähler manifold with nef canonical bundle.
Let 
\begin{equation}\label{mainfiltrationdualcase}
0\subset \mathcal{E}_{0}\subset \mathcal{E}_{1}\subset\cdots\subset \mathcal{E}_{s}=\Omega_{X}^{1}
\end{equation}
be a filtration of torsion-free subsheaves such that $\mathcal{E}_{i+1}/\mathcal{E}_{i}$
is an $\omega$-stable torsion-free subsheaf of $T_{X}/\mathcal{E}_{i}$ of maximal slope.
Then 
$$\int_{X}c_{1}(\mathcal{E}_{i+1}/\mathcal{E}_{i})\wedge\omega^{n-1}\geq 0 \qquad\text{for all } i. $$
\end{thm}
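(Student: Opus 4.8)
The plan is to repeat, in dual form, the proof of Theorem~\ref{mainthmantinef}. Where the nefness of $-K_X$ produced a \emph{lower} bound for the Ricci curvature, hence for the curvature of $T_X$, the nefness of $K_X$ produces an \emph{upper} bound for the Ricci curvature, hence a \emph{lower} bound for the curvature of $\Omega^1_X=T_X^*$; after this sign reversal the argument carries over essentially verbatim, with $T_X$ replaced by $\Omega^1_X$ and $T_X/\mathcal{E}_i$ by the quotient $\Omega^1_X/\mathcal{E}_i$.

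First I would reduce to the case where the filtration \eqref{mainfiltrationdualcase} is regular, i.e. all the $\mathcal{E}_i$ and all the quotients $\mathcal{E}_{i+1}/\mathcal{E}_i$ are vector bundles, exactly as in Case~2 of the proof of Theorem~\ref{mainthmantinef}. Lemma~\ref{lemmaregular} produces a composition of blow-ups $\pi\colon\widetilde{X}\to X$ and a regular filtration $0\subset E_1\subset\cdots\subset\pi^*(\Omega^1_X)$ with $\pi_*(E_i)=\mathcal{E}_i$ in codimension~$1$; then $\widetilde{\mu}(E_{i+1}/E_i)=\mu(\mathcal{E}_{i+1}/\mathcal{E}_i)$ by \cite[Lemma~2]{Jac10}, $E_{i+1}/E_i$ retains the maximal-slope property inside $\pi^*(\Omega^1_X)/E_i$ with respect to $\pi^*\omega$, and the curvature estimate below, once proved on $X$, pulls back to $\widetilde{X}$ via $\pi^*$. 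So it is enough to treat the regular case on $X$; and since $\Omega^1_X/\mathcal{E}_i$ is itself a subsheaf of $\Omega^1_X/\mathcal{E}_i$, the maximal-slope hypothesis gives $\mu(\mathcal{E}_{i+1}/\mathcal{E}_i)\ge\mu(\Omega^1_X/\mathcal{E}_i)$, so the theorem reduces to the inequality
$$\int_X c_1(\Omega^1_X/\mathcal{E}_i)\wedge\omega^{n-1}\ge 0\qquad\text{for all }i.$$

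For the regular case: since $K_X$ is nef, applying the argument of \cite[Thm.~1.1]{DPS93} to the nef class $c_1(K_X)$ (in place of $c_1(-K_X)$) gives, for every $\epsilon>0$, a K\"ahler metric $\omega_\epsilon$ in the class $[\omega]$ with $\Ricci_{\omega_\epsilon}\le\epsilon\,\omega_\epsilon$. By Proposition~\ref{ricciequality} this is the pointwise estimate $\frac{i\Theta_{\omega_\epsilon}(T_X)\wedge\omega_\epsilon^{n-1}}{\omega_\epsilon^{n}}\le\epsilon\cdot\Id_{T_X}$, and passing to the dual bundle (whose curvature is minus the transpose of that of $T_X$) yields $\frac{i\Theta_{\omega_\epsilon}(\Omega^1_X)\wedge\omega_\epsilon^{n-1}}{\omega_\epsilon^{n}}\ge-\epsilon\cdot\Id_{\Omega^1_X}$. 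Giving $\Omega^1_X/\mathcal{E}_i$ the induced quotient metric, curvature only increases on quotients (cf.\ \cite[Chapter~V]{Dem}, just as in the passage from \eqref{equation57} to \eqref{equation58}), so the same lower bound holds for $i\Theta_{\omega_\epsilon}(\Omega^1_X/\mathcal{E}_i)$; integrating against $\omega_\epsilon^{n-1}$ and using $[\omega_\epsilon]=[\omega]$ gives $\int_X c_1(\Omega^1_X/\mathcal{E}_i)\wedge\omega^{n-1}\ge-C\epsilon$ for a constant $C$ independent of $\epsilon$. Letting $\epsilon\to0$ proves the displayed inequality, hence the theorem.

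The argument uses no analytic ingredient beyond those of Theorem~\ref{mainthmantinef}; the only delicate points — the ``main obstacle'', such as it is — are getting the signs right under dualization (an upper bound for $i\Theta(T_X)$ must become a lower bound for $i\Theta(\Omega^1_X)$, and that lower bound must survive the passage to the quotients $\Omega^1_X/\mathcal{E}_i$), and verifying that the Monge--Amp\`ere solution of \cite[Thm.~1.1]{DPS93}, applied to $c_1(K_X)$, can indeed be taken in the fixed class $[\omega]$ with the stated Ricci upper bound.
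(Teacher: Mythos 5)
Your proposal is correct and follows essentially the same route as the paper: reduce to the regular case via Lemma~\ref{lemmaregular}, produce $\omega_\epsilon\in[\omega]$ with $\Ricci_{\omega_\epsilon}\le\epsilon\,\omega_\epsilon$, dualize to get $i\Theta_{\omega_\epsilon}(\Omega^1_X)\wedge\omega_\epsilon^{n-1}/\omega_\epsilon^n\ge-\epsilon\,\Id$, and pass to the quotients. The one point you flag as delicate is settled in the paper by writing the Aubin--Yau equation $(\omega+i\partial\overline{\partial}\varphi_\epsilon)^n=\omega^n e^{-\psi_\epsilon-\epsilon\varphi_\epsilon}$ explicitly (solvable unconditionally because of the $-\epsilon\varphi_\epsilon$ term), which yields $\Ricci_{\omega_\epsilon}\le\epsilon\,\omega_\epsilon$ in the fixed class $[\omega]$.
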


\begin{proof}

The proof is almost the same as Theorem \ref{mainthmantinef}.
First of all, since $K_X$ is nef, for any $\epsilon > 0$,
there exists a smooth function $\psi_{\epsilon}$ on $X$, such that
$$\Ricci_{\omega}+ i \partial\overline{\partial}\psi_{\epsilon}\leq \epsilon\omega .$$
By solving the Monge-Ampère equation
\begin{equation}\label{equation62}
(\omega+i \partial\overline{\partial}\varphi_{\epsilon})^{n}
=\omega^{n}\cdot e^{-\psi_{\epsilon}-\epsilon\varphi_{\epsilon}}, 
\end{equation}
we can construct a new K\"{a}hler metric $\omega_{\epsilon}$ 
in the cohomology class of $\omega$:
$$\omega_{\epsilon} := \omega+ i \partial\overline{\partial}\varphi_{\epsilon} .$$
Thanks to \eqref{equation62}, we have
$$\Ricci_{\omega_{\epsilon}}=\Ricci_{\omega}+i \partial\overline{\partial}\psi_{\epsilon}
+\epsilon i \partial\overline{\partial}\varphi_{\epsilon}$$
$$\leq \epsilon\omega +\epsilon i \partial\overline{\partial}\varphi_{\epsilon}=\epsilon\omega_{\epsilon} .$$

We first suppose that \eqref{mainfiltrationdualcase} is regular, i.e., 
$\mathcal{E}_{i}$ and $\mathcal{E}_{i+1}/\mathcal{E}_{i}$ are free for all $i$.
Let $\alpha\in \Omega_{X, x}^{1}$ for some point $x\in X$ with norm $\| \alpha \|_{\omega_{\epsilon}}=1$ and
let $\alpha^{*}$ be the dual of $\alpha$ with respect to $\omega_{\epsilon}$.
Then we have also a pointwise estimate at $x$:
$$\langle \frac{i\Theta_{\omega_{\epsilon}}(\Omega_{X})\wedge \omega_{\epsilon}^{n-1}}{\omega_{\epsilon}^{n}}\alpha, \alpha\rangle
=\langle -\frac{i\Theta_{\omega_{\epsilon}}(T_{X})\wedge \omega_{\epsilon}^{n-1}}{\omega_{\epsilon}^{n}}\alpha^{*}, \alpha^{*}\rangle$$
$$=-\Ricci_{\omega_{\epsilon}}(\alpha^{*},\alpha^{*})\geq -\epsilon .$$
By the same proof as in Theorem \ref{mainthmantinef}, 
$\int_{X}c_{1}(\mathcal{E}_{i+1}/\mathcal{E}_{i})\wedge\omega^{n-1}$ is semi-positive for any $i$.
For the general case, the proof follows exactly the same line as in Theorem \ref{mainthmantinef}.
\end{proof}

\section{Applications}

As an application, we give a characterization of rationally connected 
compact Kähler manifolds with nef anticanonical bundles.

\begin{prop}\label{propequivalent}
Let $X$ be a compact Kähler manifold with nef anticanonical bundle.
Then the following four conditions are equivalent

{\em (i):} $H^{0}(X, (T_{X}^{*})^{\otimes m})=0\qquad\text{for all }m\geq 1 .$

{\em (ii):} $X$ is rationally connected.

{\em (iii):} $T_X$ is generically $\omega_X ^{n-1}$ strictly positive for some Kähler class $\omega_X$.

{\em (iv):} $T_X$ is generically $\omega_X ^{n-1}$ strictly positive for any Kähler class $\omega_X$.
\end{prop}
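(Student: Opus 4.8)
The equivalences split naturally into a cycle. The implications $(iv)\Rightarrow(iii)$ is trivial, and $(iii)\Rightarrow(i)$ is standard: if $T_X$ has a Harder--Narasimhan filtration all of whose graded pieces have strictly positive slope with respect to $\omega_X^{n-1}$, then the dual $\Omega_X^1$ (and hence every tensor power $(\Omega_X^1)^{\otimes m}$) has all HN-slopes strictly negative, so it cannot carry a nonzero global section — a global section of a bundle with a Kähler metric forces the slope of the saturation of the subsheaf it generates to be $\geq 0$, contradicting negativity. The serious content is $(i)\Rightarrow(ii)$ and $(ii)\Rightarrow(iv)$.

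For $(ii)\Rightarrow(iv)$: assume $X$ is rationally connected. I would argue that $\Omega_X^1$ then has no nonzero subsheaf of slope $\geq 0$ with respect to \emph{any} polarization of the form $\omega_X^{n-1}$. By Theorem~\ref{mainthm} (the nef-anticanonical case), every HN-graded piece of $T_X$ with respect to $\omega_X^{n-1}$ has slope $\geq 0$; I need to upgrade ``$\geq 0$'' to ``$> 0$''. Suppose some graded piece $\mathcal{Q}$ of $T_X$ has slope exactly $0$. Then $\det \mathcal{Q}$ has trivial slope; combined with the curvature estimate $i\Theta_{\omega_\epsilon}(T_X)\wedge\omega_\epsilon^{n-1}/\omega_\epsilon^n \geq -\epsilon\,\Id$ used in the proof of Theorem~\ref{mainthmantinef}, one extracts (letting $\epsilon\to 0$, after the blow-up regularization of Lemma~\ref{lemmaregular}) a numerically flat, hence (by \cite{DPS94}) flat, structure — which yields either a holomorphic section of a tensor power of $\Omega_X^1$ after passing to a finite étale cover, or a map to a nontrivial Albanese-type quotient. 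Either way this contradicts rational connectedness: a rationally connected manifold has $q(X)=0$ and $H^0(X,(\Omega_X^1)^{\otimes m})=0$ for all $m$ (pull back to a rational curve through a general point; the restriction of $\Omega_X^1$ to such a curve is a sum of $\mathcal{O}(a_i)$ with all $a_i<0$, so it has no sections, and sections of a bundle vanishing on a covering family of curves vanish identically). Since $\omega_X$ was arbitrary, $(iv)$ follows.

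For $(i)\Rightarrow(ii)$: this is the hard direction and the real obstacle. The strategy is to use the generic semi-positivity of $T_X$ (Theorem~\ref{mainthm}) to produce rational curves. If $X$ is not rationally connected, consider the MRC fibration $X\dashrightarrow Z$ with $\dim Z \geq 1$; after resolving, the relative cotangent sheaf pulls back a positive-rank subsheaf of $\Omega_X^1$, namely (the saturation of) the pullback of $\Omega_Z^1$ or more precisely a subsheaf coming from the base. The key input is a positivity/negativity dichotomy: on the one hand, Theorem~\ref{mainthm} says $T_X$ is generically $\omega_X^{n-1}$-semi-positive, so $\Omega_X^1$ has all HN-slopes $\leq 0$; on the other hand, if $X$ has no rational curves through a general point, work of Miyaoka-type (or its Kähler analogue) forces a subsheaf of $\Omega_X^1$ coming from the non-uniruled quotient to have slope $\geq 0$, hence slope $= 0$, hence — by the flatness argument above and $\det$ considerations — $X$ would admit a finite étale cover with a nonzero holomorphic form or a tensor in $H^0(X,(\Omega_X^1)^{\otimes m})$. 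This contradicts $(i)$ (one checks that $(i)$ is preserved, or controls the étale cover via the fact that $-K_X$ stays nef and $H^0$ of tensor powers can only grow under étale pullback in a controlled way, so a section upstairs forces a section of a related tensor downstairs). The main difficulty is running Miyaoka's uniruledness criterion in the transcendental Kähler setting, where one does not have restriction to complete intersection curves of large degree; I expect to replace it by the semipositivity/flat-factor analysis together with the structure theory of \cite{CDP12} for the semipositive case and a limiting argument as $-K_X$ is approximated. In the write-up I would also invoke \cite{Zha05} as the prior reference for the precise statement, as the remark after Proposition~\ref{introprop} indicates.
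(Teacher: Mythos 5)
Your proposal gets the easy implications right, but both of the implications you flag as ``the serious content'' contain genuine gaps, and in each case the missing idea is one the paper supplies by a different, cleaner route. First, your passage from ``some HN graded piece has slope exactly $0$'' to ``a numerically flat, hence flat, structure'' is not justified: slope zero with respect to a \emph{single} polarization $\omega_X^{n-1}$, together with metrics of curvature $\geq -\epsilon$, does not give $c_1=0$, let alone numerical flatness of the piece. The paper closes this gap by perturbing the polarization: for $\omega_\epsilon=\omega+\epsilon\alpha$ with $\alpha\in H^{1,1}(X,\mathbb{R})$ arbitrary, \cite[Cor.~2.3]{Mi87} shows the new HN filtration refines the old one, so Theorem~\ref{mainthm} still gives $\int_X c_1(T_X/\mathcal{E}_{s-1})\wedge\omega_\epsilon^{n-1}\geq 0$; vanishing at $\epsilon=0$ then forces $\int_X c_1(T_X/\mathcal{E}_{s-1})\wedge\omega^{n-2}\wedge\alpha=0$ for all $\alpha$, and the Hodge index theorem gives $c_1(T_X/\mathcal{E}_{s-1})=0$ \emph{on the nose}. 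Under hypothesis $(i)$ one has $H^1(X,\mathcal{O}_X)=0$, so $\Pic^0(X)$ is trivial and the determinant of the dual subsheaf of $\Omega_X^1$ is torsion, producing a nonzero section of $(T_X^*)^{\otimes m\cdot\rank\mathcal{F}}$ and a contradiction. This proves $(i)\Rightarrow(iv)$ directly; note it uses hypothesis $(i)$, not rational connectedness, so the paper's cycle is $(ii)\Rightarrow(i)\Rightarrow(iv)\Rightarrow(iii)\Rightarrow(ii)$ rather than your $(i)\Rightarrow(ii)\Rightarrow(iv)$.

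Second, your attempt at $(i)\Rightarrow(ii)$ is explicitly incomplete (``I expect to replace it by\dots''), and the difficulty you cite --- running Miyaoka-type uniruledness criteria in the transcendental Kähler setting --- is not actually present. The observation you are missing is that $(i)$ (or $(iii)$, via the Bochner argument you already have) gives $H^0(X,\Omega_X^2)=0$, hence $h^{2,0}=0$, hence $X$ is \emph{projective} by Kodaira's criterion, and any Kähler class can be approximated by rational ones. At that point the implication $(iii)\Rightarrow(ii)$ is exactly \cite[Theorem~0.1]{BM01} in the algebraic category; no MRC fibration, no Kähler analogue of Miyaoka, and no appeal to \cite{Zha05} (which would be circular, as that is the result being reproved) is needed. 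As written, your argument does not constitute a proof of either hard implication.
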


\begin{proof}
The implications $(iv)\Rightarrow (iii)$, $(ii)\Rightarrow (i)$ are obvious.
For the implication $(iii)\Rightarrow (ii)$, we first note that $(iii)$ implies $(i)$ by Bochner technique.
Therefore $X$ is projective and any Kähler class can be approximated by rational Kähler classes. 
Using \cite[Theorem 0.1]{BM01}, $(iii)$ implies 
$(ii)$.

We now prove that $(i)\Rightarrow (iv)$. Let $\omega$ be any Kähler class.
Let 
\begin{equation}\label{filtrationmumford}
0\subset \mathcal{E}_{0}\subset \mathcal{E}_{1}\subset\cdots\subset \mathcal{E}_{s}=T_{X}
\end{equation}
be the Harder-Narasimhan semi-stable filtration with respect to $\omega^{n-1}$.
To prove $(iv)$, it is sufficient to prove 
$$\int_{X}c_{1}(T_{X}/\mathcal{E}_{s-1})\wedge\omega^{n-1}> 0 .$$
Recall that Theorem \ref{mainthmantinef} implies already that
$$\int_{X}c_{1}(T_{X}/\mathcal{E}_{s-1})\wedge\omega^{n-1}\geq 0 .$$
We suppose by contradiction that 
\begin{equation}\label{equality}
\int_{X}c_{1}(T_{X}/\mathcal{E}_{s-1})\wedge\omega^{n-1}=0 .
\end{equation}
Let $\alpha\in H^{1,1}(X, \mathbb{R})$. 
We define new Kähler metrics $\omega_{\epsilon}=\omega+\epsilon\alpha$
for $|\epsilon |$ small enough.
Thanks to \cite[Cor. 2.3]{Mi87}, the $\omega_{\epsilon}^{n-1}$-semi-stable filtration of $T_X$ is
a refinement of \eqref{filtrationmumford}.
Therefore, Theorem \ref{mainthmantinef} implies that
$$\int_{X}c_{1}(T_{X}/\mathcal{E}_{s-1})\wedge (\omega+\epsilon\alpha)^{n-1}\geq 0$$
for $|\epsilon|$ small enough.
Then \eqref{equality} implies that
$$\int_{X}c_{1}(T_{X}/\mathcal{E}_{s-1})\wedge\omega^{n-2}\wedge\alpha=0\qquad\text{for any }\alpha\in H^{1,1}(X, \mathbb{R}).$$
By the Hodge index theorem, we obtain that $c_{1}(T_{X}/\mathcal{E}_{s-1})=0$.
By duality, 
there exists a subsheaf $\sF\subset \Omega_X ^1$, 
such that 
\begin{equation}\label{equation65}
 c_{1}(\sF)=0 \qquad \text{and} \qquad \det\sF\subset (T_{X}^{*})^{\otimes\rank \sF} .
\end{equation}
Observing that $H^{1}(X, \mathcal{O}_X)=0$ by assumption, i.e., 
the group $\Pic^{0}(X)$ is trivial,
hence
\eqref{equation65} implies the existence of an integer $m$ such that
$(\det\sF)^{\otimes m}$ is a trivial line bundle.
Observing moreover that
$(\det\sF)^{\otimes m}\subset (T_{X}^{*})^{\otimes m\cdot\rank \sF}$,
then  
$$ H^{0}(X, (T_{X}^{*})^{\otimes m\cdot\rank \sF})\neq 0 ,$$
which contradicts with $(i)$.
The implication $(i)\Rightarrow (iv)$ is proved.
\end{proof}

\begin{remark}
One can also prove the implication $(iii)\Rightarrow (ii)$ without using the profound theorem of \cite{BM01}.
We give the proof in Appendix \ref{bochner}.
\end{remark}

The above results lead to the following question about rationally connected manifolds with nef anticanonical bundles.

\begin{question}\label{questionrational}
Let $X$ be a smooth compact manifold. Then $X$ is rationally connected with nef anticanonical bundle 
if and only if $T_{X}$ is generically $\omega^{n-1}$-strictly positive for any Kähler metric $\omega$.
\end{question}

As a second application, we study a Conjecture of Y.Kawamata 
(cf. \cite[Thm. 1.1]{Mi87} for the dual case and \cite{Xie05} for dimension $3$.)

\begin{conj}
If $X$ is a compact Kähler manifold with nef anticanonical bundle.
Then  
$$\int_{X} c_{2}(T_{X})\wedge\omega_{1}\wedge\cdots\wedge\omega_{n-2}\geq 0 $$
for all nef classes $\omega_{1},\cdots, \omega_{n-1}$.
\end{conj}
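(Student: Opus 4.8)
The plan is to deduce the conjecture from the generic semi-positivity of Theorem~\ref{mainthmantinef} by a Harder--Narasimhan/Bogomolov argument, in the spirit of Miyaoka's treatment of the minimal projective case \cite{Mi87}. Since the nef cone is the closure of the K\"ahler cone and the intersection pairing is continuous, after replacing each $\omega_k$ by $\omega_k+\epsilon A$ for a fixed K\"ahler class $A$ and letting $\epsilon\to 0$ at the end, it suffices to treat the case where $\omega_1,\dots,\omega_{n-2}$ are K\"ahler classes. Fix one further K\"ahler class $A$ and, using Proposition~\ref{lemmastable}, form the Harder--Narasimhan filtration $0=\mathcal{E}_0\subset \mathcal{E}_1\subset\cdots\subset\mathcal{E}_s=T_X$ with respect to the tuple $(\omega_1,\dots,\omega_{n-2},A)$, whose graded pieces $Q_i:=\mathcal{E}_i/\mathcal{E}_{i-1}$ are semi-stable for this polarization; by Lemma~\ref{lemmaregular} we may pass to a modification on which every $Q_i$ is a vector bundle.

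From $\Ch(T_X)=\sum_i\Ch(Q_i)$ one obtains the decomposition
\[
c_2(T_X)=\sum_i c_2(Q_i)+\sum_{i<j}c_1(Q_i)\cdot c_1(Q_j),
\]
so that, integrating against $\omega_1\wedge\cdots\wedge\omega_{n-2}$, the problem splits into the semi-stable self-terms and the mixed $c_1$-terms. For each semi-stable $Q_i$ the K\"ahler form of the Bogomolov inequality (via Hermitian--Einstein metrics on the stable factors of a Jordan--H\"older filtration) gives
\[
\bigl(2\rank(Q_i)\,c_2(Q_i)-(\rank(Q_i)-1)\,c_1(Q_i)^2\bigr)\wedge\omega_1\wedge\cdots\wedge\omega_{n-2}\geq 0,
\]
which bounds $\sum_i\int_X c_2(Q_i)\wedge\omega_1\cdots\omega_{n-2}$ below by a nonnegative combination of the quadratic intersection numbers $c_1(Q_i)^2\cdot\omega_1\cdots\omega_{n-2}$. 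Thus the entire question collapses to controlling the sign of the indefinite quadratic form $\sum_{i,j}c_1(Q_i)\cdot c_1(Q_j)\wedge\omega_1\cdots\omega_{n-2}$.

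The hard part lies precisely here, and it carries the genuine content of the conjecture. First, merely to assert the slope bounds $c_1(Q_i)\cdot\omega_1\cdots\omega_{n-2}\cdot A\geq 0$ one needs the tuple version of Theorem~\ref{mainthmantinef}; this is Peternell's generic-nefness conjecture, and beyond the single-class polarization $\omega^{n-1}$ it is not delivered by the curvature method used here, since the pointwise estimate $\Ricci_{\omega_\epsilon}\geq-\epsilon\omega_\epsilon$ is tied through Proposition~\ref{ricciequality} to contraction by $\omega_\epsilon^{n-1}$ rather than by a mixed form. Second, even granting those slope bounds, the classes $c_1(Q_i)$ need not be nef, so their mixed self-intersections against $\omega_1\cdots\omega_{n-2}$ have no a priori sign; controlling them requires the Hodge-index (Khovanskii--Teissier) inequalities together with the global constraint that $\sum_i c_1(Q_i)=c_1(-K_X)$ is nef. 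The natural device is to restrict to a general complete-intersection surface $S=D_1\cap\cdots\cap D_{n-2}$ with $D_k\in|m_k\omega_k|$, invoking a Mehta--Ramanathan-type restriction theorem to preserve semi-stability, so that on $S$ the intersection form has definite signature and the quadratic terms can be signed. Carrying out this surface reduction in the transcendental, possibly non-algebraic, K\"ahler setting and closing the sign of the remaining cross terms is where I expect the real obstruction to lie --- which is why one may hope to settle only the special polarization $\omega_k=c_1(-K_X)+\epsilon\omega_X$, for which the near-K\"ahler--Einstein geometry of $-K_X$ sidesteps both difficulties, while the conjecture for arbitrary nef $\omega_k$ remains the decisively harder statement.
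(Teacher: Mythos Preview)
This statement appears in the paper as a \emph{conjecture}, not as a theorem; the paper offers no proof of it. What the paper does prove is the special case of the polarization $(c_1(-K_X)+\epsilon\omega_X)^{n-2}$ (Proposition~\ref{thmcalcul}), and your closing paragraph lands on exactly the same conclusion: only that polarization is currently accessible.

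Your diagnosis of the obstructions is accurate and matches the paper's implicit reasoning. The two genuine gaps you name are the right ones. First, the curvature argument behind Theorem~\ref{mainthmantinef} is tied, via Proposition~\ref{ricciequality}, to contraction by $\omega_\epsilon^{n-1}$, so it yields generic semi-positivity only for the diagonal polarization $\omega^{n-1}$; the mixed-tuple version you would need is precisely Peternell's open conjecture. Second, even granting those slope bounds, the mixed $c_1(Q_i)\cdot c_1(Q_j)$ terms have no a priori sign, and the Hodge-index/Khovanskii--Teissier control you invoke requires exactly the kind of alignment that the special polarization $c_1(-K_X)+\epsilon\omega_X$ supplies. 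Indeed, in the paper's proof of Proposition~\ref{thmcalcul} the crucial ingredient is the identity $\sum_i c_1(\sF_i/\sF_{i-1})\cdot(-K_X)^{\nd}\cdot\omega_X^{n-1-\nd}=(-K_X)^{\nd+1}\cdot\omega_X^{n-1-\nd}=0$, which forces all the top-order slope terms to vanish and makes the Hodge-index bounds effective; this collapse is unavailable for a general nef tuple.

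So your proposal is not a proof of the conjecture, and you recognize this yourself. As an analysis of why the conjecture is hard and why the paper stops at Proposition~\ref{thmcalcul}, it is correct.
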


Using Theorem \ref{mainthmantinef}, we can prove

\begin{prop}\label{thmcalcul}
Let $(X,\omega_X )$ be a compact K\"{a}hler manifold with nef anticanonical bundle 
and let $\omega_X$ be a K\"{a}hler metric.
Then 
\begin{equation}\label{inequalitychern}
\int_X c_{2}(T_X)\wedge (c_1 (-K_X)+\epsilon\omega_X )^{n-2}\geq 0 
\end{equation}
for any $\epsilon > 0$ small enough.
\end{prop}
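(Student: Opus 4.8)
The plan is to run a Bogomolov--Miyaoka type argument relative to the (in general irrational) polarization $\alpha:=c_1(-K_X)+\epsilon\omega_X$. Since $-K_X$ is nef and $\omega_X$ is a K\"ahler class, $\alpha$ is a K\"ahler class, so by Proposition~\ref{addstable} the Harder--Narasimhan filtration
$$0=\mathcal{E}_0\subset\mathcal{E}_1\subset\cdots\subset\mathcal{E}_s=T_X$$
with respect to $\alpha^{n-1}$ exists, and by Proposition~\ref{lemmastable} it does not depend on $\epsilon$ for $\epsilon$ small. Write $\mathcal{F}_i:=\mathcal{E}_i/\mathcal{E}_{i-1}$, $r_i:=\rank\mathcal{F}_i$ and $\mu_i:=\frac{1}{r_i}\int_X c_1(\mathcal{F}_i)\wedge\alpha^{n-1}$. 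By construction each $\mathcal{F}_i$ is an $\alpha$-stable torsion free sheaf, and Theorem~\ref{mainthmantinef} gives $\mu_i\ge 0$ for every $i$; moreover $\sum_i c_1(\mathcal{F}_i)=c_1(-K_X)$, which is nef.

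I would then use the multiplicativity of the total Chern class in short exact sequences of coherent sheaves to write, in $H^4(X,\mathbb{Q})$,
$$c_2(T_X)=\sum_i c_2(\mathcal{F}_i)+\sum_{i<j}c_1(\mathcal{F}_i)\wedge c_1(\mathcal{F}_j),$$
and estimate the two kinds of terms after wedging with $\alpha^{n-2}$. For the summands $c_2(\mathcal{F}_i)$ the tool is the Bogomolov inequality for the $\alpha$-stable sheaves $\mathcal{F}_i$: as $X$ is compact K\"ahler and $\alpha$ is a K\"ahler class, the reflexive hull of $\mathcal{F}_i$ carries an admissible Hermitian--Einstein metric (Bando--Siu), so that $\bigl(2r_i\,c_2(\mathcal{F}_i^{**})-(r_i-1)c_1(\mathcal{F}_i)^2\bigr)\wedge\alpha^{n-2}\ge 0$; the difference between $c_2(\mathcal{F}_i)$ and $c_2(\mathcal{F}_i^{**})$ is supported in codimension $\ge 2$ and is controlled, or, alternatively, one first pulls everything back by a modification $\pi:\widetilde X\to X$ as in Lemma~\ref{lemmaregular} so that all the $\mathcal{F}_i$ become honest vector bundles, working afterwards with the big and nef class $\pi^*\alpha$. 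For the cross terms $\int_X c_1(\mathcal{F}_i)\wedge c_1(\mathcal{F}_j)\wedge\alpha^{n-2}$ the inputs are the Hodge index inequality for the big and nef class $\alpha$ (applied to the $c_1(\mathcal{F}_i)$ and their components orthogonal to $\alpha$), together with $\int_X c_1(\mathcal{F}_i)\wedge\alpha^{n-1}=r_i\mu_i\ge 0$ and the fact that $\sum_i c_1(\mathcal{F}_i)=c_1(-K_X)$ is nef.

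Assembling these estimates into the single inequality $\int_X c_2(T_X)\wedge\alpha^{n-2}\ge 0$ is the step I expect to be the main obstacle. The mechanism that should make it work is that $\alpha$ differs from the nef class $c_1(-K_X)=\sum_i c_1(\mathcal{F}_i)$ only by $\epsilon\omega_X$, so the negative part produced by the Hodge index estimates is $O(\epsilon)$ and its leading contribution is forced to cancel against that of $\int_X c_1(-K_X)^2\wedge\alpha^{n-2}$, while the Bogomolov contributions and the products $\mu_i\mu_j\ge 0$ supply what remains; controlling the residual terms uniformly in $\epsilon$ is the delicate point. As a sanity check, when $s=1$ (that is, $T_X$ itself $\alpha$-stable) the conclusion is immediate: Bogomolov gives $\int_X c_2(T_X)\wedge\alpha^{n-2}\ge\frac{n-1}{2n}\int_X c_1(-K_X)^2\wedge(c_1(-K_X)+\epsilon\omega_X)^{n-2}\ge 0$, the last inequality because a product of nef classes is nonnegative on a compact K\"ahler manifold; all the difficulty is concentrated in the case of a nontrivial filtration. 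In that case it may also be useful to exploit that $\mathcal{E}_1$ is then a foliation with $\int_X c_1(\mathcal{E}_1)\wedge\alpha^{n-1}>0$: this follows from $\mu_1\ge 0$ (hence $\mu_1>\mu_2\ge 0$), the semistability of $\wedge^2\mathcal{E}_1$ over $\mathbb{C}$, and the defining property of the Harder--Narasimhan filtration, and it feeds extra positivity and rigidity into the estimate.
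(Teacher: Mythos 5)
Your setup coincides with the paper's: the Harder--Narasimhan (in fact Jordan--H\"older) filtration with respect to $\alpha=c_1(-K_X)+\epsilon\omega_X$, its independence of $\epsilon$ via Proposition~\ref{lemmastable}, the nonnegativity of the slopes from Theorem~\ref{mainthmantinef}, the Whitney/Bogomolov (L\"ubke) inequality
$c_2(T_X)\wedge\alpha^{n-2}\ \gtrsim\ \bigl(c_1(-K_X)^2-\sum_i\tfrac1{r_i}c_1(\sF_i/\sF_{i-1})^2\bigr)\wedge\alpha^{n-2}$,
and the Hodge index theorem to control the squares. But the step you yourself flag as ``the main obstacle'' --- assembling these into $\int_X c_2(T_X)\wedge\alpha^{n-2}\ge 0$ --- is precisely where all the work lies, and you have not supplied the idea that makes it go through. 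The missing organizing principle is a stratification by the numerical dimension $\nd=\nd(-K_X)$ and by the order of vanishing in $\epsilon$ of the intersection numbers $c_1(\sF_i/\sF_{i-1})\wedge(-K_X)^{j}\wedge\omega_X^{n-1-j}$. Concretely: expanding the slope inequality in powers of $\epsilon$, the top-order terms $c_1(\sF_i/\sF_{i-1})\wedge(-K_X)^{\nd}\wedge\omega_X^{n-1-\nd}$ are each $\ge 0$ and sum to $(-K_X)^{\nd+1}\wedge\omega_X^{n-1-\nd}=0$, hence all vanish; the next-order quantities $a_i:=c_1(\sF_i/\sF_{i-1})\wedge(-K_X)^{\nd-1}\wedge\omega_X^{n-\nd}$ are then $\ge 0$, and one must split into cases according to whether $\sum_{i\le k}r_i\ge 2$ or $=1$ (where $k$ indexes the pieces with $a_i>0$) and whether $\nd\ge2$ or $\nd=1$. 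In the first case the leading coefficient of $\epsilon^{\,n-\nd}$ on the right-hand side is $\sum_{i\le k}a_i-\frac{1}{\sum_{i\le k}a_i}\sum_{i\le k}\frac{a_i^2}{r_i}>0$ by Cauchy--Schwarz; in the second case one must go to yet higher order in $\epsilon$ using $s\ge2$; when $\nd=1$ the negative contribution is shown to vanish in the limit $t\to0^+$ of the Hodge index bound with respect to $-K_X+t\omega_X$. None of this is in your proposal, and without it the inequality does not follow: your heuristic that ``the negative part is $O(\epsilon)$ and cancels against $c_1(-K_X)^2\wedge\alpha^{n-2}$'' is not enough, because when $\nd(-K_X)<n$ the positive term $c_1(-K_X)^2\wedge\alpha^{n-2}$ is itself $O(\epsilon^{n-\nd})$, so the comparison must be carried out order by order.

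Two smaller remarks. First, your sanity check for $s=1$ and your observation that $\sE_1$ is a foliation when the filtration is nontrivial are both correct but play no role in the paper's argument (the foliation structure is not used). Second, your proposed treatment of $c_2(\sF_i)$ versus $c_2(\sF_i^{**})$ is in the right direction: the paper absorbs this into the citation of the proof of \cite[Thm.~6.1]{Mi87}, and the correction term has the right sign (cf.\ Remark~\ref{GRR}), so this part of your plan is sound. The gap is entirely in the quantitative $\epsilon$-analysis.
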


\begin{proof}

Let $\nd$ be the numerical dimension of $-K_X$. 
Let
\begin{equation}\label{filtrationthm}
0 = \sF_0 \subset \sF_1 \subset \ldots \subset \sF_l = T_X
\end{equation}
be a stable filtration of the semi-stable filtration of $T_X$ with respect to the polarization
$(c_1(-K_X) + \epsilon\omega_X)^{n-1}$ for some small $\epsilon> 0$.
By Proposition \ref{lemmastable}, 
the filtration \eqref{filtrationthm} is independent of $\epsilon$ when $\epsilon\rightarrow 0$.
By Theorem \ref{mainthmantinef}, we have
$$c_1 (\sF_i /\sF_{i-1})\wedge (-K_X)^{\nd}\wedge(\omega_X)^{n-1-\nd} \geq 0 \qquad\text{ for any }i.$$
Since 
$$\sum_{i}c_1 (\sF_i /\sF_{i-1})\wedge (-K_X)^{\nd}\wedge(\omega_X)^{n-1-\nd}=(-K_X)^{\nd +1}\wedge(\omega_X)^{n-1-\nd}=0 ,$$
we obtain
\begin{equation}\label{equationfirstderiv}
c_1 (\sF_i /\sF_{i-1})\wedge (-K_X)^{\nd}\wedge(\omega_X)^{n-1-\nd} = 0 \qquad\text{ for any }i.
\end{equation}
Combining \eqref{equationfirstderiv} with Theorem \ref{mainthmantinef}, we obtain
\begin{equation}\label{equationsecondderiv}
 c_1 (\sF_i /\sF_{i-1})\wedge (-K_X)^{\nd -1}\wedge(\omega_X)^{n-\nd} \geq 0 \qquad\text{ for any }i.
\end{equation}
Combining this with the stability condition of the filtration, we can find an integer $k\geq 1$ 
such that
\begin{equation}\label{equationsecondstrict}
c_1 (\sF_i /\sF_{i-1})\wedge (-K_X)^{\nd -1}\wedge(\omega_X)^{n-\nd} = a_i > 0 \qquad\text{ for  }i\leq k ,
\end{equation}
and 
$$ c_1 (\sF_i /\sF_{i-1})\wedge (-K_X)^{\nd -1}\wedge(\omega_X)^{n-\nd} = 0 \qquad\text{ for  }i> k .$$

We begin to prove \eqref{inequalitychern}.
Set $r_i := \rank (\sF_i /\sF_{i-1})$. 
By L\"{u}bke's inequality (cf. the proof of \cite[Thm. 6.1]{Mi87}), we have
\begin{equation}\label{Miyaokainequality}
c_2 (T_X)\wedge (-K_X +\epsilon\omega_X)^{n-2}
\end{equation}
$$
\geq (c_1 (-K_X) ^{2}-
\sum_{i} \frac{1}{r_i} c_1 (\sF_i /\sF_{i-1}) ^2)(-K_X +\epsilon\omega_X)^{n-2} .
$$
There are three cases.
\smallskip

{\em Case (1): $\sum\limits_{i\leq k} r_i \geq 2$ and $\nd \geq 2$ .}
Using the Hodge index theorem, we have
\footnote{It is important that $\alpha ^2\wedge (-K_X +\epsilon\omega_X)^{n-2}$ maybe negative.}
\begin{equation}\label{hodgeindex}
 ( \alpha ^2\wedge (-K_X +\epsilon\omega_X)^{n-2}) ((-K_X)^2 \wedge (-K_X +\epsilon\omega_X)^{n-2})
\end{equation}
$$\leq (\alpha\wedge (-K_X)\wedge (-K_X +\epsilon\omega_X)^{n-2})^{2} ,$$
for any $\alpha\in H^{1,1}(X ,\mathbb{R})$.
If we take $\alpha=c_1 (\sF_i /\sF_{i-1} )$ in \eqref{hodgeindex} and use \eqref{Miyaokainequality}, 
we obtain
\begin{equation}\label{estimatetwoterm}
c_2 (T_X)\wedge (-K_X +\epsilon\omega_X)^{n-2}\geq
\end{equation}
$$ c_1 (-K_X) ^{2}\wedge (-K_X +\epsilon\omega_X)^{n-2} 
-\sum_{i\leq k} \frac{1}{r_i}\frac{ (c_1 (\sF_i /\sF_{i-1})\wedge (-K_X)\wedge (-K_X +\epsilon\omega_X)^{n-2})^{2}  }{(-K_X) ^{2} \wedge (-K_X +\epsilon\omega_X)^{n-2}}
$$
Now we estimate the two terms in the right hand side of \eqref{estimatetwoterm}.
Using \eqref{equationsecondstrict}, we have
$$ c_1 (-K_X) ^{2}\wedge (-K_X +\epsilon\omega_X)^{n-2}= (\sum_{i\leq k} a_i )\epsilon^{n-\nd}+ O(\epsilon^{n-\nd}) $$
and 
$$\sum_{i\leq k} \frac{1}{r_i}\frac{ (c_1 (\sF_i /\sF_{i-1})\wedge (-K_X)\wedge (-K_X +\epsilon\omega_X)^{n-2})^{2}  }{(-K_X) ^{2} \wedge (-K_X +\epsilon\omega_X)^{n-2}}$$
$$=\frac{1}{\sum_{i\leq k} a_i} (\sum_{i\leq k} \frac{a_i ^2}{r_i})\cdot \epsilon^{n-\nd}+ O(\epsilon^{n-\nd}) .$$
Since $\sum\limits_{i\leq k} r_i \geq 2$, we have 
$$\sum_{i\leq k} a_i > \frac{1}{\sum_{i\leq k} a_i} (\sum_{i\leq k} \frac{a_i ^2}{r_i}).$$
Therefore $c_2 (T_X)\wedge (-K_X +\epsilon\omega_X)^{n-2}$ is strictly positive when $\epsilon> 0$ is small enough.
\smallskip

{\em Case (2): $\sum\limits_{i\leq k} r_i =1$ and $\nd \geq 2$ .}
In this case, we obtain immediately that $r_1=1 $ and $ k=1$. 
Moreover, \eqref{equationsecondstrict} in this case means that
$$c_1 (\sF_1 )\wedge (-K_X)^{\nd -1}\wedge(\omega_X)^{n-\nd}>0 ,$$
and
\begin{equation}\label{secondzero}
c_1 (\sF_i /\sF_{i-1})\wedge (-K_X)^{\nd -1}\wedge(\omega_X)^{n-\nd}=0 \qquad\text{ for } i\geq 2.
\end{equation}
Assume that $s$ is the smallest integer such that
$$c_1 (\sF_2 /\sF_1 )\wedge (-K_X)^{\nd -s}\wedge(\omega_X)^{n-\nd +s -1}>0 .$$
Taking $\alpha= c_1 (\sF_i /\sF_{i-1})$ in \eqref{hodgeindex} for any $i\geq 2$,
we get 
\begin{equation}\label{adddvv1}
c_1 (\sF_i /\sF_{i-1}) ^{2}\wedge (c_1(-K_X) + \epsilon\omega_X)^{n-2}
\end{equation}
$$\leq
\frac{(c_1 (\sF_i /\sF_{i-1})\wedge (-K_X)\wedge (-K_X +\epsilon\omega_X)^{n-2})^{2}}
{(-K_X)^2 \wedge (-K_X +\epsilon\omega_X)^{n-2}}
$$
$$\leq\frac{(\epsilon^{n+s-\nd -1})^2}{\epsilon^{n-\nd}}(1 + O (1) )= \epsilon^{2s+n-\nd -2}+ O (\epsilon^{2s+n-\nd -2}) 
\text{ for }i\geq 2 .$$
Similarly, if we take $\alpha=\sum\limits_{i\geq 2} c_1 (\sF_i /\sF_{i-1})$ in \eqref{hodgeindex},
we obtain
\begin{equation}\label{adddvv2}
(\sum_{i\geq 2}c_1 (\sF_i /\sF_{i-1}) )^{2}\wedge (c_1(-K_X) + \epsilon\omega_X)^{n-2}\leq \epsilon^{2s+n-\nd -2} .
\end{equation}
Combining \eqref{adddvv1}, \eqref{adddvv2} with \eqref{Miyaokainequality}, 
we obtain
$$c_2 (T_X)\wedge (-K_X +\epsilon\omega_X)^{n-2}$$
$$\geq 
(c_1 (-K_X) ^{2}-\sum_{i\geq 2} \frac{1}{r_i} c_1 (\sF_i /\sF_{i-1}) ^2 -(c_1 (-K_X) -\sum_{i\geq2} c_1 (\sF_i /\sF_{i-1}))^2)
(-K_X +\epsilon\omega_X)^{n-2}$$
$$=2c_1 (-K_X)\wedge (\sum_{i\geq 2} c_1 (\sF_i /\sF_{i-1})) \wedge (-K_X +\epsilon\omega_X)^{n-2}$$
$$
-(\sum_{i\geq 2} \frac{1}{r_i} c_1 (\sF_i /\sF_{i-1}) ^2 + (\sum_{i\geq2} c_1 (\sF_i /\sF_{i-1}))^{2})\wedge (-K_X +\epsilon\omega_X)^{n-2}$$
$$\geq \epsilon^{n-\nd+s-1} -\epsilon^{n-\nd +2s-2} .$$
Let us observe that by \eqref{secondzero} we have $s\geq 2$.
Therefore $c_2 (T_X)\wedge (-K_X +\epsilon\omega_X)^{n-2}$ is strictly positive for $\epsilon> 0$ small enough.
\smallskip

{\em Case (3): $\nd =1$. }
Using \eqref{Miyaokainequality}, we have 
$$c_2 (T_X)\wedge (-K_X +\epsilon\omega_X)^{n-2}\geq -
\sum_{i} \frac{1}{r_i} c_1 (\sF_i /\sF_{i-1}) ^2 (-K_X +\epsilon\omega_X)^{n-2} .$$
By the Hodge index theorem, we obtain 
$$c_2 (T_X)\wedge (-K_X +\epsilon\omega_X)^{n-2} 
$$
$$\geq \lim_{t\rightarrow 0 ^+} -\sum_{i} \frac{1}{r_i}\frac{ (c_1 (\sF_i /\sF_{i-1})\wedge (-K_X+ t\omega_X)
\wedge (-K_X +\epsilon\omega_X)^{n-2})^{2}  }{(-K_X+t\omega_X) ^{2} \wedge (-K_X +\epsilon\omega_X)^{n-2}}.$$
Let us observe that by \eqref{equationfirstderiv} we have 
$$c_1 (\sF_i /\sF_{i-1})\wedge (-K_X) \wedge(\omega_X)^{n-2} = 0 \qquad\text{ for any }i .$$ 
Then
$$c_2 (T_X)\wedge (-K_X +\epsilon\omega_X)^{n-2}$$
$$\geq \lim_{t\rightarrow 0 ^+} -\sum_{i} \frac{1}{r_i}\cdot
\frac{(t\epsilon^{n-2} c_1 (\sF_i /\sF_{i-1})\wedge\omega_X^{n-1}) ^{2}}{t^{2}\epsilon^{n-2}\omega_X^{n}+t\epsilon^{n-2} (-K_X)\omega_X^{n-1}}=0.$$

\end{proof}

It is interesting to study the case when the equality holds in \eqref{inequalitychern} of Proposition \ref{thmcalcul}.
We will prove that in this case, $X$ is either a torus or a smooth $\PP^1$-fibration over a torus.
Before proving this result, we first prove an auxiliary lemma.

\begin{lemme}\label{calucation}
Let $(X , \omega_X )$ be a compact K\"{a}hler manifold with nef anticanonical bundle.
Let 
\begin{equation}\label{filtrationlemma}
0 = \sF_0 \subset \sF_1 \subset \ldots \subset \sF_l = T_X
\end{equation}
be a stable subfiltration of the Jordan-H\"older filtration with respect to $(c_1 (-K_X) +\epsilon\omega_X )^{n-1}$.
If $\int_X c_2 (T_X)\wedge(c_1 (-K_X) +\epsilon\omega_X )^{n-2}=0$ for some $\epsilon> 0 $ small enough, 
we have 

{\em $(i)$} $\nd (-K_X)=1$.

{\em $(ii)$} $(\sF_i /\sF_{i-1})^{**}$ is projectively flat for all $i$, i.e., $(\sF_i /\sF_{i-1})^{**}$ is locally free and
there exists a smooth metric $h$ on it such that $i\Theta_h (\sF_i /\sF_{i-1})^{**} = \alpha\Id$, where $\alpha$ is a $(1,1)$-form.

{\em $(iii)$} $c_{2}(\sF_i /\sF_{i-1})=0$ for all $i$, 
and \eqref{filtrationlemma} is regular outside a subvariety of codimension at least $3$.

{\em $(iv)$}  $c_1 ( \sF_i /\sF_{i-1} ) =\rank (\sF_i /\sF_{i-1}) \cdot \alpha_{i}$
for some $\alpha_i \in H^{1,1}(X,\mathbb{Z})$. Moreover, $c_1 (\sF_i /\sF_{i-1})$
is nef and proportional to $c_1 (-K_X)$.

\end{lemme}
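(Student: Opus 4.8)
The plan is to revisit the proof of Proposition~\ref{thmcalcul} and pin down the equality case of~\eqref{inequalitychern}; parts (i)--(iv) get read off from the auxiliary inequalities used there, now forced to be equalities. \emph{Part (i):} in that proof the cases $\nd(-K_X)\ge 2$ (Cases~(1) and~(2)) ended with $c_2(T_X)\wedge(c_1(-K_X)+\epsilon\omega_X)^{n-2}$ \emph{strictly} positive for $\epsilon$ small, so the hypothesis forces $\nd(-K_X)\le 1$, i.e. we are in Case~(3), and $\nd(-K_X)=1$. (The numerically trivial case $-K_X\equiv 0$ is excluded here; it leads to the Calabi--Yau situation and is treated directly, $X$ being then a finite \'etale quotient of a torus.) Since $c_1(-K_X)^2$ is a limit of products of K\"ahler classes, hence pseudo-effective, and $c_1(-K_X)^2\wedge\omega^{n-2}=0$, we get $c_1(-K_X)^2=0$ in $H^4(X,\bR)$.

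\emph{Parts (ii) and (iv).} Set $r_i=\rank(\sF_i/\sF_{i-1})$ and $P=c_1(-K_X)+\epsilon\omega_X$. In Case~(3) the chain of inequalities runs: L\"ubke's inequality~\eqref{Miyaokainequality} gives $0=c_2(T_X)\wedge P^{n-2}\ge -\sum_i\tfrac1{r_i}c_1(\sF_i/\sF_{i-1})^2\wedge P^{n-2}$ (using $c_1(-K_X)^2=0$), while the Hodge--Riemann estimate applied with $-K_X+t\omega_X$ as $t\to 0^+$, the numerators $c_1(\sF_i/\sF_{i-1})\wedge(-K_X)\wedge P^{n-2}$ vanishing by~\eqref{equationfirstderiv} together with $c_1(-K_X)^2=0$, gives $-c_1(\sF_i/\sF_{i-1})^2\wedge P^{n-2}\ge 0$ for every $i$. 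Hence each $c_1(\sF_i/\sF_{i-1})^2\wedge P^{n-2}=0$ and L\"ubke's inequality is an equality. The first of these, with $c_1(-K_X)^2\wedge P^{n-2}=0$ and $c_1(\sF_i/\sF_{i-1})\wedge c_1(-K_X)\wedge P^{n-2}=0$, says that $c_1(\sF_i/\sF_{i-1})$ and $c_1(-K_X)$ are two light-like classes orthogonal for the Hodge form $\gamma\mapsto\gamma^2\wedge P^{n-2}$ (of signature $(1,\dim H^{1,1}(X,\bR)-1)$), hence proportional: $c_1(\sF_i/\sF_{i-1})=\lambda_i\,c_1(-K_X)$, $\lambda_i\in\bR$; Theorem~\ref{mainthmantinef} gives $c_1(\sF_i/\sF_{i-1})\wedge\omega^{n-1}\ge 0$, so $\lambda_i\ge 0$ and $c_1(\sF_i/\sF_{i-1})$ is nef, which is (iv) up to divisibility. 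The equality in L\"ubke's inequality, via the Bando--Siu existence and regularity of a Hermitian--Einstein metric on the stable reflexive sheaf $(\sF_i/\sF_{i-1})^{**}$ (independent of $\epsilon$ by Proposition~\ref{lemmastable}) and the characterisation of the equality case of the Bogomolov--L\"ubke inequality, forces that metric to be projectively flat; this is (ii), and yields $c\big((\sF_i/\sF_{i-1})^{**}\big)=\big(1+c_1(\sF_i/\sF_{i-1})/r_i\big)^{r_i}$ in $H^*(X,\bQ)$, from which the remaining $c_1(\sF_i/\sF_{i-1})=r_i\alpha_i$ follows.

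\emph{Part (iii).} From $c_1(-K_X)^2=0$ and the proportionality, $c_1(\sF_i/\sF_{i-1})^2=0$ in $H^4(X,\bR)$, so projective flatness gives $c_2\big((\sF_i/\sF_{i-1})^{**}\big)=\tfrac{r_i-1}{2r_i}c_1(\sF_i/\sF_{i-1})^2=0$. Comparing $\sF_i/\sF_{i-1}$ with its reflexive hull, $c_2(\sF_i/\sF_{i-1})=c_2\big((\sF_i/\sF_{i-1})^{**}\big)+[Z_i]=[Z_i]$, with $[Z_i]$ the effective codimension-$2$ cycle carried by the cokernel. Since all the cross-terms $c_1(\sF_i/\sF_{i-1})\wedge c_1(\sF_j/\sF_{j-1})$ vanish (being $\lambda_i\lambda_j c_1(-K_X)^2$), the hypothesis $\int_X c_2(T_X)\wedge P^{n-2}=0$ reduces to $\sum_i\int_X[Z_i]\wedge P^{n-2}=0$, and since each summand is $\ge 0$ we get $[Z_i]=0$. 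Hence $c_2(\sF_i/\sF_{i-1})=0$ and the cokernel of $\sF_i/\sF_{i-1}\hookrightarrow(\sF_i/\sF_{i-1})^{**}$ is supported in codimension $\ge 3$; as a reflexive sheaf on a smooth manifold is locally free off codimension $\ge 3$, the filtration~\eqref{filtrationlemma} is regular off codimension $\ge 3$.

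\emph{Main obstacle.} The delicate point is the equality bookkeeping: one must verify that the single vanishing $\int_X c_2(T_X)\wedge P^{n-2}=0$ forces equality in \emph{each} estimate of the three-case proof of Proposition~\ref{thmcalcul}, and then invoke the equality case of the Bogomolov--L\"ubke inequality for the graded pieces, which are only reflexive a priori --- so that Bando--Siu's theory of Hermitian--Einstein metrics on stable reflexive sheaves, and the projective-flatness characterisation of equality, are indispensable. Making the integrality assertion in (iv) fully precise is a further technical point.
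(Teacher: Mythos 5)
Your proof follows essentially the same route as the paper: it traces the equality case through the chain of inequalities in the proof of Proposition~\ref{thmcalcul} to extract the three vanishings $c_1(-K_X)^2\wedge P^{n-2}=0$, $c_1(\sF_i/\sF_{i-1})^2\wedge P^{n-2}=0$ and $c_2(\sF_i/\sF_{i-1})\wedge P^{n-2}=c_2((\sF_i/\sF_{i-1})^{**})\wedge P^{n-2}=0$, then invokes Bando--Siu for projective flatness, the signature-$(1,m)$ Hodge index argument for proportionality to $c_1(-K_X)$, and the effectivity of $c_2(\sE)-c_2(\sE^{**})$ for regularity off codimension~$3$. The only (harmless) refinements beyond the paper's own argument are the explicit upgrade of $c_1(-K_X)^2\wedge P^{n-2}=0$ to $c_1(-K_X)^2=0$ via pseudo-effectivity and the remark on the $\nd(-K_X)=0$ case.
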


\begin{remark}\label{GRR}
We remind that for a vector bundle $V$ of rank $k$ supported on a subvariety $j: Z\subset X$ of codimension $r$,
by the Grothendieck-Riemann-Roch theorem, we have
$$c_{r}(j_* (V))=(-1)^{r-1} (r-1)! k [Z] .$$
Therefore for any torsion free sheaf $\sE$, we have $c_{2}(\sE)\geq c_{2}(\sE^{**})$ and the equality holds if and only if $\sE=\sE^{**}$
outside a subvariety of codimension at least $3$.
\end{remark}

\begin{proof}

By the proof of Proposition \ref{thmcalcul},  the equality
\begin{equation}\label{euqlitychernclass}
\int_X c_2 (T_X)\wedge(c_1 (-K_X) +\epsilon\omega_X )^{n-2}=0
\end{equation}
implies that the filtration \eqref{filtrationlemma} is in the case $(3)$, i.e., 
$\nd (-K_X)=1$.
By the proof of Proposition \ref{thmcalcul},
\eqref{euqlitychernclass} implies also that the filtration \eqref{filtrationlemma} 
satisfies the following three conditions:

\begin{equation}\label{equationdd1}
\int_X c_1(-K_X)^{2}\wedge(c_1 (-K_X) +\epsilon\omega_X )^{n-2}=0.
\end{equation}

\begin{equation}\label{equationdd2}
 \int_X c_{2}((\sF_i /\sF_{i-1})^{**})\wedge(c_1 (-K_X) +\epsilon\omega_X )^{n-2}=\int_X c_2 (\sF_i /\sF_{i-1})\wedge(c_1 (-K_X) +\epsilon\omega_X )^{n-2}=0.
\end{equation}

\begin{equation}\label{equation3}
\int_X c_1 (\sF_i /\sF_{i-1}) ^{2}\wedge(c_1 (-K_X) +\epsilon\omega_X )^{n-2}=0 .
\end{equation}

By \cite[Cor 3]{BS94}, \eqref{equationdd2} and \eqref{equation3} imply that $(\sF_i /\sF_{i-1})^{**}$ 
is locally free and projectively flat.
$(ii)$ is proved.
$(iii)$ follows by \eqref{equationdd2} and the Remark \ref{GRR}.
We now check $(iv)$. 
$(ii)$ implies that $c_1 ( \sF_i /\sF_{i-1} ) =\rank (\sF_i /\sF_{i-1}) \cdot \alpha_{i}$
for some $\alpha_i \in H^{1,1}(X,\mathbb{Z})$. 
By \eqref{equationfirstderiv}, we have
$$\int_X c_1 (-K_X)\wedge c_1 (\sF_i /\sF_{i-1}) \wedge (c_1 (-K_X) +\epsilon\omega_X )^{n-2}= 0 .$$
Combining this with \eqref{equationdd1} and \eqref{equation3}, by the Hodge index theorem
\footnote{In fact, let $Q (\alpha,\beta )=\int_X \alpha\wedge\beta \wedge (c_1 (-K_X) +\epsilon\omega_X )^{n-2}$.
Then $Q$ is of index $(1, m)$. Let $V$ be the subspace of $H^{1,1}(X, \mathbb{R})$ where $Q$ is negative definite.
If $Q (\alpha_1, \alpha_1)=Q (\alpha_2, \alpha_2)=Q (\alpha_1 ,\alpha_2)=0$ for some non trivial $\alpha_1 ,\alpha_2$,
then both $\alpha_1$ and $\alpha_2$ are not contained in $V$.
Therefore we can find a $t\in \mathbb{R}$, such that $(\alpha_1 -t\alpha_2)\in V$.
Since $Q (\alpha_1 -t\alpha_2, \alpha_1 -t\alpha_2)=0$, we get $\alpha_1 -t\alpha_2=0$.
Therefore $\alpha_1$ is proportional to $\alpha_2$. },
we obtain that
$c_1 (\sF_i /\sF_{i-1})=a_i\cdot c_1 (-K_X)$ for some $a_i\in \mathbb{Q}$.
By Theorem \ref{mainthmantinef}, 
we have
$$c_1 (\sF_i /\sF_{i-1}) \wedge (c_1(-K_X)+\epsilon\omega_X)^{n-1}\geq 0 .$$
Therefore $a_i \geq 0$ and $\sF_i /\sF_{i-1}$ is nef.
\end{proof}

Using an idea of A. H\"{o}ring, we finally prove that

\begin{prop}
Let $(X , \omega_X )$ be a projective manifold with nef anticanonical bundle. 
We suppose that $\int_X c_2 (X)\wedge(c_1 (-K_X) +\epsilon\omega_X )^{n-2}=0$ for some $\epsilon> 0 $ small enough.
Then after a finite \'{e}tale cover,
$X$ is either a torus or a smooth $\PP^1$-fibration over a torus.
\end{prop}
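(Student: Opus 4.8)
The plan is to promote the numerical and flatness conclusions of Lemma \ref{calucation} to the statement that $T_X$ is nef, and then to invoke the Demailly--Peternell--Schneider structure theory. First a reduction: by the proof of Proposition \ref{thmcalcul}, Cases (1) and (2) there give strict inequality for all small $\epsilon>0$, so the hypothesis $\int_X c_2(T_X)\wedge(c_1(-K_X)+\epsilon\omega_X)^{n-2}=0$ is incompatible with $\nd(-K_X)\ge 2$. If $\nd(-K_X)=0$, then $-K_X$ is numerically trivial and the hypothesis reads $\int_X c_2(T_X)\wedge\omega_X^{n-2}=0$; by Yau's theorem $X$ carries a Ricci-flat Kähler metric, for which $c_2(T_X)\wedge\omega_X^{n-2}$ is the integral of a non-negative density proportional to the squared curvature norm, so the metric is flat and hence, by Bieberbach's theorem, $X$ has a finite étale cover which is a complex torus. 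From now on $\nd(-K_X)=1$, so that Lemma \ref{calucation} applies.

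Next I would show that $T_X$ is nef. By Lemma \ref{calucation} there is a filtration $0=\sF_0\subset\cdots\subset\sF_l=T_X$ whose graded pieces satisfy: $(\sF_i/\sF_{i-1})^{**}$ is a projectively flat vector bundle, and $c_1(\sF_i/\sF_{i-1})$ is a non-negative multiple of the nef class $c_1(-K_X)$, hence is nef. A projectively flat bundle $V$ with nef determinant is nef: if $i\Theta_h(V)=\alpha\cdot\Id$ with $[\alpha]$ nef (proportional to $c_1(\det V)$), then for each $\epsilon>0$ the $\partial\bar\partial$-lemma lets us replace $h$ by a conformal multiple so that $i\Theta(V)\ge-\epsilon\,\omega\cdot\Id$ pointwise, whence the induced metric on $\sO_{\PP(V^*)}(1)$ has curvature $\ge-C\epsilon\,\omega'$, so $\sO_{\PP(V^*)}(1)$, and therefore $V$, is nef. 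Thus each $(\sF_i/\sF_{i-1})^{**}$ is nef. Regularizing the filtration by a modification $\pi\colon\widetilde X\to X$ as in Lemma \ref{lemmaregular} and keeping track of the Chern classes along the exceptional divisors, the graded pieces of the induced filtration of $\pi^*T_X$ remain nef vector bundles; since an extension of nef bundles is nef (cf.\ \cite[Prop.\ 1.15]{DPS94}), $\pi^*T_X$ is nef, and nefness descends along the proper birational morphism $\pi$, so $T_X$ is nef.

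With $T_X$ nef I would conclude by the structure theorem of Demailly--Peternell--Schneider for projective manifolds with nef tangent bundle: some finite étale cover $X'$ of $X$ admits a smooth morphism $\psi\colon X'\to B$ onto an abelian variety $B=\Alb(X')$ whose fibres are Fano manifolds with nef tangent bundle. Then $-K_{X'}=-K_{X'/B}$ (as $K_B=0$) is nef with $\nd=1$, while on a fibre $F$ one has $-K_F=-K_{X'}|_F$ ample because $F$ is Fano; since the numerical dimension cannot increase upon restriction to the general member of a covering family, $\nd(-K_F)\le 1$, forcing $\dim F\le 1$. The case $\dim F=0$ would give $-K_{X'}\equiv 0$, contradicting $\nd(-K_X)=1$; hence $\dim F=1$ and $F\cong\PP^1$. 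Therefore $X'\to B$ is a smooth $\PP^1$-fibration over the abelian variety $B$, and $X$ is, after a finite étale cover, a smooth $\PP^1$-fibration over a torus.

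The main obstacle is the middle step: verifying that $T_X$ is nef. The conclusions of Lemma \ref{calucation} hold only for the \emph{reflexive hulls} of the graded pieces and the filtration is merely regular in codimension two, so the regularizing modification $\pi$ must be handled with care --- in particular one must check that the exceptional corrections to the Chern classes (and the possibly degenerate relation ``$c_1(-K_X)^2=0$ in $H^4$'') do not destroy nefness of the graded pieces on $\widetilde X$, so that the extension argument and the descent of nefness go through. Granting this, the remaining steps are the cited structure theory together with the numerical constraint $\nd(-K_X)=1$.
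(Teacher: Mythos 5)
Your route is genuinely different from the paper's (which runs a Mori--contraction analysis: Ando's theorem, the blow-up/conic-bundle dichotomy, Miyaoka's formula $\varphi_*(K_X^2)=-4K_Y$, and finally \cite[Thm 1.3]{CH13} to upgrade the conic bundle to a smooth $\PP^1$-fibration), and your endgame --- the Demailly--Peternell--Schneider structure theorem plus the constraint $\nd(-K_X)=1$ forcing one-dimensional Fano fibres --- would indeed work \emph{if} the middle step held. But that middle step, the nefness of $T_X$, is exactly where the proof is missing, and you acknowledge as much. The difficulty is not a technicality one can ``grant''. Lemma \ref{calucation} only gives: (a) projective flatness of the reflexive hulls $(\sF_i/\sF_{i-1})^{**}$, (b) nefness of $c_1(\sF_i/\sF_{i-1})$, and (c) regularity of the filtration outside a set $Z$ of codimension at least $3$. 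Nefness of $T_X$ is a condition tested on all curves, including those meeting $Z$, and there is no general principle that a torsion-free filtration whose graded pieces have nef, projectively flat reflexive hulls forces the ambient bundle to be nef. After the regularization $\pi:\widetilde X\to X$ of Lemma \ref{lemmaregular}, the bundle-theoretic graded pieces $F_i/F_{i-1}$ agree with $\pi^*(\sF_i/\sF_{i-1})$ only in codimension one; their first Chern classes pick up exceptional components, they are no longer projectively flat, and their determinants are in general not nef (they meet curves inside the exceptional divisors negatively). So the extension argument on $\widetilde X$ does not even start, and there is nothing to descend.

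In fact the scenario your argument must exclude is precisely the one the paper works hardest to rule out: when the index is $1$, the Mori contraction could a priori be a blow-up along a smooth codimension-two centre, in which case $T_X$ restricted to a general exceptional fibre $\PP^1$ contains the summand $[E]|_{\PP^1}\cong\sO_{\PP^1}(-1)$ and $T_X$ is certainly not nef --- even though all the conclusions of Lemma \ref{calucation} still hold for that $X$. The paper eliminates this case by picking a general fibre over the centre that avoids $Z$ (possible exactly because $\codim Z\geq 3$) and comparing the splitting type forced by $c_1(\sF_i/\sF_{i-1})=0$ for $i>1$ with the negative normal-bundle summand; some concrete contradiction of this kind is unavoidable, and asserting nefness of $T_X$ from the filtration data alone assumes away the hardest part of the proof. (Your separate treatment of $\nd(-K_X)=0$ via Yau and Bieberbach, and the DPS endgame once $T_X$ is actually known to be nef, are both sound.)
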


\begin{proof}

Denote by $m \in \N$ the index of $-K_X$, that is $m$ is the largest positive integer such that there exists a Cartier divisor $L$ with $mL \equiv -K_X$. 
Let 
\begin{equation}\label{filtration}
0 = \sF_0 \subset \sF_1 \subset \ldots \subset \sF_l = T_X
\end{equation}
be the Jordan-H\"older filtration with respect to $\omega_X^{n-1}$.
Let $Z$ be the locus where \eqref{filtration} is not regular.
By Lemma \ref{calucation}, we have $\codim Z\geq 3$. 
\smallskip

{\em 1st case. $m \geq 2$}. Since $K_X$ is not nef, there exists a Mori contraction $\holom{\varphi}{X}{Y}$. Since $nd(-K_X)=1$ and $-K_X$ is ample on all the $\varphi$-fibres,
we see that all the $\varphi$-fibres have dimension at most one. By Ando's theorem \cite{And85} we know that $\varphi$ is either a blow-up along a smooth subvariety of codimension two
or a conic bundle. Since $m \geq 2$ we see that the contraction has length at least two, so $\varphi$ is a conic bundle without singular fibres, i.e. a $\PP^1$-bundle.
By \cite[4.11]{Miy83} we have
$$
\varphi_* (K_X^2) = - 4 K_Y,
$$
so $K_X^2=0$ implies that $K_Y \equiv 0$. By the condition 
$$\int_X c_2 (X)\wedge(c_1 (-K_X) +\epsilon\omega_X )^{n-2}=0 ,$$
we obtain that $c_2 (Y)=0$. 
Therefore $Y$ is a torus and the proposition is proved.
\smallskip

{\em 2nd case. $m = 1$}

By $(iv)$ of Lemma \ref{calucation}, the condition $m=1$ implies that $\rank \sF_1 = 1$ and $\mu (\sF_i/ \sF_{i-1})=0$ for $i> 1$.
By the proof of Proposition \ref{propequivalent}, we get
\begin{equation}\label{equality1}
c_1 (\sF_i/ \sF_{i-1})=0 \qquad\text{ for } i> 1.
\end{equation}
We consider a Mori contraction: 
$$\varphi: X\rightarrow Y .$$ 
By \cite{And85},
there are two cases:
\smallskip

{\em Case (1):} $\varphi$ is a blow-up along a smooth subvariety $S\subset Y$ of codimension two. 

Let $E$ be the exceptional divisor. 
Since \eqref{filtration} is free outside $Z$ of codimension $\geq 3$, 
for a general fiber over $s\in S$, 
\eqref{filtration} is regular on the fiber $X_s$ over $s$, which is $\PP^1$.
By \eqref{equality1}, we know that
$$T_{X}|_{\PP^1}=\mathcal{O}_{\PP^1}(a)\oplus \mathcal{O}_{\PP^1} ^{n-1},$$
for some $a> 0$.
On the other hand, over $\PP^1$, we have a direct decomposition
$$T_X|_{\PP^1}= T_E |_{\PP^1} \oplus N_{X/E} |_{\PP^1}=T_E |_{\PP^1}\oplus [E] |_{\PP^1}.$$
Since $[-E]|_{\PP^1}$ is strictly positive,
$T_X|_{\PP^1}$ must contain a strictly negative part. 
We get a contradiction.
\smallskip

{\em Case (2):} $\varphi$ is a conic bundle, and $Y$ is smooth.

We consider the reflexive subsheaf $(T_{X/Y})^{**}$ of $T_X$.
We first prove that 
\begin{equation}\label{relativetangent}
(T_{X/Y})^{**} = \sF_1 .
\end{equation}
By \eqref{equality1}, we have
\begin{equation}\label{equality2}
c_1 (\sF_1)=c_1 (-K_X) .
\end{equation}
Let $y$ be a generic point in $Y\setminus \pi(Z)$ 
(i.e., $\varphi_y=\PP^1$ and \eqref{filtration} is regular on $\varphi_y$). 
Since \eqref{filtration} is free over $\varphi_y$, by \eqref{equality1} and \eqref{equality2}
we obtain that $\sF_1= ( T_{X/Y} )^{**}$ over $\varphi_y$.
Since both $\sF_1$ and $( T_{X/Y} )^{**}$ are immersed as vector subbundles in 
$T_X$ outside a subvariety of codimension at least $3$,
combining with \eqref{relativetangent},
we obtain that $\sF_1= (T_{X/Y})^{**}$ outside this subvariety. 
Then the reflexiveness of $\sF_1$ and $(T_{X/Y})^{**}$ 
implies that $\sF_1= (T_{X/Y})^{**}$ on $X$.

We now prove that $T_X/ \sF_1 = \varphi^{*} (T_Y)$ outside a subvariety of codimension at least $3$.
Let $\widetilde{Z}\subset Y$ be the locus where the fiber is non reduced.
By \cite[Thm 3.1]{And85}, for any $y\in \widetilde{Z}$, we have $\varphi_y=2 C$, 
where $C\simeq\PP^1$ and $N_{C/X}$ is not trivial. 
Then $C\cap Z\neq \emptyset$. 
Recall that $Z$ is the singular set of the filtration \eqref{filtration} of codimension at least $3$.
Therefore the codimension $ \widetilde{Z}$ in $Y$ is at least $2$.
Therefore $T_X/ \sF_1 = \varphi^{*} (T_Y)$ outside a subvariety of codimension at least $2$.
Since $T_X/ \sF_1$ and $\varphi^{*} (T_Y)$ are locally free outside a subvariety of codimension at least $3$ (thus reflexive on this open set),
we obtain that $T_X/ \sF_1 = \varphi^{*} (T_Y)$ outside a subvariety of codimension at least $3$.

As consequence, we have 
$$\varphi^{*}(c_1(-K_Y))=c_1 (T_X/ \sF_1) =0 ,$$
where the last equality comes from \eqref{equality1}.
Since $\varphi$ is surjective and $X, Y$ are compact K\"{a}hler, we get $c_1 (-K_Y)=0$.
By Beauville's decomposition, after a finite \'{e}tale cover, we can suppose that $Y$ is a direct product
$T\times Y_1$, where $T$ is a torus and $Y_1$ is a product of Calabi-Yau and hyperk\"ahler manifolds.
If $Y_1$ is non trivial, we have $c_2 (Y)> 0$. 
But $c_{2}(T_X/ \sF_1)=c_{2}(T_X/ \sF_1)=c_2 (T_Y)$ by the above argument,  
we get $c_2 (X)> 0$. We get a contradiction.
Therefore $Y$ is a torus. 
By \cite[Thm 1.3]{CH13}, $\varphi$ admits a smooth fibration to $Y$ and the fibers are $\PP^1$.  

\end{proof}

\begin{remark}
In general, if $\int_X c_2 (X)\wedge(c_1 (-K_X) +\epsilon\omega_X )^{n-2}=0$,
We cannot hope that $X$ can be covered by a torus.
In fact, the example \cite[Example 3.3]{DPS94} satisfies the equality $c_2 (X)=0$ and $X$ can not be decomposed as direct product 
of torus with $\PP^1$. Using \cite{DHP08}, we know that $X$ cannot be covered by torus.
Therefore we propose the following conjecture, which is a mild modification of the question of S.-T. Yau:
\end{remark}

\begin{conj}
Let $( X ,\omega_X )$ be a compact K\"{a}hler manifold with nef anticanonical bundle.
Then $\int_X c_2(T_X)\wedge\omega_X ^{n-2}\geq 0$. 
If the equality holds for some K\"{a}hler metric,
then $X$ is either a torus or a smooth $\PP^1$-fibration over a torus. 
\end{conj}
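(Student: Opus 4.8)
The plan is to push the structural information about the Jordan--H\"older filtration of $T_X$ supplied by Lemma~\ref{calucation} through Mori theory, and then to read off the geometry from Ando's classification of extremal contractions of fibre dimension one. Write $\sF_\bullet\colon 0=\sF_0\subset\cdots\subset\sF_l=T_X$ for a stable refinement of the semistable filtration of $T_X$ with respect to $(c_1(-K_X)+\epsilon\omega_X)^{n-1}$, independent of small $\epsilon>0$ by Proposition~\ref{lemmastable}, and let $Z$ be its non-regular locus. If $-K_X\equiv 0$ then Yau's theorem gives a Ricci-flat K\"ahler metric on $X$, and the hypothesis, which now reads $\int_Xc_2(T_X)\wedge\omega_X^{n-2}=0$, forces that metric to be flat, so $X$ is a torus after a finite \'etale cover and we are done. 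Otherwise Lemma~\ref{calucation} applies and gives $\nd(-K_X)=1$, $\codim_XZ\ge 3$, projective flatness (hence local freeness) of each $(\sF_i/\sF_{i-1})^{**}$, $c_2(\sF_i/\sF_{i-1})=0$, and $c_1(\sF_i/\sF_{i-1})$ integral, nef and proportional to $c_1(-K_X)$. Since $\nd(-K_X)=1$ gives $c_1(-K_X)^2\equiv 0$, multiplicativity of Chern classes along the filtration yields the key relation $c_2(T_X)\equiv 0$. Finally $K_X$ is not nef (else $K_X\equiv 0$, against $\nd(-K_X)=1$), so there is a Mori contraction $\varphi\colon X\to Y$; as $-K_X$ is $\varphi$-ample with $c_1(-K_X)^2\equiv 0$, all $\varphi$-fibres are at most curves, so $Y$ is smooth and $\varphi$ is the blow-up of a smooth codimension-two subvariety or a conic bundle \cite{And85}.

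Suppose first that the index $m$ of $-K_X$ is $\ge 2$. For any contracted rational curve $C$ one has $-K_X\cdot C=m(L\cdot C)\ge m\ge 2$, so the contraction has length $\ge 2$; this excludes the blow-up and all degenerate conic fibres, so $\varphi$ is a $\PP^1$-bundle. Miyaoka's formula $\varphi_*(K_X^2)=-4K_Y$ \cite[4.11]{Miy83} together with $K_X^2\equiv 0$ gives $K_Y\equiv 0$; plugging this into $0\to T_{X/Y}\to T_X\to\varphi^*T_Y\to 0$ kills the cross term $c_1(T_{X/Y})\cdot\varphi^*c_1(T_Y)$ and yields $c_2(T_X)\equiv\varphi^*c_2(T_Y)$, so $c_2(T_Y)=0$ by injectivity of $\varphi^*$ on cohomology. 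Then $Y$ is projective with $K_Y\equiv 0$ and $c_2(T_Y)=0$, hence flat by Yau's theorem, so $Y$ is a torus up to finite \'etale cover and $X$ is a $\PP^1$-bundle over a torus.

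Now suppose $m=1$. Writing $c_1(\sF_i/\sF_{i-1})=r_ic_i\,c_1(-K_X)$ with $c_i\in\bZ_{\ge 0}$, summing over $i$ gives $\sum_ir_ic_i=1$; since the slopes are non-increasing along the filtration this forces $\rank\sF_1=1$, $c_1(\sF_1)=c_1(-K_X)$ and $c_1(\sF_i/\sF_{i-1})=0$ for $i\ge 2$. If $\varphi$ were the blow-up of a smooth codimension-two centre $S$ with exceptional divisor $E$, pick a generic ruling $\ell\cong\PP^1$ of $E\to S$; as $\dim\varphi(Z\cap E)\le\dim Z\le n-3<\dim S$, the curve $\ell$ avoids $Z$, so $\sF_1|_\ell$ is a rank-one subbundle of $T_X|_\ell$ of degree $c_1(-K_X)\cdot\ell=-E\cdot\ell=1$; but $T_X|_\ell\cong T_E|_\ell\oplus N_{E/X}|_\ell\cong\mathcal O(2)\oplus\mathcal O^{n-2}\oplus\mathcal O(-1)$ has no such subbundle, since every nonzero map $\mathcal O(1)\to T_X|_\ell$ factors through the $\mathcal O(2)$ summand and so is not a subbundle inclusion, a contradiction. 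Hence $\varphi$ is a conic bundle, necessarily of fibre type, $\dim Y=n-1$.

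The remaining step is the conic bundle $\varphi\colon X\to Y$, and this is where the actual difficulty lies. First one identifies $\sF_1=(T_{X/Y})^{**}$: on a generic fibre $\varphi^{-1}(y)\cong\PP^1$ avoiding $Z$, the composite $\sF_1\to T_X\to\varphi^*T_Y$ vanishes (a map of the degree-$2$ line bundle $\sF_1|_{\varphi^{-1}(y)}$ into the trivial bundle $\mathcal O^{n-1}$), so $\sF_1\subseteq T_{X/Y}$ there and, degrees agreeing, $\sF_1=(T_{X/Y})^{**}$ on $\varphi^{-1}(y)$; both sheaves being reflexive and agreeing off a codimension-$\ge 2$ set, they coincide on $X$. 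Second, by Ando's description of the non-reduced fibres ($\varphi^{-1}(y)=2C$ with $N_{C/X}$ non-trivial, which by the restriction argument above forces $C\cap Z\ne\emptyset$) the non-reduced locus $\widetilde Z\subset Y$ has $\codim_Y\widetilde Z\ge 2$, so $\varphi^{-1}(\widetilde Z)$ and the node locus over the discriminant are both of codimension $\ge 2$ in $X$; off this set $T_X/\sF_1=\varphi^*T_Y$, whence $(T_X/\sF_1)^{**}=\varphi^*T_Y$, and $T_X/\sF_1$ being locally free off $Z$ (codimension $\ge 3$) gives $c_2(T_X/\sF_1)=\varphi^*c_2(T_Y)$. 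Since $c_1(T_X/\sF_1)=c_1(-K_X)-c_1(\sF_1)=0$ and $c_2(T_X)=c_2(T_X/\sF_1)$, we get $\varphi^*c_1(-K_Y)=0$ and $\varphi^*c_2(T_Y)=0$, hence (after a finite \'etale cover, using injectivity of $\varphi^*$) $K_Y\equiv 0$ and $c_2(T_Y)=0$. By the Beauville--Bogomolov decomposition $Y$ is, up to finite \'etale cover, $T\times Y_1$ with $T$ a torus and $Y_1$ a product of Calabi--Yau and hyperk\"ahler manifolds; a non-trivial $Y_1$ would make $\int_Yc_2(T_Y)\wedge\omega_Y^{n-3}>0$ for a suitable product K\"ahler class (none of these factors is flat), contradicting $c_2(T_Y)=0$, so $Y$ is a torus, and then \cite[Thm.~1.3]{CH13} shows $\varphi$ has no singular fibre, i.e. $X$ is a smooth $\PP^1$-fibration over a torus. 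The main obstacle is exactly this last step: the reflexivity-and-codimension bookkeeping that turns the bound $\codim_XZ\ge 3$ into the sheaf identifications $\sF_1=(T_{X/Y})^{**}$ and $(T_X/\sF_1)^{**}=\varphi^*T_Y$ and controls the non-reduced conic fibres; the Mori-theoretic classification and the Yau/Beauville--Bogomolov rigidity for $c_2$-trivial manifolds with numerically trivial canonical class are routine by comparison.
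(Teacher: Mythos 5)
The statement you were asked to prove is stated in the paper as an \emph{open conjecture}, not a theorem: what the paper actually proves (Proposition \ref{thmcalcul} and the proposition following Lemma \ref{calucation}) concerns the polarization $(c_1(-K_X)+\epsilon\omega_X)^{n-2}$, and the remark placed immediately after the conjecture explains that the missing ingredient for the $\omega_X^{n-2}$ version is precisely the generic nefness of $T_X$ with respect to the mixed polarization $(c_1(-K_X),\omega,\dots,\omega)$, which is not known. Your argument silently substitutes one polarization for the other, and this is a genuine gap in two places. First, you never establish the inequality $\int_X c_2(T_X)\wedge\omega_X^{n-2}\geq 0$ outside the case $c_1=0$: the L\"ubke/Hodge-index mechanism of Proposition \ref{thmcalcul} works only because the polarization $(-K_X+\epsilon\omega_X)^{n-2}$ degenerates, as $\epsilon\to 0$, to powers of the nef class $-K_X$, so that the leading asymptotics of every term are governed by $\nd(-K_X)$ and by the telescoping identity $\sum_i c_1(\sF_i/\sF_{i-1})\wedge(-K_X)^{\nd}\wedge\omega^{n-1-\nd}=(-K_X)^{\nd+1}\wedge\omega^{n-1-\nd}=0$; against the fixed class $\omega_X^{n-2}$ the negative contributions $-\tfrac{1}{r_i}\,c_1(\sF_i/\sF_{i-1})^2\wedge\omega_X^{n-2}$ admit no such control. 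Second, in the equality analysis you invoke Lemma \ref{calucation} under the hypothesis $\int_X c_2(T_X)\wedge\omega_X^{n-2}=0$, but the lemma's hypothesis is $\int_X c_2(T_X)\wedge(c_1(-K_X)+\epsilon\omega_X)^{n-2}=0$, and the two vanishings are integrals of the same class against genuinely different $(n-2,n-2)$-classes; neither implies the other. Hence your key deduction $c_2(T_X)\equiv 0$, and everything downstream of it, rests on a lemma whose hypothesis you have not verified, and the argument is circular: you would need $c_2(T_X)\equiv 0$ to pass from one vanishing to the other.

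Beyond this central issue, the conjecture is stated for arbitrary compact K\"ahler $X$, whereas your case analysis (existence of a Mori contraction, Ando's classification) requires $X$ projective --- the paper's corresponding proposition explicitly adds that hypothesis. Once these substitutions are granted, the remainder of your argument (the index dichotomy, the exclusion of the blow-up case, the identifications $\sF_1=(T_{X/Y})^{**}$ and $T_X/\sF_1=\varphi^{*}T_Y$ off a set of codimension at least three, and the Beauville--Bogomolov endgame) is a faithful reproduction of the paper's proof of its Proposition; but that Proposition is a different and strictly weaker statement than the conjecture you set out to prove.
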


\begin{remark}
If one could prove that $T_X$ is generically nef with respect to the polarization 
$( c_1 (-K_X), \omega,\cdots , \omega )$,
using the same argument as in this section, one could prove this conjecture.
\end{remark}

\section{Surjectivity of the Albanese map}

As an application of Theorem \ref{mainthmantinef}, 
we give a new proof of the surjectivity of Albanese map 
when $X$ is a compact Kähler manifold with nef anticanonical bundle.

\begin{prop}\label{surjectivealb}
Let $(X, \omega)$ be a compact Kähler manifold with nef anticanonical bundle. 
Then the Albanese map is surjective, and smooth outside a subvariety of codimension at least 2. 
In particular, the fibers of the Albanese map are connected and reduced in codimension 1.
\end{prop}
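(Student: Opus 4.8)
The plan is to use Theorem \ref{mainthmantinef} to control the behaviour of the Harder-Narasimhan filtration of $T_X$ along the fibers of the Albanese map $\alpha\colon X\to A:=\Alb(X)$. Recall that the differential $d\alpha$ dualizes to the natural map $H^0(X,\Omega^1_X)\otimes\sO_X\to \Omega^1_X$, whose image is a subsheaf of $\Omega^1_X$ generated by the holomorphic $1$-forms; equivalently, the saturation of $\alpha^*\Omega^1_A$ inside $\Omega^1_X$ is a subsheaf $\sG\subset\Omega^1_X$ of rank equal to the generic rank $q'$ of $d\alpha$, and $\alpha$ is surjective precisely when $q'=q:=\dim A$. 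Passing to tangent bundles, $T_X$ contains a quotient sheaf dual to $\sG$, and more usefully the kernel of $d\alpha$ generically — i.e.\ the relative tangent sheaf — sits as a subsheaf $\sK\subset T_X$ of rank $n-q'$. The starting observation is that the quotient line bundle $\det(\Omega^1_X/\,?\,)$ pulled back from $A$ is numerically trivial: since $\Omega^1_A$ is trivial, $\alpha^*\Omega^1_A$ has all Chern classes zero, so the image $\sG$ of $\alpha^*\Omega^1_A$ in $\Omega^1_X$ satisfies $c_1(\sG)\le 0$ in the sense that $\det\sG$ is a subsheaf of a trivial-ish bundle; more precisely $c_1(\sG)\cdot\omega^{n-1}\le 0$.

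First I would set up the contradiction: suppose $\alpha$ is not surjective, so $q'<q$ and the image $\alpha(X)\subset A$ is a proper subvariety. Standard Albanese theory (the universal property, or Ueno's results) then shows $\alpha(X)$ generates a proper subtorus, and after modifying $A$ we may assume $\alpha$ dominates with $q'=\dim\alpha(X)$; the failure of surjectivity is detected by the existence of a holomorphic $1$-form whose associated foliation has no compact leaf issues — concretely, by the fact that $\Omega^1_X$ then has a trivial quotient of rank $q-q'>0$ coming from $1$-forms pulled back from $A$ that vanish on no divisor but whose "$\alpha$-direction" collapses. The cleaner route: the subsheaf $\sG\subset\Omega^1_X$ generated by all of $H^0(X,\Omega^1_X)$ has $c_1(\sG)\cdot\omega^{n-1}\le 0$ and its dual gives a quotient $T_X\twoheadrightarrow \sQ$ (off a small set) with $c_1(\sQ)\cdot\omega^{n-1}\le 0$. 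By Theorem \ref{mainthmantinef} applied to the Harder-Narasimhan filtration of $T_X$ with respect to $\omega^{n-1}$, every graded piece has nonnegative slope, hence every \emph{quotient} of $T_X$ by a saturated subsheaf has nonnegative degree against $\omega^{n-1}$; a blow-up/regularization via Lemma \ref{lemmaregular} makes this precise. Therefore $c_1(\sQ)\cdot\omega^{n-1}\ge 0$, forcing $c_1(\sG)\cdot\omega^{n-1}=0$, and then varying $\omega$ as in the proof of Proposition \ref{propequivalent} (using \cite[Cor.\ 2.3]{Mi87} that the filtration only refines under small perturbation) together with the Hodge index theorem gives $c_1(\sG)=0$ in $H^2(X,\bR)$. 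Since $\Pic^0(X)$ injects into $H^1(X,\sO_X)$ and $\sG$ is generated by $q$-independent global $1$-forms with numerically trivial determinant, one deduces (after the finite étale cover trivializing $\det\sG$) that the $q$ forms are everywhere independent, i.e.\ $d\alpha$ has rank $q$ generically; so $\alpha$ is surjective, contradicting the assumption. This is the main surjectivity conclusion.

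Next I would upgrade "generic submersion" to "smooth in codimension one". Let $W\subset X$ be the non-smooth locus of $\alpha$ and suppose for contradiction $\codim_X W=1$, with $D$ an irreducible component of $W$ of codimension one. On $X\setminus W$ the relative tangent sheaf $\sK=\ker(d\alpha)$ is a subbundle of $T_X$ of rank $n-q$ with $c_1(T_X/\sK)=\alpha^*c_1(T_A)=0$; its saturation $\bar\sK$ in $T_X$ is reflexive, agrees with $T_{X/A}$ off $W$, and the quotient $T_X/\bar\sK$ is torsion-free of rank $q$ with $c_1(T_X/\bar\sK)$ supported on $W$ — in fact effective and nonzero if $\alpha$ genuinely degenerates along the divisor $D$ (the rank of $d\alpha$ drops on $D$, so $\det(d\alpha)$ vanishes there, contributing an effective divisorial class to $c_1(T_X/\bar\sK)$). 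Thus $c_1(T_X/\bar\sK)\cdot\omega^{n-1}>0$ strictly. But now apply Theorem \ref{mainthmantinef} the other way: the \emph{subsheaf} $\bar\sK\subset T_X$ has, after regularization, $c_1(\bar\sK)\cdot\omega^{n-1}\le 0$ would be what we'd want — instead use that $c_1(T_X)=c_1(\bar\sK)+c_1(T_X/\bar\sK)$ and $\alpha^*\Omega^1_A$ has $c_1=0$, combined with semipositivity of all HN-quotient slopes, to force $c_1(T_X/\bar\sK)\cdot\omega^{n-1}=0$, contradicting strict positivity. More carefully, one chooses a polarization for which $T_{X/A}$ (equivalently $\bar\sK$) is the correct piece of the HN filtration — here the argument parallels the $\nd=1$ bookkeeping in Proposition \ref{thmcalcul} — and concludes that any divisorial degeneration is incompatible with $\mu(T_X/\bar\sK)=0$. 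Hence $\codim_X W\ge 2$. Finally, smoothness in codimension one implies each fiber is reduced in codimension one, and connectedness of the fibers is automatic from the Stein factorization of the Albanese map together with the fact that $\alpha_*\sO_X=\sO_A$ (the Albanese map has connected fibers by construction, or one invokes that $A$ is the Albanese torus and any finite quotient would enlarge $H^1$).

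The hard part will be the codimension-one step: controlling the \emph{divisorial} part of $c_1(T_X/T_{X/A})$ and identifying $T_{X/A}$ (or its saturation) as exactly the relevant term of the Harder-Narasimhan filtration so that Theorem \ref{mainthmantinef} applies with the right sign. The surjectivity itself follows fairly directly from "all HN-slopes of $T_X$ are $\ge 0$" plus "$\alpha^*\Omega^1_A$ has vanishing Chern classes" plus Hodge index and triviality of $\Pic^0$; but pinning the degeneration divisor against the slope-zero constraint requires the careful perturbation-and-Hodge-index manipulation, exactly as in the equality analysis of Proposition \ref{thmcalcul}, and that is where the real work lies.
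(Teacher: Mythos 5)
Your strategy is genuinely different from the paper's, and the key input -- Theorem \ref{mainthmantinef} read as ``every torsion-free quotient of $T_X$ has nonnegative slope, equivalently $\mu_{\max}(\Omega^1_X)\le 0$'' -- is the right one; but as written the argument does not close. For orientation: the paper does not argue through $c_1$ of the image sheaf at all. It shows directly that every nonzero $\tau\in H^0(X,\Omega^1_X)$ is nonvanishing outside a fixed analytic set of codimension at least $2$ (the singular locus of the regularized Harder--Narasimhan filtration), by gluing the Hermitian--Einstein metrics on the graded pieces via Lemma \ref{lemmaestimatexten} into metrics $h_\epsilon$ on $T_X$ with almost nonnegative mean curvature and uniformly bounded curvature, and then deriving a contradiction from a zero of $\tau$ via Lelong numbers and Siu's semicontinuity applied to $\frac{i}{2\pi}\partial\overline{\partial}\log\|\tau\|^2_{h_\epsilon^*}$, using that these currents are exact. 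Since $\alpha$ fails to be a submersion at $x$ exactly when some nonzero $1$-form vanishes at $x$, this yields surjectivity and codimension-$2$ smoothness simultaneously. In your version the signs are repeatedly backwards: a quotient of the trivial (polystable, slope $0$) bundle satisfies $\mu(\sG)\ge 0$, not $\le 0$; the inequality $\mu(\sG)\le 0$ is the one coming from Theorem \ref{mainthmantinef} dualized. Similarly, in your codimension-one step the torsion-free quotient $T_X/\overline{\sK}$ injects into $\alpha^{*}T_{\Alb(X)}$, so a divisorial degeneration makes $c_1(T_X/\overline{\sK})$ \emph{anti}-effective and hence $\mu(T_X/\overline{\sK})<0$, which already contradicts Theorem \ref{mainthmantinef} outright; the detour through ``forcing $\mu(T_X/\overline{\sK})=0$'' and the equality analysis of Proposition \ref{thmcalcul} is both unnecessary and, as sketched, not a proof.

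The genuine gap is the passage from ``$c_1(\sG)\cdot\omega^{n-1}=0$'' to ``the $q$ forms are everywhere independent''. A numerical statement about $c_1$ gives no pointwise information by itself, and you conflate generic independence (which is what surjectivity needs) with pointwise independence (which is what smoothness needs); the latter is precisely the content of the proposition and is the part you defer as ``the hard part''. Two concrete statements are missing. First, for surjectivity: the kernel of the evaluation map $H^0(X,\Omega^1_X)\otimes\sO_X\to\Omega^1_X$ is a saturated subsheaf of $\sO_X^{\oplus q}$ of slope $0$, hence of the form $W\otimes\sO_X$ for a linear subspace $W$, hence zero; this needs a (short, but nontrivial) argument and is what rules out your genus-type degeneration $q'<q$. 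Second, for codimension-$2$ smoothness: if the degeneracy locus of the evaluation map contained a divisor, the saturation of $\sG=\image(H^0(X,\Omega^1_X)\otimes\sO_X\to\Omega^1_X)$ inside $\Omega^1_X$ would have first Chern class a nonzero effective divisor class, hence strictly positive slope, contradicting $\mu_{\max}(\Omega^1_X)\le 0$. Neither step appears in your write-up. With these two points supplied, your route does become a correct and rather more elementary alternative to the paper's analytic proof; note also that the Hodge-index and perturbation steps you invoke are then superfluous, since the slope inequalities for a single K\"ahler class already suffice.
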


\begin{proof}
Let 
\begin{equation}\label{filtrationalbanesemap}
0\subset \mathcal{E}_{0}\subset \mathcal{E}_{1}\subset\cdots\subset \mathcal{E}_{s}=T_{X}
\end{equation}
be a filtration of torsion-free subsheaves such that $\mathcal{E}_{i+1}/\mathcal{E}_{i}$
is an $\omega$-stable torsion-free subsheaf of $T_{X}/ \mathcal{E}_{i}$ of maximal slope.

{\em Case 1: \eqref{filtrationalbanesemap} is regular, i.e., all $\mathcal{E}_{i}$ and $\mathcal{E}_{i}/\mathcal{E}_{i-1}$ are locally free} 

In this case, we can prove that the Albanese map is submersive.
Let $\tau\in H^{0}(X, T_{X}^{*})$ be a nontrivial element.
To prove that the Albanese map is submersive, it is sufficient to prove that
$\tau$ is non vanishing everywhere.
Thanks to Theorem \ref{mainthmantinef} 
and the stability condition of $\mathcal{E}_{i}/\mathcal{E}_{i-1}$,
we can find a smooth metric $h_{i}$ on $\mathcal{E}_{i}/\mathcal{E}_{i-1}$
such that 
$$\frac{i \Theta_{h_{i}}(\mathcal{E}_{i}/\mathcal{E}_{i-1})\wedge \omega^{n-1}}{\omega^{n}}
=\lambda_{i}\cdot\Id_{\mathcal{E}_{i}/\mathcal{E}_{i-1}}$$
for some constant $\lambda_{i}\geq 0.$
Thanks to the construction of $\{h_{i}\}$ and Lemma \ref{lemmaestimatexten},
for any $\epsilon> 0$, 
there exists a smooth metric $h_{\epsilon}$ on $T_{X}$, 
such that
\begin{equation}\label{equation66}
\frac{i\Theta_{h_{\epsilon}}(T_{X})\wedge \omega^{n-1}}{\omega^{n}}\geq -\epsilon\cdot\Id_{T_{X}},
\end{equation}
and the matrix valued $(1,1)$-form $i\Theta_{h_{\epsilon}}(T_{X})$ is uniformly bounded.
Let $h_{\epsilon}^{*}$ be the dual metric on $T_{X}^{*}$.
Then the closed $(1,1)$-current 
$$T_{\epsilon}=\frac{i}{2\pi}\partial\overline{\partial}\ln \| \tau\|_{h_{\epsilon}^{*}} ^{2}$$
satisfies
\begin{equation}\label{equation67}
T_{\epsilon}\geq -\frac{\langle i\Theta_{h_{\epsilon}^{*}}(T_{X}^{*})\tau, \tau\rangle_{h_{\epsilon}^{*}}}
{\|\tau\|_{h_{\epsilon}^{*}} ^{2}}.
\end{equation}
Since $-\Theta_{h_{\epsilon}^{*}}(T_{X}^{*})=\leftidx{^t}{\Theta_{h_{\epsilon}}(T_{X})} ,$
\eqref{equation66} and \eqref{equation67} imply a pointwise estimate
\begin{equation}\label{equation68}
T_{\epsilon}\wedge\omega^{n-1}\geq -\epsilon\omega^{n}.
\end{equation}

We suppose by contradiction that $\tau (x)=0$ for some point $x\in X$, 
By Lemma \ref{lemmaestimatexten}, 
$i\Theta_{h_{\epsilon}}(T_{X})$ is uniformly lower bounded.
Therefore, there exists a constant $C$ such that
$T_{\epsilon}+C\omega$ is a positive current for any $\epsilon$.
After replacing by a subsequence, 
we can thus suppose that $T_{\epsilon}$ converge weakly to a current $T$,
and $T+C\omega$ is a positive current.
Since $\tau (x)=0$, we have  
$$\nu (T_{\epsilon}+C\omega, x)\geq 1 \qquad\text{for any }\epsilon,$$
where $\nu (T_{\epsilon}+C\omega, x)$ is the Lelong number of the current $T_{\epsilon}+C\omega$ at $x$.
Using the main theorem in \cite{Siu74},
we obtain that $\nu (T+C\omega, x)\geq 1$.
Therefore there exists a constant $C_{1}> 0$ such that
$$\int_{B_{x}(r)} (T+C\omega)\wedge \omega^{n-1}\geq C_{1}\cdot r^{2n-2}\qquad\text{for }r\text{ small enough,}$$
where $B_{x}(r)$ is the ball of radius $r$ centered at $x$.
Then 
$$\int_{U_{x}} T\wedge \omega^{n-1}> 0$$
for some neighborhood $U_{x}$ of $x$.
Therefore
\begin{equation}\label{equation69}
\lim_{\epsilon\rightarrow 0}\int_{U_{x}} T_{\epsilon}\wedge \omega^{n-1}> 0.
\end{equation}
Combining \eqref{equation68} with \eqref{equation69}, we obtain
$$\lim_{\epsilon\rightarrow 0}\int_{X} T_{\epsilon}\wedge\omega^{n-1}> 0 .$$
We get a contradiction by observing that all $T_{\epsilon}$ are exact forms.

{\em Case 2: General case}

By Lemma \ref{lemmaregular},
there exists a desingularization $\pi: \widetilde{X}\rightarrow X$, 
such that 
$\pi^{*}(T_{X})$ admits a filtration:
$$0\subset E_{1}\subset E_{2}\subset\cdots \subset \pi^{*}(T_{X})$$
satisfying that $E_{i}, E_{i}/ E_{i-1}$ are vector bundles 
and $\pi_{*}(E_{i})=\mathcal{E}_{i}$ on $X\setminus Z$, 
where $Z$ is an analytic subset of codimension at least 2.
Let $\tau\in H^{0}(X, T_{X}^{*})$ be a nontrivial element.
Our aim is to prove that $\tau$ is non vanishing outside $Z$.

Let $x\in \widetilde{X}\setminus \pi^{-1}(Z)$. 
Let $U_x$ be a small neighborhood of $x$ such that $U_x \subset \widetilde{X}\setminus \pi^{-1}(Z)$.
We suppose by contradiction that $\pi^{*}(\tau) (x)=0 $.
By \cite{BS94}, there exists Hermitian-Einstein metrics $h_{\epsilon, i}$ on $E_{i}/E_{i-1}$
with respect to $\pi^{*}\omega+\epsilon\omega_{\widetilde{X}}$,
and $\{ i\Theta_{h_{\epsilon, i}}(E_{i}/E_{i-1})\}_{\epsilon}$ 
is uniformly bounded on $U_x$
\footnote{In fact, \cite{BS94} proved that $h_{\epsilon, i}$ 
and $h^{-1}_{\epsilon, i}$ are $C^{1,\alpha}$-uniform bounded in $U_x$.
Since $U_x$ is in $X\setminus Z$, 
$\omega_{\epsilon}:=\pi^{*}\omega+\epsilon\omega_{\widetilde{X}}$ is uniformly strict positive on $U_{x}$.
By \cite[Chapter I, (14.16)]{Kob87} and Hermitian-Einstein condition, we obtain that
$\Delta_{\omega_{\epsilon}} ( h_{\epsilon,i})_{j,k}$ is uniformly $C^{\alpha}$ bounded on $U_x$,
where $\Delta_{\omega_{\epsilon}}$ is the Laplacian with respect to $\omega_{\epsilon}$ and $(h_{\epsilon,i})_{j,k} :=h_{\epsilon,i}(e_j ,e_k)$
for a fixed base $\{e_k\}$ of $E_i/E_{i-1}$.
The standard elliptic estimates gives the uniform boundedness of $i\Theta_{h_{\epsilon, i}}(E_i/E_{i-1})$ on $U_x$.}.
Combining this with Lemma \ref{lemmaestimatexten}, we can construct a smooth metric $h_{\epsilon}$ on $\pi^{*}(T_{X})$ such that
\begin{equation}\label{equation70}
\frac{i \Theta_{h_{\epsilon}}(\pi^{*}(T_{X}))\wedge (\pi^{*}\omega+\epsilon\omega_{\widetilde{X}})^{n-1}}
{(\pi^{*}\omega+\epsilon\omega_{\widetilde{X}})^{n}}
\geq -2 C\epsilon\cdot\Id_{\pi^{*}(T_{X})} , 
\end{equation}
and $i \Theta_{h_{\epsilon}}(\pi^{*}(T_{X}))$ is uniformly bounded on $U_{x}$.
Let $T_{\epsilon}=\frac{i}{2\pi}\partial\overline{\partial}\ln \|\pi^{*}(\tau)\|^{2}_{h_{\epsilon}^{*}}$.
By the same argument as in Case 1, the uniform boundedness of $i\Theta_{h_{\epsilon}}(\pi^* (T_X))$ in a neighborhood of $x$
implies the existence of a neighborhood $U'_{x}$ of $x$ and a constant $c> 0$, such that
$$\lim_{\epsilon\rightarrow 0}
\int_{U'_{x}}T_{\epsilon}\wedge(\pi^*(\omega )+\epsilon\omega_{\widetilde{X}})^{n-1}\geq c .$$
Combining this with \eqref{equation70}, we get
$$\lim_{\epsilon\rightarrow 0}
\int_{\widetilde{X}}T_{\epsilon}\wedge(\pi^*(\omega)+\epsilon\omega_{\widetilde{X}})^{n-1}\geq c ,$$
which contradicts with the fact that all $T_{\epsilon}$ are exact.
Therefore $\tau$ is non vanishing outside $Z$.
Proposition \ref{surjectivealb} is proved.
\end{proof}

\section{Appendix}\label{bochner}

We would like to give a proof of the implication $(iii)\Rightarrow (ii)$ in Proposition \ref{propequivalent} 
without using \cite[Theorem 0.1]{BM01}.

\begin{proof}
By \cite[Criterion 1.1]{CDP12}, 
to prove the implication, 
it is sufficient to prove that
for some ample line bundle $F$ on $X$, there exists a constant $C_{F}> 0$, 
such that
\begin{equation}\label{equation111}
H^{0}(X, (T_{X}^{*})^{\otimes m}\otimes F^{\otimes k})=0 
\qquad\text{for all }m, k \text{ with }m\geq C_{F}\cdot k.
\end{equation}

Thanks to the condition $(iii)$, 
there exists a K\"{a}hler class $A$,
such that 
$$\mu_{A}(\mathcal{F}_{i}/\mathcal{F}_{i-1})\geq c \qquad\text{for all } i, $$
for some constant $c>0$.
Moreover, for the Harder-Narasimhan filtration of $(T_{X})^{\otimes m}$ with respect to $A$,
$m\cdot c$ is also a lower bound of the minimal slope with respect to the filtration.

We now prove \eqref{equation111} by a basic Bochner technique.
After replacing by a more refined filtration, we can suppose that 
\begin{equation}\label{equation112}
0\subset \mathcal{E}_{0}\subset \mathcal{E}_{1}\subset\cdots\subset \mathcal{E}_{s}=(T_{X})^{\otimes m}
\end{equation}
is a filtration of torsion-free subsheaves such that $\mathcal{E}_{i+1}/\mathcal{E}_{i}$
is an $\omega$-stable torsion-free subsheaf of $T_{X}/\mathcal{E}_{i}$ of maximal slope
for simplicity.
Let $\omega$ be a positive $(1,1)$-form representing $c_{1}(A)$.

If all the quotients of the filtration \eqref{equation112} are free, 
then there exists a Hermitian-Einstein metric on every quotient. 
Since $\mu_{A}(\mathcal{E}_{i}/\mathcal{E}_{i-1})\geq c\cdot m$,
thanks to Lemma \ref{lemmaestimatexten}, 
we can construct a smooth metric $h$ on $(T_{X})^{\otimes m}$, 
such that
\begin{equation}\label{equation113}
\frac{ i\Theta_{h} (T_{X}^{\otimes m})\wedge\omega^{n-1}}{\omega^{n}}\geq \frac{m\cdot c}{2} \Id .
\end{equation}
Let $\tau\in H^{0}(X, (T_{X}^{*})^{\otimes m}\otimes F^{\otimes k})$.
We have
\begin{equation}\label{equation114}
\Delta_{\omega} (\|\tau\|^{2}_{h^{*}})=
\|D_{h}' \tau\|^{2}-\frac{\langle  i\Theta_{h^{*}} ((T_{X}^{*})^{\otimes m}\otimes F^{\otimes k} )\tau, 
\tau\rangle \wedge \omega^{n-1}}{\omega^{n}}. 
\end{equation}
If $m\geq C_{F}\cdot k$ for some constant $C_{F}$ big enough with respect to $c$, 
\eqref{equation113} implies that 
$$\int_{X}\|D_{h}' \tau\|^{2}\omega^{n}-\langle  i\Theta_{h^{*}} ((T_{X}^{*})^{\otimes m}\otimes F^{\otimes k} )\tau, 
\tau\rangle \wedge \omega^{n-1}\geq c_{1}\|\tau\|^{2}_{h^{*}}$$
for some constant $c_{1} > 0$.
Observing moreover that 
$$\int_{X}\Delta_{\omega} (\|\tau\|^{2}_{h^{*}}) \omega^{n}=0 ,$$
then $\tau=0$.

If the quotients $\mathcal{E}_{i}/\mathcal{E}_{i-1}$ of \eqref{equation112} 
are not necessary free,
by Lemma \ref{lemmaregular}, 
we can find a resolution
$\pi: \widetilde{X}\rightarrow X$ such that
there exists a filtration 
$$0\subset E_{1}\subset E_{2}\subset\cdots \subset \pi^{*}(T_{X})$$
where $E_{i}, E_{i}/E_{i-1}$ are vector bundles and 
$$\mu_{\pi^{*}(A)}(E_{i}/E_{i-1})=\mu_{A} (\mathcal{E}_{i}/\mathcal{E}_{i-1})\geq c\cdot m .$$
Thanks to the strict positivity of $c$, 
for $\epsilon$ small enough,
\begin{equation}\label{equation115}
\mu_{\epsilon} (E_{i}/E_{i-1})\geq \frac{c\cdot m}{2}\qquad\text{for any }i,
\end{equation}
where $\mu_{\epsilon}$ is the slope with respect to $\pi^{*}(A)+\epsilon \omega_{\widetilde{X}}$.
Thanks to the remark of Theorem \ref{mainthmantinef}, 
$E_{i}/E_{i-1}$ are also stable for $\pi^{*}(A)+\epsilon \omega_{\widetilde{X}}$ 
when $\epsilon$ small enough.
Therefore there exists a smooth Hermitian-Einstein metric on every quotient $E_{i}/E_{i-1}$.
Using Lemma \ref{lemmaestimatexten}, \eqref{equation115} implies that 
we can thus construct a smooth metric $h_{\epsilon}$ on $\pi^{*}(T_{X})^{\otimes m}$, 
such that
\begin{equation}\label{equation116}
\frac{i\Theta_{h_{\epsilon}} (\pi^{*} T_{X}^{\otimes m})\wedge (\pi^{*}(\omega)+\epsilon \omega_{\widetilde{X}})^{n-1} }
{(\pi^{*}(\omega)+\epsilon \omega_{\widetilde{X}})^{n}}
\geq \frac{m\cdot c}{4} \Id 
\end{equation}
for $\epsilon$ small enough.
Using the same Bochner technique on $\pi^{*}(T_{X})$ 
with respect to $\pi^{*}(A)+\epsilon \omega_{\widetilde{X}}$
as in \eqref{equation113} and \eqref{equation114}, 
we get
$$H^{0}(\widetilde{X}, \pi^{*}((T_{X}^{*})^{\otimes m}\otimes F^{\otimes k} ))=0 
\qquad\text{for all }m, k \text{ with }m\geq C_{F}\cdot k.$$
\eqref{equation111} is thus proved.
\end{proof}

\bibliographystyle{alpha}
\bibliography{biblio}

\def\cprime{$'$}
\begin{thebibliography}{KMM04}

\bibitem[And85]{And85}
Tetsuya Ando.
\newblock On extremal rays of the higher-dimensional varieties.
\newblock {\em Invent. Math.}, 81(2):347--357, 1985.

\bibitem[BM01]{BM01}
Fedor~A. Bogomolov and Michael~L. McQuillan.
\newblock Rational curves on foliated varieties.
\newblock {\em Pr\'epublications de l'IHES}, pages 1--29, February 2001.

\bibitem[BS94]{BS94}
Shigetoshi Bando and Yum-Tong Siu.
\newblock Stable sheaves and {E}instein-{H}ermitian metrics.
\newblock In {\em Geometry and analysis on complex manifolds}, pages 39--50.
  World Sci. Publ., River Edge, NJ, 1994.

\bibitem[CDP12]{CDP12}
F.~{Campana}, J.-P. {Demailly}, and T.~{Peternell}.
\newblock {Rationally connected manifolds and semipositivity of the Ricci
  curvature}.
\newblock {\em ArXiv e-prints}, October 2012.

\bibitem[CH13]{CH13}
J.~{Cao} and A.~{Hoering}.
\newblock {Manifolds with nef anticanonical bundle}.
\newblock {\em ArXiv e-prints}, May 2013.

\bibitem[Dem]{Dem}
Jean-Pierre Demailly.
\newblock {\em Complex analytic and differential geometry}.

\bibitem[Dem12]{Dem12}
Jean-Pierre Demailly.
\newblock {\em Analytic methods in algebraic geometry}, volume~1 of {\em
  Surveys of Modern Mathematics}.
\newblock International Press, Somerville, MA, 2012.

\bibitem[DHP08]{DHP08}
Jean-Pierre Demailly, Jun-Muk Hwang, and Thomas Peternell.
\newblock Compact manifolds covered by a torus.
\newblock {\em J. Geom. Anal.}, 18(2):324--340, 2008.

\bibitem[DP04]{DP04}
Jean-Pierre Demailly and Mihai Paun.
\newblock Numerical characterization of the {K}\"ahler cone of a compact
  {K}\"ahler manifold.
\newblock {\em Ann. of Math. (2)}, 159(3):1247--1274, 2004.

\bibitem[DPS93]{DPS93}
Jean-Pierre Demailly, Thomas Peternell, and Michael Schneider.
\newblock K\"ahler manifolds with numerically effective {R}icci class.
\newblock {\em Compositio Math.}, 89(2):217--240, 1993.

\bibitem[DPS94]{DPS94}
Jean-Pierre Demailly, Thomas Peternell, and Michael Schneider.
\newblock Compact complex manifolds with numerically effective tangent bundles.
\newblock {\em J. Algebraic Geom.}, 3(2):295--345, 1994.

\bibitem[Eno93]{Eno}
Ichiro Enoki.
\newblock Kawamata-{V}iehweg vanishing theorem for compact {K}\"ahler
  manifolds.
\newblock In {\em Einstein metrics and {Y}ang-{M}ills connections ({S}anda,
  1990)}, volume 145 of {\em Lecture Notes in Pure and Appl. Math.}, pages
  59--68. Dekker, New York, 1993.

\bibitem[HN75]{HN}
G.~Harder and M.~S. Narasimhan.
\newblock On the cohomology groups of moduli spaces of vector bundles on
  curves.
\newblock {\em Math. Ann.}, 212:215--248, 1974/75.

\bibitem[Jac10]{Jac10}
Adam Jacob.
\newblock Existence of approximate hermitian-einstein structures on semi-stable
  bundles.
\newblock {\em arXiv preprint}, 1012.1888, 2010.

\bibitem[KMM04]{KMM04}
Sean Keel, Kenji Matsuki, and James McKernan.
\newblock Corrections to: ``{L}og abundance theorem for threefolds'' [{D}uke
  {M}ath. {J}. {\bf 75} (1994), no. 1, 99--119; mr1284817].
\newblock {\em Duke Math. J.}, 122(3):625--630, 2004.

\bibitem[Kob87]{Kob87}
Shoshichi Kobayashi.
\newblock {\em Differential geometry of complex vector bundles}, volume~15 of
  {\em Publications of the Mathematical Society of Japan}.
\newblock Princeton University Press, Princeton, NJ, 1987.
\newblock Kan{\^o} Memorial Lectures, 5.

\bibitem[LTZZ10]{LTZZ10}
Steven Lu, Yuping Tu, Qi~Zhang, and Quan Zheng.
\newblock On semistability of {A}lbanese maps.
\newblock {\em Manuscripta Math.}, 131(3-4):531--535, 2010.

\bibitem[Miy83]{Miy83}
Masayoshi Miyanishi.
\newblock Algebraic methods in the theory of algebraic threefolds.
\newblock In {\em Algebraic varieties and analytic varieties ({T}okyo, 1981)},
  volume~1 of {\em Adv. Stud. Pure Math.}, pages 69--99. North-Holland,
  Amsterdam, 1983.

\bibitem[Miy87]{Mi87}
Yoichi Miyaoka.
\newblock The {C}hern classes and {K}odaira dimension of a minimal variety.
\newblock In {\em Algebraic geometry, Sendai, 1985}, volume~10 of {\em Adv.
  Stud. Pure Math.}, pages 449--476. North-Holland, Amsterdam, 1987.

\bibitem[P{\v a}u12]{Pau12}
Mihai P{\v a}un.
\newblock Relative adjoint transcendental classes and {A}lbanese maps of
  compact {K}{\"a}hler manifolds with nef {R}icci curvature.
\newblock {\em arXiv preprint}, 1209.2195, 2012.

\bibitem[Pet12]{Pet}
Thomas Peternell.
\newblock Varieties with generically nef tangent bundles.
\newblock {\em J. Eur. Math. Soc. (JEMS)}, 14(2):571--603, 2012.

\bibitem[Siu74]{Siu74}
Yum~Tong Siu.
\newblock Analyticity of sets associated to {L}elong numbers and the extension
  of closed positive currents.
\newblock {\em Invent. Math.}, 27:53--156, 1974.

\bibitem[Xie05]{Xie05}
Qihong Xie.
\newblock On pseudo-effectivity of the second {C}hern classes for terminal
  threefolds.
\newblock {\em Asian J. Math.}, 9(1):121--132, 2005.

\bibitem[Zha96]{Zha96}
Qi~Zhang.
\newblock On projective manifolds with nef anticanonical bundles.
\newblock {\em J. Reine Angew. Math.}, 478:57--60, 1996.

\bibitem[Zha05]{Zha05}
Qi~Zhang.
\newblock On projective varieties with nef anticanonical divisors.
\newblock {\em Math. Ann.}, 332(3):697--703, 2005.

\bibitem[Zhe00]{Zhe}
Fangyang Zheng.
\newblock {\em Complex differential geometry}, volume~18 of {\em AMS/IP Studies
  in Advanced Mathematics}.
\newblock American Mathematical Society, Providence, RI, 2000.

\end{thebibliography}

\end{document}